\newcommand{\R}{\mathbb{R}}
\newcommand{\N}{\mathbb{N}}
\newcommand{\norm}[1]{\left\lVert#1\right\rVert}
\newcommand{\SRVT}{\mathcal{R}}
\DeclareMathOperator{\id}{id}
\DeclareMathOperator{\Evol}{Evol}
\DeclareMathOperator{\A}{\Sigma}
\DeclareMathOperator{\Lf}{\mathbf{L}}
\DeclareMathOperator{\ev}{\mathrm{ev}}
\newcommand{\ver}{\mathrm{Vert}}
\newcommand{\hor}{\mathrm{Hor}}
\newcommand{\dd}{\mathrm{d}}
\newcommand{\coloneq}{\colonequals}
\newcommand{\pr}{\mathrm{pr}}
\newcommand{\pt}{\mathrm{pt}}
\newcommand{\AC}{AC}
\newtheorem{proposition}[subsection]{Proposition}
\newtheorem{lem}[subsection]{Lemma}
\newtheorem{thm}[subsection]{Theorem}
\theoremstyle{definition}
\newtheorem{setup} [subsection]{}
\newtheorem{defn} [subsection]{Definition}
\newtheorem{remark} [subsection]{Remark}
\title{Manifolds of absolutely continuous curves and the square root velocity framework}
\author{Alexander Schmeding\footnote{NTNU Trondheim, Email: \href{mailto:alexander.schmeding@math.ntnu.no}{alexander.schmeding@math.ntnu.no}}}
\date{\vspace{-5ex}}
\begin{document}

\maketitle

\begin{abstract}
 A classical result in Riemannian geometry states that the absolutely continuous curves into a (finite-dimensional) Riemannian manifold form an infinite-dimensional manifold. In the present paper this construction and related results are generalised to absolutely continuous curves with values in a strong Riemannian manifolds.
 
As an application we consider extensions of the square root velocity transform (SRVT) framework for shape analysis. Computations in this framework frequently lead to curves which leave the shape space (of smooth curves), and are only contained in a completion. In the vector valued case, this extends the SRVT to a space of absolutely continuous curves. We investigate the situation for shape spaces of manifold valued (absolutely continuous) curves.
\end{abstract}

\textbf{Keywords:} Shape analysis, Square root velocity transform, Infinite-dimensional manifolds, Manifolds of absolutely continuous curves, (Strong) Riemannian manifolds, Lie groups

\medskip

\textbf{MSC2010:} 58D15 (primary); 22E65, 58B10, 58B20 (secondary)

\tableofcontents

\section*{Introduction and statement of results} \addcontentsline{toc}{section}{Introduction and Statement of results}
 Many problems in object and pattern recognition \cite{bauer_overview_2014,bauer14cri,MR3265197,su14sao,srivastava11sao}, computer animation \cite{celledoni15sao} and signal analysis \cite{MR3442194} can be formulated in terms of similarities  of \emph{shapes}. By \emph{shape} we mean an unparametrized curve with values in a vector space or a manifold.
 Hence one studies equivalence classes of regular curves (i.e.\ smooth immersions, embeddings, absolutely continuous functions...) where equivalence is induced by reparametrisation. 
 To compare shapes, one places them in an infinite-dimensional manifold, the shape space. Comparisons in the shape space are then carried out by means of a distance function, usually induced by a Riemannian metric.
 
 There are many choices for a distance on shape spaces (see e.g.\ \cite{bauer_overview_2014} for a survey). We will concentrate on a certain first order Sobolev metric, which is considered in the square root velocity transform (SRVT) framework \cite{srivastava11sao}. This distance is well-suited for applications as it is related to the $L^2$-metric, which is easy to compute. 
 However, numerical computations in the SRVT framework frequently lead to objects which are not contained in shape space (of smooth mappings), but reside in a completion. For vector valued shapes this is the space of all absolutely continuous curves. Hence it is natural to consider the SRVT in this extended setting.
 A detailed analysis of the SRVT in the vector valued case can be found in \cite{bruveris2015}.
 Though originally developed for planar curves, the SRVT framework has since been expanded to (smooth) curves with values in Lie groups and Riemannian manifolds (see e.g.\ \cite{celledoni15sao,su14sao}).In this setting the completion should then be a manifold of absolutely continuous curves onto which the SRVT can be expanded.
 \medskip
 
 The purpose of the present paper is twofold: 
 As a first step, we construct manifolds of absolutely continuous curves with values in a (possibly infinite-dimensional) Riemannian manifold. This is a generalisation of a classical theory which was developed in the context of Riemannian geometry (but seems to be unknown to a larger audience). Many classical results generalise to our more general setting. In particular, we construct bundles of $L^p$-curve which are crucial to the second part of the paper. 
    
 In the second step, we analyse extensions of the SRVT framework on these manifolds. Our aim here is to prove that the SRVT (and its generalisations from the literature) always splits into a smooth part (given by derivation and translation) and a continuous (but non-differential) part (given by rescaling). In this sense the SRVT framework for manifolds generalises the results from \cite{bruveris2015}. 
 Unfortunately, it is so far unclear to the author, whether the extended SRVT on Riemannian manifolds can be inverted on manifold valued absolutely continuous curves (a sketch for a proof is supplied in \ref{setup: inverse SRVT}).
 However, for Lie group valued shape spaces, the situation is much better. Using some recent results from infinite-dimensional Lie theory, we obtain a full extension of the vector valued case.
 In particular, the SRVT on Lie group valued absolutely continuous curves is a homeomorphism which allows us to construct a metric on the completion by pullback of the $L^2$-distance. 
 \medskip
 
 We now discuss our results in greater detail. In this paper we deal with manifold valued absolutely continuous curves. For a Banach space $E$, a curve $\gamma \colon [a,b] \rightarrow E$ is called absolutely continuous (with derivative in $L^p$) if there is $\eta \in L^p([a,b],E)$ such that $\gamma (t) = \gamma (a) + \int_a^t \eta (s) \dd s$ (details on absolutely continuous curves are repeated in Appendix \ref{App: abscont}). Let now $M$ be a Banach manifold and $p\in \N$. Then we let $\AC^p (I,M)$ be the set of all absolutely continuous curves with values in $M$, i.e.\ which are locally in charts absolutely continuous curves (with derivatives in $L^p$). To construct the manifold $\AC^p (I,M)$, we need a \emph{strong Riemannian metric} $G$ on $M$.\footnote{A Riemannian manifold $(M,G)$ is called \emph{strong} if the topology induced by the inner product on each tangent space coincides with the natural topology. Note that this implies that $M$ is modelled on a Hilbert space. Every finite-dimensional Riemannian metric is strong. Infinite-dimensional manifolds can possess \emph{weak} Riemannian metrics, i.e.\ a smoothly varying non-degenerate bilinear form on $TM$, such that the induced topology on the tangent spaces is weaker than the natural topology (e.g.\ see \cite{MR3265197}). Not every Riemannian metric on a Hilbert manifold is strong, see e.g.\ \cite{0809.3104v1}.}. Then our results (Theorem \ref{thm: ACman}) subsume the following.
 \smallskip
 
 \textbf{Theorem A} \emph{Let $(M,G)$ be a strong Riemannian manifold. Then for every $p \in \N$ the absolutely continuous curves $\AC^p (I,M)$ can be made into a Banach manifold modelled on spaces of vector valued absolutely continuous functions. The construction is independent of the Riemannian metric.}\smallskip
 
 We remark that Theorem A is a classical result by Klingenberg and Flaschel (see \cite{FlaKli72,MR1330918}) in the case $p=2$ and $\dim M < \infty$. 
 The proof for the general case follows the strategy outlined in \cite{FlaKli72,MR1330918}.\footnote{Here \cite{FlaKli72} is by far the more detailed account of the construction. Unfortunately, this book seems to be available in German only.}
 However, we have to modify and augment the proof by avoiding compactness and certain bundle theoretic arguments.
 After the necessary ammendments have been made, the classical theory generalises in the wash ti the more general setting. 
 It does not seem possible to generalise Theorem A beyond the realm of strong Riemannian manifolds. 
 The reason for this is, that in the construction one has to use that the topology is finer than the compact open topology. This turns out to be a consequence of the Riemannian metric being strong.
 
 In recent investigations in infinite-dimensional Lie theory (see \cite{hgmeasure16}), Lie groups of absolutely continuous curves with values in a (possibly infinite-dimensional) Lie group were considered.
 It is now straight forward to relate these constructions to the manifold structure from Theorem A:
 Recall that every Hilbert Lie group is a strong Riemannian manifold. Hence we can consider the Banach manifold $\AC^p (I,G)$ for every Hilbert Lie group $G$.
 Then we obtain the following (see Proposition \ref{prop: HB:Lie}).
 \smallskip
 
 \textbf{Proposition B} \emph{Let $G$ be a Hilbert Lie group. Then for every $p \in \N$ the manifold structure of Theorem A turns $\AC^p (I,G)$ with the pointwise operations into a Banach Lie group. This Lie group structure coincides with the one constructed in \cite{hgmeasure16}.}\smallskip
 
 After establishing the general theory for manifolds of absolutely continuous functions, we investigate in Section \ref{sect: SRVT} the square root velocit transform in this settinIn  we begin the investigation of an extension of the square root velocity transform. 
 To put the results into context, let us recall the classical case from \cite{srivastava11sao}: The SRVT framework for vector valued curves.
 
 Let $I = [0,1]$ and define the Preshape space $\mathcal{P} = \text{Imm} (I,\R^n)$ of all smooth immersions. As we wish to identify shapes up to translation we restrict ourselves to the submanifold $\mathcal{P}_* = \{c \in \mathcal{P} \mid f(0)=0\}$.
 To obtain unparametrised curves declare $c_1, c_2 \in \mathcal{P}_*$ to be equivalent, if there is a smooth strictly increasing diffeomorphism $\varphi$ of $I$ with $c_1 = c_2 \circ \varphi$.
 The shape space is then defined as $\mathcal{S} \coloneq \mathcal{P}_*/\sim$.
 One now obtains a metric on $\mathcal{P}_*$ which descends to $\mathcal{S}$.  In the SRVT framework, this distance is induced by the square root velocity transform 
 \begin{displaymath}
  \SRVT \colon \mathcal{P}_* \rightarrow C^\infty  (I,\R^{n} \setminus \{0\}) ,\quad  c \mapsto  \frac{\dot{c}}{\sqrt{\norm{\dot{c}}}}.
 \end{displaymath}
 Observe that the $\SRVT$ consists of two essential steps: Differentiation and Scaling.
 The scaling in the $\SRVT$ is essential to obtain a metric which descents to $\mathcal{S}$. Moreover, it can be shown that the $\SRVT$ is a diffeomorphism and we can pull back the $L^2$-distance to obtain the distance on $\mathcal{P}$:\footnote{The resulting metric is in many cases the geodesic distance of a certain first order Sobolev metric. We omit these details and concentrate on the metric $d$ as this is the object of interest on the completion.}  
 \begin{displaymath}
  d(a,c) \coloneq \norm{\SRVT(a) - \SRVT (c)}_{L^2} = \int_0^1 \norm{\SRVT (a) (r) - \SRVT (c)(r)}\dd r.
 \end{displaymath}
 Now the space $C^\infty (I,\R^n \setminus \{0\})$ (equivalently $\mathcal{P}$) is neither geodesically complete (geodesics will pass through $0$) nor complete as a metric space. Its completion is of course $L^2 (I,\R^n)$, the space of square integrable functions. Similarly the completion of $\mathcal{P}_*$ is the space $\AC^1 (I,\R^n)_* = \{c \in \AC^1(I,\R^n) \text{ with } c(0) =0\}$.
 Also the $\SRVT$ can be extended to the completions. However, due to the scaling, the extended SRVT is no longer a diffeomorphism but only a homeomorphism (see \cite{bruveris2015} for a detailed account).
 Hence we leave the realm of Riemannian geometry and shape analysis is then carried out in the setting of metric spaces. 
 
 In the second part of the paper we study the SRVT now in the setting of manifold valued curves. We are interested in extensions of the SRVT to the metric completions of the smooth shape spaces (which will be manifolds of absolutely continuous functions). Apart from extending the SRVT framework (and clarifying continuity and differentiability properties of the SRVT on these manifolds), we are interested in the question of whether the SRVT induces a homeomorphism for manifold valued spaces.
 In particular, we explain how this setting extends the results on Lie group valued shape spaces achieved in \cite{celledoni15sao}.

 \section{Manifolds of absolutely continuous curves}\label{sect: MFDabs}
 
 In this section we construct the manifold structure on absolutely continuous curves. Basic defintions and facts on $L^p$-spaces and vector valued absolutely continuous functions are recorded in Appendix \ref{App: abscont}.
 Our exposition here is inspired by \cite{hgmeasure16} and \cite{FlaKli72,MR1330918}. 
 We will assume that the reader is familiar with Riemannian manifolds and concepts such as covariant derivatives (see e.g.\ \cite{MR1330918} or \cite{MR1666820}).
 
 \begin{setup}[Conventions]
  In the following $I = [a,b]$ for $a<b$ will denote a non-degenerate and compact interval, $(E,\norm{\cdot})$ a Banach space and $p \in [1,\infty[$.
  Denote by $B_r^{\norm{\cdot}} (x)$ (or by $B_r (x)$ if it is clear which Banach space is meant) the norm ball of radius $r$ around $x$ in $E$.
  Let $M$ be a Banach manifold. We will always assume that the manifolds are Hausdorff manifolds.
 \end{setup}
 
 \begin{defn}
 Define the set $\AC^p ([a,b] ,M)$ of all continuous functions $\gamma \colon [a,b] \rightarrow M$ for which there is a partition $a=t_0 < t_1 < \cdots < t_n =b$ and charts $(U_i,\phi_i)$ of $M$ for $1 \leq i \leq n$ such that 
  \begin{displaymath}
   \gamma ([t_{i-1},t_i]) \subseteq U_i \text{ and } \phi_i \circ \gamma|_{[t_{i-1},t_{i}]} \in \AC^p([t_{i-1}, t_i], \phi_i (U_i))
  \end{displaymath}
  Once this condition is satisfied for one partition of $[a,b]$ and a family of charts, it is automatically satisfied for all suitable partitions and families of charts (cf.\ \cite[Lemma 3.21]{hgmeasure16}).
 \end{defn}
 
 Our aim is to generalise the following result due to W.P.A.\ Klingenberg and collaborators (see e.g.\ \cite{FlaKli72} and cf.\ \cite{MR1330918}): 
 \smallskip
 
 \textbf{Theorem (Flaschel/Klingenberg '72)}\emph{ If $M$ is a finite-dimensional Riemannian manifold, then $\AC^2 ([a,b] ,M)$ becomes a Hilbert manifold}.
 \smallskip
 
 We will generalise it to infinite-dimensional Riemannian manifolds modifying the classical arguments.
 Note however, that in \cite{FlaKli72,MR1330918} it is crucial to assume that $M$ is finite-dimensional since several tools used in the proof are in general only available on finite-dimensional manifolds.
 First we need some model spaces for this manifold which turn out to be spaces of sections with sufficient regularity.
 
   \begin{defn}\label{defn: asec}
  Fix a Banach vector bundle $\pi \colon B \rightarrow M$ and $\gamma \in \AC^p ([a,b],M)$. 
  Now $c \in \AC^p ([a,b],B)$ with $\pi \circ c = \gamma$ is called \emph{(absolutely continuous) section over $\gamma$}.
  
  We will write $\AC_\gamma^p ([a,b],B)$ for the \emph{set of all absolutely continuous sections} over $\gamma$.  
  \end{defn}
 The pointwise operations turn $\AC_\gamma^p ([a,b],B)$ into a vector space. 
 Before we topologize these spaces, we need additional structures on the base manifold.
 
 \begin{setup}[Conventions]
  Assume that $M$ is endowed with a a strong Riemannian metric $G$. 
  Recall that each strong Riemannian metric possess an associated metric spray and a Levi-Civita covariant derivative $\nabla$. 
  We refer to \cite[Chapter VIII]{MR1666820} for a detailed discussion of strong Riemannian metrics (just called Riemannian metrics in loc.cit.) and their properties.
 \end{setup}
 
 The idea is to use parallel transport $P^\gamma_{t_0,t_1} \colon T_{\gamma (t_0)} M \rightarrow T_{\gamma (t_1)} M$ along smooth curves $\gamma \colon I \rightarrow M$. 
 Following \cite[VIII. \S 3]{MR1666820} parallel transport is smooth and on each fibre an isometry.
 Hence 
 $$Q_{\gamma,p,1} \colon \AC_\gamma^p ([a,b],TM) \rightarrow \AC_\gamma^p ([a,b] , T_{c(a)} M) , X \mapsto (t\mapsto P_{t,a}^\gamma \circ X (t))$$
 makes sense and is a bijection which we use to turn the left hand side into a Banach spaces (see Appendix \ref{App: abscont} for details). 
 
 To simplify the notation, we will work from now on only over the intervall $I = [0,1]$.
 Before we can formulate the result on the manifold structure, we need the concept of a smooth local addition. 
 The idea is to construct charts for the manifold of mappings by specifying with a local addition how trivialisations of $M$ ``move smoothly'' along a curve in $\AC^p (I, M)$.
 
 \begin{defn}			\label{def: local addition}
 A smooth map $\A \colon TM \supseteq \Omega \rightarrow M$ defined on an open neighbourhood $\Omega$ of the zero-section in $TM$ is called \emph{local addition} if 
  \begin{enumerate}
   \item $(\pi_{TM},\A) \colon TM \supseteq \Omega \rightarrow M\times M$ induces a diffeomorphism onto an open neighbourhood of the diagonal in $M \times M$,
   \item $\A (0_x) = x, \ \forall x \in M$, where $0_x$ is zero-element in the fibre over $x$. 
  \end{enumerate}
\end{defn}

\begin{remark}
 Local additions exist at least on all strong Riemannian manifolds (i.e.\ one can use the Riemannian exponential map, which is known to be a local diffeomorphism in this case, see \cite{MR1471480}, compare \cite{MR1330918}).
 Further, (possibly infinite-dimensional) Lie groups possess a local addition (cf.\ \cite[42.4]{MR1471480}).
\end{remark}

\begin{remark}\label{rem: loc:add:fibre}
 Note that the definition of a local addition $\A \colon \Omega \rightarrow M$ on a manifold $M$ entails that for every $x \in M$ the map $\A_x \coloneq \A|_{T_x M \cap \Omega} \colon T_x M \cap \Omega \rightarrow M$ is a diffeomorphism onto its open image.
 This follows from the fact that $i_x \colon M \rightarrow \{x\} \times M$ is a diffeomorphism, whence $\text{Im} \A_x = i_x^{-1} \left( (\pi_{TM},\A)(\Omega)\right)$ is open and on the image we have $\A_x^{-1} = (\pi_{TM}, \A)^{-1} \circ i_x$ is smooth.
\end{remark}

\begin{setup}[Canonical charts for $\AC(I,M)$]\label{setup: charts}
 Fix a local addition $\A \colon TM \supseteq \Omega \rightarrow M$ on $M$.
 Consider $f \in C^\infty (I,M) \subseteq \AC^p(I,M)$. 
 Observe that $\AC_f^p (I, \Omega)$ is open in $\AC_f^p(I,TM)$ by \ref{top:sectsp}.
 Then define the subset 
  \begin{displaymath}
   U_f \coloneq \{g\in \AC^p(I,M) \mid (f(t),g(t)) \in (\pi_{TM},\A)(\Omega)), \text{ for all } 0 \leq t \leq 1\}
  \end{displaymath}
 of $\AC^p(I ,M)$ together with a map $\Phi_f \colon U_f \rightarrow \AC_f^p (I, TM) $ given by 
  \begin{displaymath}
   \Phi_f (\gamma) \coloneq (\pi_{TM} ,\A)^{-1} \circ  (f,\gamma). 
  \end{displaymath}
 This mapping is bijective with inverse $\Phi_f^{-1} = (\A)_* \colon \AC_f (I,\Omega) \rightarrow U_f,\ c \mapsto \A \circ c$.
\end{setup}
 
 \begin{thm}\label{thm: ACman}
  Let $(M,G)$ be a strong Riemannian manifold with local addition $\A \colon \Omega \rightarrow M$. Then for every $p\in [1,\infty[$, the atlas 
    \begin{displaymath}
    \left\{ \Phi_f \colon U_f \rightarrow \AC_f^p (I,TM), \gamma \mapsto (\pi_{TM} ,\A)^{-1} \circ (f,\gamma) \right\}_{f \in C^\infty (I,M)} \text{ turns }  \AC^p (I,M)
    \end{displaymath}
  into an infinite-dimensional Banach manifold modelled on spaces of absolutely continuous sections. 
  The topology of $\AC (I,M)$ carries the identification topology with respect to the atlas, which is finer than the compact-open topology, whence Hausdorff and $C^\infty (I,M)$ is a dense subset. 
  Moreover, the construction is independent of the choice of Riemannian structure and of the local addition. 
 \end{thm}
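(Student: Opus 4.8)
The plan is to verify the manifold axioms for the stated atlas, funnelling every smoothness claim through one analytic input: an \emph{$\Omega$-lemma} for absolutely continuous maps, to the effect that for open sets $O\subseteq E$, $O'\subseteq E'$ in Banach spaces and a smooth map $h\colon I\times O\to O'$, the superposition operator $h_*\colon \AC^p(I,O)\to\AC^p(I,O')$, $c\mapsto\bigl(t\mapsto h(t,c(t))\bigr)$, is smooth, together with the bundle version and the properties of the parallel transport trivialisations $Q_{f,p,1}$ recorded in Appendix \ref{App: abscont} and in \cite{hgmeasure16}. Granting this, I first treat the chart changes. By \ref{setup: charts}, each $\Phi_f$ is a bijection of $U_f$ onto the open set $\AC_f^p(I,\Omega)$ inside the Banach space $\AC_f^p(I,TM)$, which carries its structure via $Q_{f,p,1}$ (legitimate since $f$ is smooth, so parallel transport along $f$ is smooth). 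For $f,g\in C^\infty(I,M)$ one has $\Phi_f(U_f\cap U_g)=\{c\in\AC_f^p(I,\Omega):\A\circ c\in U_g\}$; since $c\mapsto\bigl(t\mapsto(g(t),\A(c(t)))\bigr)$ is continuous into $C(I,M\times M)$ with the compact-open topology (the inclusion $\AC_f^p(I,TM)\hookrightarrow C(I,TM)$ being continuous by Appendix \ref{App: abscont}, and $\A$ being continuous) and $I$ is compact, this is an open condition, so $\Phi_f(U_f\cap U_g)$ is open. On it the overlap map is the pointwise formula
\[
 (\Phi_g\circ\Phi_f^{-1})(c)(t)=(\pi_{TM},\A)^{-1}\bigl(g(t),\A(c(t))\bigr),
\]
which, after conjugating by $Q_{f,p,1}$ and $Q_{g,p,1}$, becomes a superposition operator $\xi\mapsto\bigl(t\mapsto\Theta(t,\xi(t))\bigr)$ between open subsets of $\AC^p(I,H)$ ($H$ the model space), the kernel $\Theta$ being assembled from $\A$, $(\pi_{TM},\A)^{-1}$ and the smooth parallel transports, hence smooth; the $\Omega$-lemma then yields smoothness of $\Phi_g\circ\Phi_f^{-1}$, and by symmetry all overlap maps are diffeomorphisms. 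By the standard criterion for gluing an atlas with open overlaps and smooth transition maps, the final topology with respect to the $\Phi_f^{-1}$ makes the $U_f$ open, the $\Phi_f$ homeomorphisms, and defines a candidate Banach manifold structure on $\AC^p(I,M)$.

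Next I would check that the $U_f$ with $f$ smooth cover $\AC^p(I,M)$: given $\gamma$, the image $\gamma(I)$ is compact and $(\pi_{TM},\A)(\Omega)$ is an open neighbourhood of the diagonal, so a partition-and-interpolation argument using the Riemannian distance (finely partition $I$ and geodesically interpolate the values $\gamma(t_i)$) produces $f\in C^\infty(I,M)$ with $(f(t),\gamma(t))\in(\pi_{TM},\A)(\Omega)$ for all $t$, i.e.\ $\gamma\in U_f$; here only the compactness of $I$, not of $M$, is used. For the topology, the inclusion $\AC^p(I,M)\to C(I,M)$ is continuous, since precomposed with any $\Phi_f^{-1}$ it equals $c\mapsto\A\circ c$, which is continuous into $C(I,M)$ with the compact-open topology; hence the manifold topology is finer than the compact-open topology, and as $M$ is Hausdorff so is $C(I,M)$, and therefore so is $\AC^p(I,M)$. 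For density of $C^\infty(I,M)$, note that in a chart $\Phi_f$ the smooth curves of $U_f$ correspond exactly to the smooth sections in $\AC_f^p(I,\Omega)$ (because $\A$, $(\pi_{TM},\A)^{-1}$ and parallel transport along $f$ are smooth), and smooth sections are dense in $\AC_f^p(I,TM)$ (Appendix \ref{App: abscont}); combined with the covering property this gives the density statement.

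Independence of the auxiliary data is again the $\Omega$-lemma. The set $\AC^p(I,M)$ and the domains $U_f$ depend only on $\A$; changing the strong Riemannian metric $G$ alters only the trivialisations $Q_{f,p,1}$, and the transition between the trivialisations for $G$ and for $G'$ is, fibrewise, a continuous linear isomorphism depending smoothly on $t\in I$, hence a superposition operator with smooth kernel and therefore a linear homeomorphism of the two candidate Banach space structures on $\AC_f^p(I,TM)$. Likewise, if $\A'$ is a second local addition, the transition $(\Phi'_g\circ\Phi_f^{-1})(c)(t)=(\pi_{TM},\A')^{-1}(g(t),\A(c(t)))$ is a superposition operator with smooth kernel, hence smooth by the $\Omega$-lemma, so the $\A$- and $\A'$-atlases are compatible and define the same manifold.

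The main obstacle is the $\Omega$-lemma itself. In the $\AC^p$ setting one must control the weak derivative $(h\circ c)'(t)=\partial_1 h(t,c(t))+\partial_2 h(t,c(t))c'(t)$, proving it lies in $L^p(I,E')$ and that $c\mapsto h\circ c$ is smooth with the expected derivatives — this is precisely where the classical finite-dimensional proofs of \cite{FlaKli72,MR1330918}, which rely on compactness of $M$ and on bundle-theoretic constructions unavailable here, have to be replaced; the replacement is the analysis of absolutely continuous mappings and their composition developed in \cite{hgmeasure16}. Once this lemma (and the companion statements about the section spaces $\AC_f^p(I,B)$) is in hand, the remaining verifications above are routine.
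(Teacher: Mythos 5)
Your proposal is correct and follows essentially the same route as the paper: the $\Omega$-lemma you isolate as the single analytic input is precisely Proposition \ref{prop: ACOmega:Rep} (proved via Lemma \ref{lem: ted:ext} and the superposition results of \cite{hgmeasure16}), and your treatment of the covering by the $U_f$, the openness and smoothness of the chart changes after conjugation by $Q_{f,p,1}$, the comparison with the compact-open topology, the density of $C^\infty(I,M)$, and the independence of the auxiliary data matches the paper's Steps 1--5 (the paper uses the quantitative Lemmas \ref{lem: pieces} and \ref{lem: sandwich} where you use compact-open continuity and geodesic interpolation, but these are cosmetic differences). The only point to watch is that the kernel $\Theta$ of the overlap operator is defined on a $t$-dependent open set, so the $\Omega$-lemma must be applied in its fibre-preserving form or after localising to sup-norm balls, exactly as the paper does on the neighbourhoods $B_r^\infty(\Phi_f(\eta))$.
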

 
 \begin{proof}
  We proceed in several steps. Let us first establish that the sets $U_f$ indeed cover $\AC (I,M)$.
  \medskip
  
  \textbf{Step 1:} \emph{Every $\gamma \in \AC (I,M)$ is contained in $U_f$ for some $f \in C^\infty (I,M)$.}
  Consider $\gamma \in \AC^p (I,M)$ and choose a partition $0=t_0 < t_1 < t_2 < \cdots < t_N =1$, $N\in \N$ and manifold chart $\kappa_i \colon U_i \rightarrow V_i \subseteq E$ of $M$ with $\gamma([t_{i-1},t_i]) \subseteq U_i$.
  Due to compactness of $\gamma ([t_{i-1},t_i])$ and Lemma \ref{lem: normcomp}, we can fix some $\varepsilon >0$ and a neighborhood $W_\gamma$ of $\gamma (I)$ such that for every $x \in W_\gamma$ we have $B^{G_x}_\varepsilon (0) \subseteq T_xM \cap \Omega$. 
  Here and in the following, $B^{G_x}_\varepsilon (0)$ will denote the norm ball in $T_xM$ with respect to the norm induced by the Riemannian metric $G$.
  Applying Lemma \ref{lem: pieces} for every piece in the partition, we construct smooth functions $f_i \colon [t_{i-1} , t_i] \rightarrow W_\gamma$ with $f_i (t_{i-1}) = \gamma (t_{i-1}), f_i(t_i) = \gamma (t_i)$ and $\norm{(\pi_{TM}, \A)^{-1} (f_i(t),\gamma(t))}_{G_{f_i(t)}} < \varepsilon$ for all $t \in [t_{i-1},t_i]$.
  Using cut-off functions, we can glue the $f_i$ together and obtain a smooth function $f \colon I \rightarrow W_\gamma$ with $\norm{(\pi_{TM}, \A)^{-1} (f(t),\gamma(t))}_{G_{f(t)}} < \varepsilon, \forall t \in I$.
  Now as $f(I) \subseteq W_\gamma$, we have by construction of $r_f$ the relation $\varepsilon \leq r_f$, whence $\gamma \in U_f$.
  \medskip
  
  \textbf{Step 2:} \emph{For $\eta \in U_f$ there is an open $\eta$-neighborhood $W_\eta^f$ in the compact open topology with $W_\eta^f \subseteq U_f$.}
  Consider $\eta \in U_f$.   
  By compactness of $f(I)$ and $\eta(I)$ there is a finite partition $0=t_0 <t_1 < \ldots t_N =1$ of $I$ together with pairs of charts $(\kappa_i,U_i), (\varphi_i,V_i)$ such that for $I_i \coloneq [t_{i-1},t_i]$ we have $\eta (I_i) \subseteq U_i$ and $f(I_i) \subseteq V_i$.
  Apply now Lemma \ref{lem: sandwich} (with $C=\infty$, cf.\ the statement of the Lemma) to each intervall $I_i$ and the charts. Thus we obtain a constant $s>0$ such that the set 
  $$W_\eta^f \coloneq \left\{g\in \AC^p(I,M) \mid g(I_i) \subseteq U_i \text{ and } \sup_{t \in I_i}\norm{\kappa_i \circ g(t) - \kappa_i \circ \eta(t)} <s\right\}$$
  is contained in $U_f$. Clearly $W_\eta^f$ is open in the compact open topology. 
  \medskip
  
    \textbf{Step 3:} \emph{Change of charts are smooth.}
  Consider $f,g \in C^\infty (I,M)$ with $O := U_\alpha \cap U_\beta \neq \emptyset$. 
  Fix $\eta \in O$.
  Construct $W_\eta^g$ together with the constant $s$ as in Step 2. Refining the partition of $I$ constructed in Step 2, we may assume that also $f (I_i)$ is contained in some chart $(\theta_i,Y_i)$. Now apply Lemma \ref{lem: sandwich} to  the mappings $f, \eta$ for each $I_i$ and the pairs of charts $(\theta_i,Y_i)$ and $(\kappa_i,U_i)$ but set $C\coloneq s$. 
  We deduce that there is $r>0$ such that $\Phi_f^{-1} (B_r^\infty (\Phi_f (\eta))) \subseteq W_\eta^g$. Hence on the open set $B_r^\infty (\Phi_f (\eta))$ the composition $\Phi_g \circ \Phi_{f}^{-1}$ makes sense. 
  Note that this entails that $\Phi_f (O)$ is open (as it is a neighborhood of each of its elements).  
  
  We will now apply Proposition \ref{prop: ACOmega:Rep}. Note that $O \coloneq \bigcup_{t \in I} \{t\} \times B_r^{G_{f(t)}} (\Phi_f (\eta)(t))$ is an open subset of $f^* TM$.
  On $O$ we define a smooth fibre-preserving map via
  \begin{equation}\begin{aligned}
  \tau_{f}^g \colon f^*TM \supseteq O  \rightarrow g^*TM, \quad
					  (t,x)\mapsto (t, (\pi_{TM},\A)^{-1} (g(t),\A (x))).\end{aligned} \label{chchmap}
   \end{equation}
  We then have (up to harmeless identification) $\Phi_g \circ \Phi^{-1}_f = (\tau_f^g)_*$ on $B_r^\infty (\Phi_f (\eta))$. 
  Now $(\tau_f^g)_*$ is smooth by Proposition \ref{prop: ACOmega:Rep} and we conclude that $\Phi_g \circ \Phi^{-1}_f$ is smooth on $B_r^\infty (\Phi_f (\eta))$. 
  As $\eta$ was arbitrary, the change of charts is smooth and reversing the r\^{o}les of $f$ and $g$ the same holds for $\Phi_f \circ \Phi_g^{-1}$.
  \medskip
  
  \textbf{Step 4:} \emph{Properties of the manifold topology of $\AC (I,M)$.}
  Endow $\AC (I,M)$ with the final topology with respect to the parametrisations $\{\Phi_f^{-1} \colon \AC_f (I,\Omega) \rightarrow U_f\}_{f \in C^\infty (I,M)}$. 
  We have to establish the Hausdorff property for this topology. To this end, we argue that it is finer than the compact open topology (which clearly is Hausdorff).
  A typical (sub)-basic neighborhood of $\eta \in \AC^p (I,M)$ is of the form $N(h,K,U) \coloneq \{h \in \AC^p (I,M) \mid h(K) \subseteq U\}$ for some $K \subseteq I$ compact and $U\subseteq M$ open. Choose $f \in C^\infty (I,M)$ with $\eta \in U_f$. Adapting the construction from Step 2, we may assume that the partition of $I$ was chosen such that for $I_i \cap K \neq \emptyset$ the chart $(\kappa_i,U_i)$ satisfies $U_i \subseteq U$. The resulting set $W_\eta^f$ is open in the final topology (using Step 3) and satisfies $W_\eta^f \subseteq N(h,K,U)$,
  Since $\eta$ was arbitrary, we infer that the manifold topology is finer than the compact open topology.
  
  To see that $C^\infty (I,M)$ is a dense subse in the manifold topology, note that it suffices to prove the property locally, i.e. on the chart domains $U_f$ for $f \in C^\infty (I,M)$.
  However, since $\Phi_f^{-1} = \Sigma_* \colon \AC_f (I,TM) \rightarrow \AC (I,U_f)$ takes smooth functions to smooth functions, density of smooth functions follows from Remark \ref{rem:dense}.
  \medskip
  
  \textbf{Step 5:} \emph{The manifold structure is independent of the choice of the Riemannian metric and the choice of local addition.}
  This follows directly from Proposition \ref{prop: postcom:sm} by taking $\psi = \id_M$.
  \end{proof}

  \begin{proposition}\label{prop: postcom:sm}
    Let $\psi \colon M \rightarrow N$ be a smooth map between strong Riemannian manifolds $((M,G),\A_M)$ and $((N,H),\A_N)$ with local additions. 
    Then $$\psi_* \colon \AC^p (I,M) \rightarrow \AC^p(I,N),\quad \gamma \mapsto \psi\circ \gamma$$ is smooth.
  \end{proposition}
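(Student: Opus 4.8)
The plan is to prove smoothness locally, by writing down the local representative of $\psi_*$ in the canonical charts of \ref{setup: charts} on source and target and recognising it as the pushforward of a smooth fibre map, to which Proposition \ref{prop: ACOmega:Rep} applies. This is the same computation as in Step 3 of the proof of Theorem \ref{thm: ACman}, with the change of charts replaced by $\psi$; in fact the case $\psi = \id_M$ (with two a priori different metrics and local additions on $M$) is exactly Step 5 of that proof. Since $\psi$ is smooth, $\psi\circ\gamma\in\AC^p(I,N)$ for every $\gamma\in\AC^p(I,M)$ (reading off the defining condition in charts) and $\psi\circ f\in C^\infty(I,N)$ for every $f\in C^\infty(I,M)$, so $\psi_*$ is well defined and the target chart $\Phi_{\psi\circ f}\colon U_{\psi\circ f}\to\AC_{\psi\circ f}^p(I,TN)$ is available. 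As smoothness is local, it suffices to fix $\gamma_0\in\AC^p(I,M)$, choose $f\in C^\infty(I,M)$ with $\gamma_0\in U_f$ (possible by Step 1 of Theorem \ref{thm: ACman}), and show $\psi_*$ is smooth on a neighbourhood of $\gamma_0$.

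First I would check that $\psi_*$ maps a suitable $\AC^p$-neighbourhood of $\gamma_0$ into $U_{\psi\circ f}$. Write $\A_M\colon\Omega_M\to M$ and $\A_N\colon\Omega_N\to N$ for the local additions. Since $\A_N$ restricts to the identity on the zero section, $(\pi_{TN},\A_N)(\Omega_N)$ is an open neighbourhood of the diagonal of $N\times N$; hence, by continuity of $\psi$, compactness of $f(I)$ and a covering argument as in Lemma \ref{lem: normcomp}, there are a neighbourhood $W$ of $f(I)$ in $M$ and $\delta>0$ such that $(\psi(y),\psi(z))\in(\pi_{TN},\A_N)(\Omega_N)$ whenever $y,z\in W$ lie in a common chart and have chart distance $<\delta$. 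Applying Lemma \ref{lem: sandwich} on the pieces of a fine enough partition of $I$ then yields an open neighbourhood $V\subseteq U_f$ of $\gamma_0$ with $\psi_*(V)\subseteq U_{\psi\circ f}$; after shrinking $V$ we may assume $\Phi_f(V)\subseteq\AC_f^p(I,O')$ for an open neighbourhood $O'$ of the zero section in $f^*TM$ small enough that $\psi(\A_M(x))$ lies in the common chart with $\psi(f(t))$ at distance $<\delta$ for all $(t,x)\in O'$.

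On $O'$ define the smooth fibre-preserving map
\[
  \sigma_f \colon f^*TM \supseteq O' \longrightarrow (\psi\circ f)^*TN, \qquad (t,x)\longmapsto \bigl(t,\,(\pi_{TN},\A_N)^{-1}\bigl(\psi(f(t)),\psi(\A_M(x))\bigr)\bigr),
\]
which is well defined and smooth because $\A_M$, $\psi$ and $(\pi_{TN},\A_N)^{-1}$ are smooth and $O'$ was chosen so that the second argument always lands in $(\pi_{TN},\A_N)(\Omega_N)$. Unwinding the definitions of \ref{setup: charts} (using $\Phi_f^{-1}=(\A_M)_*$ and $\Phi_{\psi\circ f}^{-1}=(\A_N)_*$) one checks that, up to the harmless identification $\AC_{\psi\circ f}^p(I,TN)\cong\AC_{\psi\circ f}^p(I,(\psi\circ f)^*TN)$, one has $\Phi_{\psi\circ f}\circ\psi_*\circ\Phi_f^{-1}=(\sigma_f)_*$ on $\Phi_f(V)$. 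By Proposition \ref{prop: ACOmega:Rep} the pushforward $(\sigma_f)_*$ is smooth, hence $\psi_*$ is smooth on $V$; as $\gamma_0$ was arbitrary, $\psi_*$ is smooth.

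I expect the only real work to be the bookkeeping in the middle paragraph: arranging simultaneously that $\psi$ does not push curves out of the target chart domain $U_{\psi\circ f}$ and that $\sigma_f$ is defined on an honest open neighbourhood of the zero section containing $\Phi_f(V)$, so that Proposition \ref{prop: ACOmega:Rep} genuinely applies. This only recombines the compactness estimates (Lemmas \ref{lem: normcomp} and \ref{lem: sandwich}) already used in the proof of Theorem \ref{thm: ACman}; once it is set up, smoothness is immediate.
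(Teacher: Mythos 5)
Your proposal is correct and follows essentially the same route as the paper: reduce to the canonical charts $\Phi_f$ and $\Phi_{\psi\circ f}$, identify the local representative of $\psi_*$ with the pushforward of a smooth fibre-preserving map, and invoke Proposition \ref{prop: ACOmega:Rep}. The one point to make explicit is that $f$ must be chosen (via Step 1 of Theorem \ref{thm: ACman} with $\varepsilon$ small) uniformly close to $\gamma_0$, not merely with $\gamma_0\in U_f$, so that your $\delta$-estimate applies to $\gamma_0$ itself and $\Phi_f(V)$ lands in the small zero-section neighbourhood $O'$; the paper arranges exactly this in its own Step 1 by constructing $c$ with $c\in\psi_*^{-1}(O_{\psi\circ\gamma})$.
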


  \begin{proof}
   \textbf{Step 0:}\emph{ Continuity of $\psi_*$.} 
   The topology of $\AC^p (I,M)$ is the final topology with respect to the parametrisations $\{\Phi_f^{-1}\}_{f \in C^\infty (I,M)}$.
   Hence it suffices to prove that $\psi_* \circ \Phi^{-1}_f$ is smooth for each $f \in C^\infty (I,M)$ to establish the continuity of $\psi_*$.
   
   Notice however that the map $\psi_* \colon C(I,M) \rightarrow C(I,N)$ is continuous with respect to the compact-open topology on both spaces.
   \medskip
   
   \textbf{Step 1:} \emph{Reduction to special pairs of charts.}
   Let $\gamma \in \AC^p (I,M)$ and consider $\psi \circ \gamma$. 
   An easy modification of the proof of Lemma \ref{lem: pieces} shows that we can construct an open $\psi\circ \gamma$-neighborhood $O_{\psi\circ \gamma} \subseteq C (I,M)$ in the compact open topology such for every smooth $f\in O_{\psi\circ \gamma}$ we have $\psi \circ \gamma \in U_f$. Now $(\psi_*)^{-1}(O_{\psi\circ \gamma})$ is an open neighborhood of $\gamma$ in the compact open topology (by Step 0). Adapting the construction in Step 1 of the proof of Theorem \ref{thm: ACman} we see that we can obtain a smooth map $c \colon I \rightarrow M$ with $\gamma \in U_c$ and $c \in \psi_*^{-1} (O_{\psi \circ \gamma})$.
   Thus $\psi \circ \gamma \in U_{\psi \circ c}$ by definition of $O_{\psi\circ \gamma}$.
   
   Now Step 2 of the proof of Theorem \ref{thm: ACman} asserts that there is a compact open $\gamma $-neighborhood $W_{\psi \circ \gamma}^{\psi\circ c} \subseteq U_{\psi\circ c}$. By Step 0, we see that $O \coloneq \psi_*^{-1}(W_{\psi \circ \gamma}^{\psi\circ c} \cap W_\gamma^c \subseteq U_c$ is an open neighborhood of $\gamma$ on which $\Phi_{\psi \circ c} \circ \psi_* \circ \Phi^{-1}_{c}$ makes sense. Hence it suffices to check that the map is smooth on all such sets for all $c \in C^\infty (I,M)$. 
   \medskip
   
   \textbf{Step 2:} \emph{Smoothness in special pairs of charts.} Let $\gamma \in \AC^p (I,M)$ and consider $c \in C^\infty (I,M)$ and $O$ as in Step 1.
    Let us show that $\Phi_{\psi \circ c} \circ \psi_* \circ \Phi_c^{-1}|_O \colon \AC_c^p (I,TM) \supseteq O \rightarrow \AC_{\psi\circ c} (I,TM)$ is smooth.
   Let $h \in O$ and identify the sections over $c$ and $\psi \circ c$ with sections in the respective pullback bundles. Since $O$ is open in the compact open topology, there is $R>0$ such that the following map makes sense (up to identification) for each $x \in B_R^{F_{c(t)}} (0)$ and $t\in I$:
   \begin{displaymath}
   F_h (t,x)\coloneq (t,(\pi_{TN}, \A_N)^{-1} \circ (\psi \circ c, \psi) (t, \A_M (x+h(t)) 
   \end{displaymath}
   By construction $F_h$ is a smooth fibre-preserving map, whence $(F_h)_*$ is smooth by Proposition \ref{prop: ACOmega:Rep}.
   A quick calculation shows that $\Phi_{\psi \circ c} \circ \psi_* \circ \Phi_c^{-1}(\delta) = (F_h)_* (\delta -h)$ holds for $\delta \in B_R^\infty (h)$.
   As $h$ was arbitrary, this proves that $\Phi_{\psi \circ c} \circ \psi_* \circ \Phi_c^{-1}|_O$ is smooth.
   \end{proof} 
   
   \begin{remark}[]
    For $((M,g)\A_M)$ and $((N,h),\A_N)$ strong Riemannian manifolds with local addition, $\A_M \times \A_N$ is a local addition for the product manifold $M\times N$.
    Hence the usual arguments involving the product structure show that as manifolds $\AC^p(I,M\times N) \cong \AC^p(I,M) \times \AC^p(I,N)$ 
   \end{remark}
 
 \begin{remark}
  For $p=2$ and $M$ finite-dimensional Theorem \ref{thm: ACman} and Proposition \ref{prop: postcom:sm} are classical results due to Klingenberg and Flaschel.
  We refer to \cite{FlaKli72} for more details (or \cite{MR1330918} for a shorter account).
 \end{remark}
  The next results allow us to compare the manifold of absolutely continuous curves to other manifolds which arise from curves with values in $M$.
 \begin{proposition}\label{prop: cinf:AC}
  Let $(M,g)$ be a strong Riemannian manifold and consider $C^\infty (I,M)$ as a smooth manifold with the compact open $C^\infty$-topology (cf.\ \cite[Theorem 10.4]{MR583436}).
  Then the inclusion $\iota_{C^\infty} \colon C^\infty (I,M) \rightarrow \AC^p (I,M)$ is smooth.
  \end{proposition}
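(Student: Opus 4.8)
The plan is to argue chart by chart, using \emph{the same} local addition for $C^\infty (I,M)$ and for $\AC^p (I,M)$. Recall that the manifold structure on $C^\infty (I,M)$ from \cite[Theorem 10.4]{MR583436} can be built from any local addition on $M$, and by Theorem~\ref{thm: ACman} so can the structure on $\AC^p(I,M)$; hence I fix one local addition $\A \colon \Omega \to M$ and use it throughout. For $f \in C^\infty (I,M)$ the corresponding chart domain of $C^\infty (I,M)$ is $U_f^\infty \coloneq \{ g \in C^\infty (I,M) \mid (f(t),g(t)) \in (\pi_{TM},\A)(\Omega) \text{ for all } t \in I \}$, modelled on the space $\Gamma^\infty_f (I,TM)$ of smooth sections over $f$ (the $C^\infty$-analogue of Definition~\ref{defn: asec}), with chart map $g \mapsto (\pi_{TM},\A)^{-1} \circ (f,g)$. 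Comparing with Setup~\ref{setup: charts} one reads off $U_f^\infty = U_f \cap C^\infty (I,M) = \iota_{C^\infty}^{-1}(U_f)$ (in particular the preimages of the covering chart domains of $\AC^p (I,M)$ are open in $C^\infty (I,M)$), and in these charts $\iota_{C^\infty}$ becomes the canonical linear inclusion $\iota_\Gamma \colon \Gamma^\infty_f (I,TM) \hookrightarrow \AC^p_f (I,TM)$, the two chart formulas being literally identical. Since smoothness is a local property, it remains to show that each $\iota_\Gamma$ is smooth.

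As $\iota_\Gamma$ is linear, it suffices to prove it continuous, continuous linear maps between locally convex spaces being smooth. Here I would trivialise both section spaces by parallel transport along the smooth curve $f$, i.e.\ $X \mapsto (t \mapsto P^f_{t,0}(X(t)))$: this identifies $\Gamma^\infty_f (I,TM)$ with $C^\infty (I, T_{f(0)}M)$ (parallel transport along $f$ being smooth), and, by the very definition of the norm on $\AC^p_f (I,TM)$ via the map $Q_{f,p,1}$, it identifies $\AC^p_f (I,TM)$ with the Banach space $\AC^p(I, H)$, where $H \coloneq T_{f(0)}M$ is a Hilbert space. Under these identifications $\iota_\Gamma$ is just the canonical inclusion $C^\infty (I,H) \hookrightarrow \AC^p(I,H)$.

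The last step is the elementary estimate: for $\gamma \in C^\infty (I,H)$ the $\AC^p$-norm of $\gamma$ is dominated by $\sup_{t \in I} \norm{\gamma (t)}_H + \sup_{t \in I} \norm{\gamma'(t)}_H$ (using $I = [0,1]$; for a general interval one only picks up a factor $(b-a)^{1/p}$ in front of the second term), and the right-hand side is a continuous seminorm on $C^\infty (I,H)$ for the compact-open $C^\infty$-topology. Hence $\iota_\Gamma$ is bounded relative to a continuous seminorm, so continuous, so smooth, and together with the first step this gives the claim. I do not expect any serious obstacle here: the only point that requires care is the compatibility of the two atlases, which is arranged by using a common local addition, after which everything reduces to the evident continuity of the inclusion of smooth into absolutely continuous vector-valued functions.
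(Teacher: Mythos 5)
Your proposal is correct and follows essentially the same route as the paper: fix one local addition, observe $U_f^\infty = U_f \cap C^\infty(I,M)$ so that in the paired charts $\iota_{C^\infty}$ becomes the inclusion of section spaces, trivialise both sides by parallel transport along $f$, and reduce to the continuous linear (hence smooth) inclusion $C^\infty(I,T_{f(0)}M) \hookrightarrow \AC^p(I,T_{f(0)}M)$. The only cosmetic difference is that you justify this last continuity by a direct seminorm estimate, whereas the paper invokes the isomorphism $\AC^p \cong E \times L^p$ together with continuity of the derivation operator into $L^p$; both are fine.
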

  
  \begin{proof}
   Recall from \cite[Theorem 10.4]{MR583436} that an atlas for $C^\infty (I,M)$ is given by the maps $\{\Phi_f^{\infty} \colon C^\infty (I,M) \supseteq U_c^\infty \rightarrow C^\infty_f (I,TM), g \mapsto (\pi_{TM},\A)^{-1} \circ (f,g)\}_{f\in C^\infty (I,M)}$, where $C^\infty_c (I,TM)$ denotes the subset of smooth curves in $TM$ over $c$. 
   Following \cite[Remark 42.2]{MR1471480}, the compact-open $C^\infty$-topology is the final topology with respect to the family $\{(\Phi_f^{\infty})^{-1}\}_{f \in C^\infty (I,M)}$ since $I$ is compact.
   As $U_f^\infty = U_f \cap C^\infty (I,M)$ we only need to establish smoothness of $\iota_{C^\infty}$ in the pairs of charts $(\Phi_f^{\infty},\Phi_f)$ for each $f \in C^\infty (I,M)$.
   In these charts, $\iota_{C^\infty}$ is identified with the inclusion $C^\infty_f (I,TM) \rightarrow \AC_f^p (I,TM)$.
   
   Combining \cite[Lemma 1.6.2]{MR1330918} with \cite[III. Proposition 1.3]{MR1666820} parallel transport induces a bundle isomorphism $P^f \colon f^*TM \rightarrow I \times T_{f(0)}M, (t,v) \mapsto (t,P_{0,t}^f)^{-1} (v))$. 
   By Gl\"{o}ckners $\Omega$-Lemma \cite[Corollary F.24]{hg2004c}, the map $(\text{pr}_2 \circ P^f)_*\colon C^\infty_f (I,TM) \rightarrow C^\infty (I,T_{f(0)})$ is a diffeomorphism.  
   Now the diffeomorphsims $(\text{pr}_2 \circ P^f)_*$ and $Q_{c,p,1}$ take $C^\infty_f (I,TM) \rightarrow \AC_f^p (I,TM)$ to the canonical inclusion $C^\infty (I,T_{c(0)}M) \rightarrow \AC^p (I,T_{c(0)})$.
   This inclusion is continuous linear (whence smooth) as \ref{rem: ISO} implies together with the continuity of $C^\infty (I,T_{c(0)}M) \rightarrow C^0 (I,T_{c(0)}M) \rightarrow L^2(I,T_{c(0)}M), \ f\mapsto \dot{f}$ (use \cite[Proposition 2.2]{hgmeasure16} and continuity of the derivation operator, cf.\ \cite{MR3342623}).
  \end{proof} 
 
  As in the finite-dimensional case, (see \cite{FlaKli72}), the Riemannian manifold $(M,G)$ can be identified with an embedded closed submanifold of $\AC^p (I,M)$.
   
  \begin{lem}\label{lem: auxmap}
  	Let $(M,G)$ be a strong Riemannian manifold. Then
  	\begin{enumerate}
  		\item for each $t \in I$ the map $\text{ev}_t \colon \AC^p (I,M) \rightarrow M, c \mapsto c(t)$ is a smooth submersion.
  		\item The mapping \begin{displaymath}
  		\iota_M \colon M \rightarrow \AC^p (I,M) ,\quad  m \mapsto (t \mapsto m)
  		\end{displaymath}
  		is a smooth embedding. Thus $M$ becomes a closed submanifold of $\AC (I,M)$.
  	\end{enumerate}
  \end{lem}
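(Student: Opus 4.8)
\emph{Plan.} I would prove (a) and (b) in that order, with essentially all the work in (a) and (b) then following formally. For (a), the idea is to read $\text{ev}_t$ in the canonical charts of Theorem~\ref{thm: ACman}: fix $c \in \AC^p(I,M)$, pick (as in Step~1 of the proof of Theorem~\ref{thm: ACman}) some $f \in C^\infty(I,M)$ with $c \in U_f$, and on the target use the chart $\A_{f(t)}^{-1}$ around $c(t)$, which is available by Remark~\ref{rem: loc:add:fibre}: indeed $c\in U_f$ forces $(f(t),c(t))\in(\pi_{TM},\A)(\Omega)$, i.e.\ $c(t)\in\operatorname{Im}\A_{f(t)}$, and the same shows $\text{ev}_t(U_f)\subseteq\operatorname{Im}\A_{f(t)}$, so $\A_{f(t)}^{-1}\circ\text{ev}_t\circ\Phi_f^{-1}$ is globally defined on $\AC_f^p(I,\Omega)=\Phi_f(U_f)$. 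Using $\Phi_f^{-1}=\A_*$ together with $\A|_{T_xM\cap\Omega}=\A_x$, a one-line computation gives $\A_{f(t)}^{-1}\circ\text{ev}_t\circ\Phi_f^{-1}(X)=\A_{f(t)}^{-1}(\A(X(t)))=X(t)$; that is, in these charts $\text{ev}_t$ is exactly the point evaluation $\AC_f^p(I,TM)\to T_{f(t)}M$, $X\mapsto X(t)$.

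The task is then reduced to showing that point evaluation on the model space $\AC_f^p(I,TM)$ is a continuous linear split surjection. I would pass through the Banach space identification $Q_{f,p,1}\colon \AC_f^p(I,TM)\to\AC^p(I,T_{f(0)}M)$: under it, evaluation at $t$ becomes $Y\mapsto P^f_{0,t}(Y(t))$, i.e.\ the ordinary point evaluation $\AC^p(I,T_{f(0)}M)\to T_{f(0)}M$ followed by the linear isometry $P^f_{0,t}$. Ordinary point evaluation $c\mapsto c(t)$ on $\AC^p(I,E)$ is continuous linear (the $\AC^p$-norm dominates the supremum norm; cf.\ Appendix~\ref{App: abscont}), is surjective, and splits: the inclusion of $E$ as the constant curves is a continuous linear right inverse, and $\AC^p(I,E)$ is the direct sum of this copy of $E$ with the closed kernel $\{c : c(t)=0\}$. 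Composing with the isomorphisms $Q_{f,p,1}$, $P^f_{0,t}$ and the chart diffeomorphisms shows $\text{ev}_t$ is a (smooth, since continuous linear) submersion near $c$; as $c$ was arbitrary and being a smooth submersion is local, (a) follows.

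For (b), smoothness of $\iota_M$ is again a chart computation: around $m_0\in M$ take $f=\text{const}_{m_0}\in C^\infty(I,M)$; then $\iota_M$ sends a neighbourhood of $m_0$ into $U_f$, and in the charts $\A_{m_0}^{-1}$ (source) and $\Phi_f$ (target) it is identified with the map $v\mapsto \text{const}_v$, i.e.\ the restriction of the continuous linear inclusion $T_{m_0}M\hookrightarrow \AC_f^p(I,TM)\cong\AC^p(I,T_{m_0}M)$ as constant sections. Now $\text{ev}_0\circ\iota_M=\id_M$, so $\iota_M$ is a smooth section of the submersion $\text{ev}_0$ from part (a); recall that a smooth section of a submersion of Banach manifolds is automatically a topological embedding whose image is a split submanifold (in a submersion chart the section becomes a graph map $u\mapsto(u,\sigma(u))$). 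Hence $\iota_M$ is a smooth embedding. Finally, $\iota_M(M)=\{c\in\AC^p(I,M) : \text{ev}_t(c)=\text{ev}_0(c)\text{ for all }t\in I\}=\bigcap_{t\in I}(\text{ev}_t,\text{ev}_0)^{-1}(\Delta_M)$, which is closed because $M$ is Hausdorff (so the diagonal $\Delta_M\subseteq M\times M$ is closed) and the $\text{ev}_t$ are continuous; equivalently, the constant curves already form a closed set in the compact-open topology, which is coarser than the manifold topology by Theorem~\ref{thm: ACman}.

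\emph{Main obstacle.} The only real friction is in (a): matching conventions carefully enough that $\text{ev}_t$ becomes literal point evaluation in the canonical charts, and then invoking the precise functional-analytic facts about $\AC^p$-spaces (continuity of point evaluation, complementedness of its kernel). Everything after that — the chart computation for $\iota_M$, the section-of-a-submersion argument, and the closedness of the constant curves — is routine.
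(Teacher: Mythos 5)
Your proposal is correct and follows essentially the same route as the paper: read $\ev_t$ and $\iota_M$ in the canonical charts, transport to $\AC^p(I,T_{f(0)}M)$ via $Q_{f,p,1}$, split point evaluation by the constant curves, and write $\iota_M(M)=\bigcap_{t\in I}\{\ev_0=\ev_t\}$ for closedness. The only cosmetic difference is in (b), where the paper separately verifies that $\iota_M$ is an immersion (via Gl\"ockner's submersion/immersion criteria) and a topological embedding with continuous inverse $\ev_0|_{\iota_M(M)}$, while you package the same facts as ``a smooth section of the submersion $\ev_0$ is an embedding.''
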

  
  \begin{proof}
  	\begin{enumerate}
  		\item Since $\AC^p (I,M)$ carries the final topology with respect to the canonical charts, $\ev_t$ will be continuous if $\ev_t \circ \Phi^{-1}_c$ is smooth for each $c \in C^\infty (I,M)$.
  		Now the diffeomorphism $\Phi^{-1}_c \circ Q_{c,p,1}^{-1}$ identifies $\ev_t$ with $\tilde{\ev}_t \colon \AC^p (I,T_{c(0)}M) \rightarrow T_{c(0)}M, h \mapsto h(t)$.
  		By Remark \ref{rem: ISO} $\tilde{\ev}_t$ is continuous linear, whence smooth.
  		In particular, its derivative $d\tilde{\ev}_t (\eta,\xi) = \tilde{\ev}_t (\xi)$. Thus $T_{c(0)} M \rightarrow T_{c(0)} M \times L^p (I,T_{c(0)}) \cong \AC^p (I,T_{c(0)}M),\ m \mapsto (m,0)$ is a continuous linear section of $d\tilde{\ev}_t$. Hence $\tilde{\ev}_t$ is a submersion by \cite[Theorem A]{glockner15fos} and we deduce that $\ev_t$ is a smooth submersion.
  		\item  Let $m\in M$ then on the open neighborhood $W_m =  \A(T_mM \cap \Omega)$ of $m$ the map $\Phi_{\iota_M (m)} \circ \iota_M$ makes sense (as $U_{\phi_m}=\{\gamma \in \AC^p (I,M)\mid \gamma (I) \subseteq \A(T_mM \cap \Omega)\}$). After an identification we see that $\Phi_{\iota_M (M)}\iota_M (x) = (\pi_{TM},\A)^{-1} (m,x), 0) \in T_{m} M \times L^p (I, T_m M)$ is clearly smooth. As $\AC^p (I,M)$ carries the identification topology (and $\iota_M (M)$ is covererd by charts of the type $\Phi_{\iota_M (m)}$), this already implies that $\iota_M$ is a smooth map.
  		Further, its derivative (again read off in these charts) is easily seen to be embedding of $T_mM$ onto a complemented subspace. Thus \cite[Theorem H]{glockner15fos} shows that $\iota_M$ is an immersion.
  		
  		Since $\ev_0 (\iota_M (m)) = m$ an inverse for $\iota_M$ is the continuous map $\ev_0|_{\iota_M (M)}$. We deduce that $\iota_M$ is a topological embedding which is also a smooth immersion. Summing up, $\iota_M$ is a smooth embedding. Its image is closed since $\iota_M (M) = \bigcap_{t \in I} \{c \in \AC^p (I,M) \mid \ev_0 (c) = \ev_t (c)\}$.\qedhere
  	\end{enumerate}
  \end{proof}
 
 \section{\texorpdfstring{The tangent and the $L^p$-bundle over $\AC^p (I,M)$}{The tangent and the Lp-bundle}} 
 
 In this section we discuss two canonical bundles over $\AC^p(I,M)$. Again $(M,G)$ will throughout be a strong Riemannian manifold.. 
 First, we identify the tangent manifold $T\AC^p (I,M)$ as a bundle of absolutely continuous functions with values in the tangent manifold.
 As a consequence, the functor $\AC^p (I,\cdot)$ commutes with the tangent functor. 
 In a second step, we then investigate a bundle of $L^p$-functions over $\AC^p(I,M)$.
 We will then see that the derivative $\gamma \mapsto \dot{\gamma}$ induces a differentiale section of the $L^p$-bundle.
 Again for the classical case (i.e.\ $p=2$ and $\dim M < \infty$) these results are due to Klingenberg and Flaschel \cite{FlaKli72,MR1330918}.
 
 To identify the tangent manifold recall first that the tangent manifold of a strong Riemannian manifold admits a canonical Riemannian metric (cf.\ \cite[X \S 4]{MR1666820}).
 
 \begin{setup}[Sasaki metric]
  Let $(M,G)$ be a strong Riemannian manifold and denote its associated connection (see e.g.\ \cite[X \S 4 Lemma 4.4]{MR1666820}) by $K \colon T^2M \rightarrow TM$. 
  Then $T_\xi TM$ splits into a direct sum of the vertical space $T_{\xi v} TM \coloneq \ker T\pi_{TM}|_{T_\xi TM}$ and the horizontal space $T_{\xi h} TM \coloneq \ker K|_{T_\xi TM}$.
  Combining these spaces, we obtain a splitting of $T^2M  = \hor (T^2M) \oplus \ver(T^2M)$ into the horizontal and the vertical subbundle such that 
     \begin{align*}
      T_\xi \pi_{TM} \colon \hor (T^2M)_\xi \rightarrow T_{\pi_{TM}(\xi)} M \text{ and }
      K|_{\ver(T^2M)_\xi} \colon \ver (T^2M)_X \rightarrow T_{\pi_{TM} (X)} M
     \end{align*}
   are isomorphisms (cf.\ \cite[1.5.10 Proposition]{MR1330918}).
   This induces a strong Riemannian metric $G_{Sa}$ on $TM$ by declaring for $\xi \in TM$ that
     \begin{align*}
     G_{Sa, \xi} (v,w) = G_{\pi_{TM}(\xi)}(T_\xi \pi_{TM} (v),T_\xi \pi_{TM}(w)) + G_{\pi_{TM}(\xi)}(K(v),K(w)).
     \end{align*}
  We call this metric the \emph{Sasaki Metric} on $TM$. By definition the horizontal and vertical subspace are orthogonal.
 \end{setup}
 
 Using the Sasaki metric, we obtain for every strong Riemannian manifold $(M,G)$ a Banach manifold $\AC^p (I,TM)$ such that $(\pi_{TM})_* \colon \AC^p (I,TM) \rightarrow \AC^p (I,M)$ is smooth.
 We will now explain how $\AC^p (I,TM)$ becomes a vector bundle over $\AC^p(I,M)$.
 
 \begin{setup}\label{setup: VBcharts}
 Let us denote from now on by $\exp \colon \Omega \rightarrow M$ the Riemannian exponential map of the strong Riemannian manifold $(M,G)$.
 Further, we let $\Omega \subseteq TM$ be chosen such that $\exp$ restricts on $\Omega \cap T_xM$ to a diffeomorphism onto an open set for each $x \in M$.
 Define the map 
 \begin{displaymath}
  \tau \colon TM \oplus TM \supseteq \Omega \times TM \rightarrow TM, \quad (\xi_x, v_x) \mapsto T_{\xi_x} \exp \circ (K|_{T_{\xi_x v}TM})^{-1} (v_x) \quad \text{ for } \xi_x,v_x \in T_xM.
 \end{displaymath}
  Note that as a consequence of the Dombrowski splitting theorem \cite[X, \S 4 Theorem 4.5]{MR1666820} we have $TM \oplus TM \cong \ver (T^2M)$ and this isomorphim is fibre-wise given by $(K|_{T_{\xi_x v}TM})^{-1} (v_x)$.
  Hence $\tau$ is smooth. 
 Observe that for each $\xi_x \in \Omega$ we obtain a linear isomorphism 
 $\tau_{\xi_x} \colon T_{x} M \rightarrow T_{\exp (\xi)} M, v_x \mapsto T_{\xi_x} \exp \circ (K|_{T_{\xi_x v}TM})^{-1} (v_x)$.
 By construction the map $\tau_{\xi_x}$ is the restriction of $T_\xi \exp$ to elements which are only non-trivial in the fibre-direction in $TM$.
 \end{setup}

  \begin{lem}\label{lem: VB:charts}
   Let $c \in C^\infty (I,M)$, then the following map is a smooth diffeomorphism 
    \begin{align*}
     \Psi_c^{-1} \colon \AC^p_c(I, \Omega) \times \AC^p_c (I, TM) &\rightarrow (\pi_{TM})_*^{-1} (U_c) \subseteq \AC^p (I,TM),\\ (\xi, \eta) &\mapsto \left( t \mapsto \tau \circ (\xi(t),\eta(t))\right).
    \end{align*}
  Further, $\{\Phi_c\}_{c\in C^\infty (I,M)}$ is a bundle atlas turning $(\pi_{TM})_* \colon \AC^p (I,TM) \rightarrow \AC^p (I,M)$ into a Banach bundle.
  \end{lem}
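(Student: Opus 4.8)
The statement packages three claims---that $\Psi_c^{-1}$ is a well-defined bijection, that it is a diffeomorphism, and that the family $\{\Psi_c\}_{c\in C^\infty(I,M)}$ is a vector bundle atlas---and the plan is to establish them in that order (we use throughout the exponential map $\exp$ of \ref{setup: VBcharts} as the local addition on $M$). Given $(\xi,\eta)\in\AC^p_c(I,\Omega)\times\AC^p_c(I,TM)$, the curve $t\mapsto(\xi(t),\eta(t))$ is absolutely continuous into the submanifold $\Omega\times_M TM$ of the Whitney sum $TM\oplus TM$ (both components are sections over $c$, so the base points agree), hence $t\mapsto\tau(\xi(t),\eta(t))$ is absolutely continuous into $TM$ since composition with the smooth map $\tau$ preserves absolute continuity (Appendix \ref{App: abscont}); as $\pi_{TM}\circ\tau(\xi(t),\eta(t))=\exp(\xi(t))$ we get $(\pi_{TM})_*\circ\Psi_c^{-1}(\xi,\eta)=\Phi_c^{-1}(\xi)\in U_c$, so the image lies in $(\pi_{TM})_*^{-1}(U_c)$. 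The candidate inverse sends $\zeta\in(\pi_{TM})_*^{-1}(U_c)$ to $\bigl(\xi,\ t\mapsto\tau_{\xi(t)}^{-1}(\zeta(t))\bigr)$ with $\xi\coloneq\Phi_c(\pi_{TM}\circ\zeta)$, where $\tau_{\xi(t)}\colon T_{c(t)}M\to T_{\exp\xi(t)}M$ is the fibrewise linear isomorphism of \ref{setup: VBcharts}; its second component is absolutely continuous because $(\xi_x,w)\mapsto\tau_{\xi_x}^{-1}(w)$ is smooth---it is assembled from $(T\exp)^{-1}$, which is smooth on the relevant open set since $\exp$ restricts to a diffeomorphism on each slice $\Omega\cap T_xM$, together with the Dombrowski isomorphism of \ref{setup: VBcharts}. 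That this assignment and $\Psi_c^{-1}$ are mutually inverse is a pointwise verification.

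To obtain that $\Psi_c^{-1}$ is a diffeomorphism I would prove smoothness of $\Psi_c^{-1}$ and of its inverse separately, reducing both to Proposition \ref{prop: ACOmega:Rep} on pushforwards of fibre-preserving smooth maps between pullback bundles over $I$. Since $TM$ carries the strong Riemannian Sasaki metric $G_{Sa}$ it admits a local addition $\A_{TM}$, and $\AC^p(I,TM)$ (the manifold of Theorem \ref{thm: ACman} for $(TM,G_{Sa})$) is covered by the associated canonical charts $\Phi^{TM}_{\hat{h}}$, $\hat{h}\in C^\infty(I,TM)$. Given $(\xi_0,\eta_0)$, set $\zeta_0\coloneq\Psi_c^{-1}(\xi_0,\eta_0)$ and pick, by Step~1 of the proof of Theorem \ref{thm: ACman} applied to $TM$, some $\hat{h}$ with $\zeta_0\in U^{TM}_{\hat{h}}$. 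Because $\zeta_0(I)$ lies in the neighbourhood of the diagonal determined by $\A_{TM}$ and $\tau$ is continuous, \ref{top:sectsp} together with compactness of $I$ furnishes an open set $O\subseteq c^*TM\oplus c^*TM$ through the image of $(\xi_0,\eta_0)$ on which
\[
\Theta\colon O\to\hat{h}^*T(TM),\qquad(t,\xi_x,v_x)\mapsto\bigl(t,(\pi_{T(TM)},\A_{TM})^{-1}(\hat{h}(t),\tau(\xi_x,v_x))\bigr)
\]
is a well-defined fibre-preserving smooth map, and $\Phi^{TM}_{\hat{h}}\circ\Psi_c^{-1}=(\Theta)_*$ holds on the open neighbourhood of $(\xi_0,\eta_0)$ consisting of the sections running in $O$; Proposition \ref{prop: ACOmega:Rep} then yields smoothness of $\Psi_c^{-1}$ near $(\xi_0,\eta_0)$, hence everywhere. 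Symmetrically, near $\zeta_0\in(\pi_{TM})_*^{-1}(U_c)$ one picks $\hat{h}$ with $\zeta_0\in U^{TM}_{\hat{h}}$, uses $(\Phi^{TM}_{\hat{h}})^{-1}(w)=\A_{TM}\circ w$, and checks that $\Psi_c\circ(\Phi^{TM}_{\hat{h}})^{-1}$ equals $((\Xi)_*,(H)_*)$ for the fibre-preserving smooth maps $\Xi,H$ (defined on suitable open subsets of $\hat{h}^*T(TM)$, with values in $c^*TM$) given by $\Xi(t,w)=(t,(\pi_{TM},\exp)^{-1}(c(t),\pi_{TM}\A_{TM}(w)))$ and $H(t,w)=(t,\tau_{\xi(t)}^{-1}(\A_{TM}(w)))$, $\xi(t)$ being the fibre component of $\Xi(t,w)$; again Proposition \ref{prop: ACOmega:Rep} applies, so $\Psi_c^{-1}$ is a diffeomorphism.

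It remains to see that $\{\Psi_c\}$ is a bundle atlas. By construction $\pr_1\circ\Psi_c=\Phi_c\circ(\pi_{TM})_*$, so the $\Psi_c$ are local trivialisations over the sets $U_c$, which cover $\AC^p(I,M)$ (and $(\pi_{TM})_*$ is onto, the zero-section curve over $g$ mapping to $g$); on the fibre over $g\in U_c$ the map $\Psi_c$ acts by $\zeta\mapsto(t\mapsto\tau_{\Phi_c(g)(t)}^{-1}\zeta(t))$, which is a topological linear isomorphism onto $\AC^p_c(I,TM)$. Consequently each transition $\Psi_{c'}\circ\Psi_c^{-1}$ is smooth (a composition of the diffeomorphisms just obtained) and fibrewise linear, and the standard procedure for assembling a vector bundle from such an atlas (cf.\ \cite[Ch.\ III]{MR1666820}) turns $(\pi_{TM})_*\colon\AC^p(I,TM)\to\AC^p(I,M)$ into a Banach bundle with atlas $\{\Psi_c\}$. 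I expect the second paragraph to be the genuine work: one has to produce $\Theta$, $\Xi$ and $H$ as bona fide fibre-preserving smooth maps between pullback bundles over $I$, the crux being to check that the open sets on which $\A_{TM}^{-1}$ and $\tau_{\xi_x}^{-1}$ are defined contain neighbourhoods uniform in $t$, as required to invoke Proposition \ref{prop: ACOmega:Rep}; once this has been arranged, the smoothness statements are formal.
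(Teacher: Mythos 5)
Your proposal is correct in outline, but it takes a genuinely different route from the paper's proof. The paper does not work with an arbitrary local addition on $TM$ and local charts $\Phi^{TM}_{\hat h}$ around varying $\hat h \in C^\infty(I,TM)$; instead it chooses the specific local addition $T\exp\circ\chi$ on $TM$ ($\chi$ the canonical flip) and exploits the identity $\tau(\xi_x,v_x)=T_{\xi_x}\exp\circ(\chi\circ\chi)\circ(K|_{T_{\xi_x v}TM})^{-1}(v_x)$ to factor $\Psi_c^{-1}$ \emph{globally} as $\Phi_{0_c}^{-1}\circ(\hat\tau)_*$, where $\Phi_{0_c}$ is the single canonical chart centered at the zero section $0_c$ (whose domain contains all of $(\pi_{TM})_*^{-1}(U_c)$) and $\hat\tau(\xi_x,v_x)=\chi\circ(K|_{T_{\xi_x v}TM})^{-1}(v_x)$ is a fibre-preserving map that is a diffeomorphism onto its image. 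One application of Remark \ref{rem: ext:ACProP} then gives smoothness, and invertibility of $\hat\tau$ gives the inverse for free; no uniform-in-$t$ openness estimates are needed because the chart is adapted to $\tau$ from the start. Your version buys generality (any local addition on $TM$ will do) and is perfectly workable, but it pays for it twice: you must run the pushforward argument separately for $\Psi_c^{-1}$ and for $\Psi_c$, and in each case carry out the sandwich-type verification (your ``crux'') that the fibre-preserving maps $\Theta$, $\Xi$, $H$ are defined on open sets of sections -- exactly the estimates the flip trick circumvents. Your mixed-base application of Proposition \ref{prop: ACOmega:Rep} (source over $M$, target over $TM$) is the same mild extension the paper itself invokes via Remark \ref{rem: ext:ACProP}, so that is not an obstacle. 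One further difference worth noting: for the bundle-atlas part the paper computes the transition maps explicitly as $(\Phi_c\circ\Phi_d^{-1},(T_{\text{fib}}\tau_d^c)_*)$, i.e.\ identifies the fibre part as the pushforward of the fibre-derivative of the base change-of-charts map \eqref{chchmap}; you only establish fibrewise linearity abstractly, which suffices for the lemma as stated but not for the later identification of $T\AC^p(I,M)$ with $\AC^p(I,TM)$ and of the $L^p$-bundle, where formula \eqref{bundle trans} is reused.
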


  \begin{proof}
  Taking identifications, $\Psi_c^{-1}$ corresponds to $\tau_* \colon \AC^p_c (I, \Omega \oplus TM) \rightarrow \AC^p(I,TM)$ by \ref{setup: Whitney}.
  Before we prove that $\tau_*$ is smooth, let us discuss charts for $\AC^p (I,TM)$.
  
  As the construction of $\AC^p (I,M)$ did not depend on the choice of local addition, we choose on $M$ the Riemannian exponential map $\exp \colon \Omega \rightarrow M$ as our local addition.
  Note that $T\exp$ is \textbf{not} a local addition on $TM$, since $T\Omega$ is not a neighborhood of the zero section.
  To remedy this, consider the canonical flip $\chi \colon T^2 M \rightarrow T^2 M$ of the double tangent bundle (i.e.\ the unique bundle isomorphism which in local charts is given by $(x,y, a,b) \mapsto (x,a,y,b)$).
  Then $T\exp \circ \chi \colon \chi (T\Omega) \rightarrow TM$ is a local addition for $TM$ (see \cite[Lemma 7.5]{MR3351079}) and we construct the canonical charts $\{\Phi_f\}_{f\in C^\infty (I,TM)}$ on $\AC^p (I,TM)$ with respect to $T\exp \circ \chi$.
  let $0_c \in C^\infty (I,TM)$ be the zero-section over $c$. Then 
  \begin{displaymath}
   \tau (\xi_x, v_x) = T_{\xi_x} \exp  T_{\xi_x} \exp \circ (K|_{T_{\xi_x v}TM})^{-1} (v_x) =  T_{\xi_x} \exp \circ (\chi \circ \chi) \circ (K|_{T_{\xi_x v}TM})^{-1} (v_x).
  \end{displaymath}
  We now obtain a smooth fibre-preserving map 
  $$\hat{\tau} \colon c^* (\Omega \oplus TM) \rightarrow 0_c^* (\chi (T\Omega)) \subseteq 0_c^* (T^2M), (\xi_x,v_x) \mapsto \chi \circ (K|_{T_{\xi_x v}TM})^{-1} (v_x)$$
  which is a diffeomorphism onto its image (as $\chi$ and the identification are invertible).
  Now $\tau_* = \Phi_{0_c}^{-1} \circ (\hat{\tau})_*$ is bijective. Since $\Phi_{0_c}$ is a chart for $\AC^p(I,TM)$ and $(\hat{\tau})_*$ is smooth by Remark \ref{rem: ext:ACProP}, the map $\Psi_c^{-1}$ is a smooth diffeomorphism.
  
  To see that we obtain a bundle atlas, note first that $(\pi_{TM})_* \circ  \Psi_c^{-1} = \Phi_c^{-1}$. Hence the pairs of charts are compatible.
  For $c,d \in C^\infty (I,M)$ we obtain the formula 
    \begin{equation}\label{bundle trans}
     \Psi_c \circ \Psi_d^{-1} = (\Phi_c \circ \Phi_d^{-1} , (T_{\text{fib}}\tau_d^c)_*) \colon \Phi_d^{-1} (U_c \cap U_d) \times \AC^p_d (I,TM) \rightarrow \AC^p_c (I,TM),
    \end{equation}
    where $T_{\text{fib}}\tau_d^c$ is the fibre-derivative of the fibre-preserving map $\tau_c^d$ from \eqref{chchmap}.
  In particular, for each fixed $\eta  \in \Phi_d^{-1} (U_c \cap U_d)$, the map $\Psi_c \circ \Psi_d^{-1} (\eta , \cdot)$ is a linear isomorphism since $T_{\text{fib}}\tau_d^c (\eta(t),\cdot)$ is linear. 
  We conclude that the transition maps of the atlas are continuous isomorphisms on each fibre, whence the mappings $\{\Psi_c \mid c \in C^\infty(I,M) \}$ form a bundle atlas. 
  \end{proof}
 
 Observe that the transition maps of the bundle atlas $\{\Psi_c\}$ coincide with the transition maps of the tangent atlas for $T\AC^p (I,M)$ by \eqref{bundle trans}.
 This will allow us to identify the tangent bundle of $\AC^p (I,M)$.
 However, let us recall from \cite[II. Lemma 3.10]{FlaKli72} the following construction: 
 
 \begin{setup}[Canonical chart centered around an arbitrary maps in $\AC^p (I,M)$]\label{setup: mcharts}
  Since $\exp$ is a local addition, $(\pi_{TM},\exp) \colon \Omega \rightarrow V \times V \subseteq M \times M$ is a diffeomorphism with an open image. 
  Thus Proposition \ref{prop: postcom:sm} shows that $(\pi_{TM},\exp)_* \colon \AC^p (I,\Omega) \rightarrow \AC^p (I,V\times V) \cong \AC^p (I,V) \times \AC^p (I,V)$ is again a diffeomorphism with open image. 
  Arguing as in Remark \ref{rem: loc:add:fibre}, we see that $\Phi_\gamma^{-1} \colon \AC^p_\gamma (I,\Omega) \rightarrow \exp_* (\AC^p_\gamma (I,\Omega)) \subseteq \AC^p (I,M), \eta \mapsto \exp \circ \eta$ is a diffeomorphism with open image.
  Hence we obtain canonical charts centered at $\gamma$ for each $\gamma \in \AC^p (I,M)$ (note that by construction the inverse of $\Phi_\gamma^{-1}$ is a map $\Phi_\gamma$ as constructed in \ref{setup: charts}).
 \end{setup}
 
  Let us denote elements in the tangent bundle $T\AC^p (I,M)$ by equivalence classes $[\Phi_c,X,Y]$ where $\Phi_c$ is a canonical chart, $\Phi_c^{-1} (X)$ is the base point of the tangent space and $Y \in \AC_c^p (I,M)$ (and the equivalence relation is $(\Psi_c, X,Y) \sim (\Psi_g, A,B)$ iff $T(\Psi_c \Psi_g^{-1}) (A,B) = (X,Y)$). 
  With these notations and our preparation, the proof of \cite[II. Satz 3.11]{FlaKli72} carries over almost verbatim to the following result.  
    
    \begin{proposition}
    Let $(M,G)$ be a strong Riemannian manifold.    
    Then $$i_{TM} \colon T \AC^p (I,M) \rightarrow \AC^p (I,TM),\ T_\gamma \AC^p_c (I,M) \ni [\Phi_\gamma ,0_\gamma, X] \mapsto X \in \AC_\gamma^p (I,TM)$$ is a bundle isomorphism.
    Moreover, if $(N,H)$ is a strong Riemannian manifold and $\psi \colon M \rightarrow N$ smooth, the following diagram commutes.
     \begin{displaymath} 
    \begin{xy}
  \xymatrix{
      T\AC^p (I,M) \ar[rd]^{i_{TM}} \ar[dd]_{\pi_{T\AC (I,M)}} \ar[rrrr]^{T(\psi_*)}&&&&  T\AC^p(I,N) \ar[ld]_{i_{TN}} \ar[dd]^{\pi_{T\AC (I,N)}}\\
       & \AC^p (I,TM) \ar[rr]^{(T\psi)_*} \ar[d]^{(\pi_{TM})_*} &  & \AC^p (I,N)  \ar[d]^{(\pi_{TM})_*}  & \\
      \AC^p (I,M) \ar@{=}[r]&  \AC (I,M)  \ar[rr]^{\psi_*} & & \AC^p(I,N) \ar@{=}[r]  &\AC^p(I,N)
  }
\end{xy}   
     \end{displaymath}
  \end{proposition}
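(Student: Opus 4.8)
The plan is to reduce the isomorphism claim to the coincidence of transition functions noted just before the statement, and then to verify the naturality square in canonical charts. First I would record the two vector bundle atlases over $\AC^p (I,M)$ that are in play. On the one hand, $T\AC^p (I,M)$ carries for each $c \in C^\infty(I,M)$ the tangent chart $T\Phi_c$ over $U_c$, whose transition maps over $U_c \cap U_d$ are $(x,v)\mapsto \bigl(\Phi_c\circ\Phi_d^{-1}(x),\, d(\Phi_c\circ\Phi_d^{-1})(x;v)\bigr)$. On the other hand, Lemma \ref{lem: VB:charts} equips $\AC^p (I,TM)$ with the bundle charts $\Psi_c$, and \eqref{bundle trans} identifies their transition maps as $(x,v)\mapsto \bigl(\Phi_c\circ\Phi_d^{-1}(x),\,(T_{\mathrm{fib}}\tau_d^c)_*(x;v)\bigr)$. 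So both bundles are glued over the \emph{same} cover $\{U_c\}$ from the \emph{same} base cocycle $\{\Phi_c\circ\Phi_d^{-1}\}$, and everything comes down to the identity $d(\Phi_c\circ\Phi_d^{-1})(x;v) = (T_{\mathrm{fib}}\tau_d^c)_*(x;v)$ of the fibre cocycles.

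This is the heart of the argument. Recall from Step 3 of the proof of Theorem \ref{thm: ACman} that $\Phi_c\circ\Phi_d^{-1}$ is, locally, the pushforward $(\tau_d^c)_*$ of the smooth fibre-preserving map $\tau_d^c$ of \eqref{chchmap}. Thus the point is the general fact that the derivative of a pushforward operator $(\varphi)_*$ along a fibre-preserving map $\varphi$ is the pushforward $(T_{\mathrm{fib}}\varphi)_*$ of its fibre derivative: differentiating $h\mapsto \varphi\circ h$ at a section $h$ in direction $k$ yields $t\mapsto d_{\mathrm{fib}}\varphi(h(t);k(t))$. This is exactly the chain rule underlying the $\Omega$-lemma-type statements of Section \ref{sect: MFDabs}, and I would derive it from Proposition \ref{prop: ACOmega:Rep} (or Remark \ref{rem: ext:ACProP}) applied to $\varphi$ and to its fibre derivative. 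Granting this, the two fibre cocycles coincide, so the map which in the charts $(T\Phi_c,\Psi_c)$ is the identity is a well-defined vector bundle isomorphism $\widetilde{\imath}\colon T\AC^p (I,M)\to\AC^p (I,TM)$ over $\id_{\AC^p (I,M)}$. To identify $\widetilde{\imath}$ with $i_{TM}$, I would use the canonical chart $\Phi_\gamma$ centred at an arbitrary $\gamma\in\AC^p (I,M)$ from \ref{setup: mcharts}: every element of $T_\gamma\AC^p (I,M)$ then has a representative $[\Phi_\gamma,0_\gamma,X]$ with $X\in\AC_\gamma^p (I,TM)$, and chasing $\widetilde{\imath}$ through the pair $(T\Phi_\gamma,\Psi_\gamma)$ shows $\widetilde{\imath}([\Phi_\gamma,0_\gamma,X])=X$. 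Since the centred chart is canonical once the metric is fixed, this shows $i_{TM}$ is well defined and equals $\widetilde{\imath}$, hence is a bundle isomorphism, fibrewise linearity being automatic from the centred chart.

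For the naturality square, all maps involved are smooth (Theorem \ref{thm: ACman}, Proposition \ref{prop: postcom:sm}, Lemma \ref{lem: VB:charts}, and the previous paragraph), so it suffices to check commutativity in the pairs of canonical charts $(\Phi_c,\Phi_{\psi\circ c})$ for $c\in C^\infty(I,M)$ and their tangent/bundle lifts. Step 2 of the proof of Proposition \ref{prop: postcom:sm} writes $\Phi_{\psi\circ c}\circ\psi_*\circ\Phi_c^{-1}$ locally as $\delta\mapsto (F_h)_*(\delta-h)$ with $F_h$ the fibre-preserving map recalled there, built from $\psi$ and $\exp$. Differentiating and applying the derivative-of-pushforward principle of the previous paragraph once more, $T(\psi_*)$ becomes, after the identifications $i_{TM}$, $i_{TN}$, the pushforward $(T_{\mathrm{fib}}F_h)_*$, and unwinding the definition of $F_h$ (its fibre derivative along the zero section is $T\psi$ conjugated by the maps $\tau$) this is precisely $(T\psi)_*$ read in the same charts, which closes the upper triangles. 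The two lower squares commute because $i_{TM}$ and $i_{TN}$ are bundle maps over the identity and $\pi_{TN}\circ T\psi=\psi\circ\pi_{TM}$, so functoriality of the pushforward gives $(\pi_{TN})_*\circ(T\psi)_*=\psi_*\circ(\pi_{TM})_*$.

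The conceptual content is small — as the paper indicates, one is essentially following \cite[II. Satz 3.11]{FlaKli72} — and I expect the main obstacle to be the bookkeeping in the key computation: one must differentiate the pushforward operators while carrying along the Dombrowski splitting, the canonical flip $\chi$ used in Lemma \ref{lem: VB:charts}, and the several pullback-bundle identifications, so that ``$d(\tau_d^c)_*$'' literally reads as ``$(T_{\mathrm{fib}}\tau_d^c)_*$'' (and similarly for $F_h$). None of this is hard, but it is where all the care is needed.
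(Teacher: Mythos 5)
Your proposal is correct and follows essentially the same route as the paper: both arguments hinge on the observation (recorded just before the proposition via \eqref{bundle trans}) that the fibre cocycles of $T\AC^p(I,M)$ and $\AC^p(I,TM)$ coincide because $d\bigl((\tau_d^c)_*\bigr)=(T_{\mathrm{fib}}\tau_d^c)_*$ by Proposition \ref{prop: ACOmega:Rep}, and both verify the naturality square by differentiating the local representative of $\psi_*$ from Proposition \ref{prop: postcom:sm} and using that $T_{0_x}\exp$ is the identity. The paper phrases the first part as a smoothness check of $\Psi_c\circ i_{TM}\circ T\Phi_c^{-1}$ in charts (deferring details to Flaschel--Klingenberg) rather than as an explicit cocycle comparison, but the content is the same.
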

  
  \begin{proof} 
   By definition of the tangent bundle $i_{TM}$ is a linear isomorphism of each fibre. Hence it suffices to check smoothness in local charts, i.e.\ for smooth $c \in C^\infty (I,M)$ we have to check that $\Psi_c \circ i_{TM} \circ T\Phi_c^{-1} \colon \AC^p_c (I,\Omega) \times \AC^p_c(I,TM) \rightarrow \AC^p_c (I,\Omega) \times \AC^p_c(I,TM)$ is smooth. To see this one argues in a neighborhood of $\{d\} \times \AC^p_c(I,TM) \in \AC^p_c (I,\Omega) \times \AC^p_c(I,TM)$ and uses Theorem \ref{prop: ACOmega:Rep} to establish smoothness of the resulting fibre-preserving maps. 
   We omit the technical computation here, since it is carried out in the proof of \cite[II. Satz 3.11]{FlaKli72} in great detail. 
   Copying the argument (note that the charts $\text{Exp}_{0_c}^{-1}$ and $T\exp_c$ as in loc.cit.\ are in our notation $\Psi_c$ and $ T\Phi_c^{-1}$) from \cite[II. Satz 3.11]{FlaKli72} together with the formula \eqref{bundle trans} yields the desired results.
   
   For the readers convenience we copy the computation from \cite[II. Satz 3.11 (ii)]{FlaKli72} to obtain the desired formula for $T(\psi_*)$.
   Consider $X \in \AC^p_c (I,M)$, then
    \begin{align*}
     i_{TN} \circ T(\psi_*) \circ (i_{TM})^{-1} (X) &\stackrel{\hphantom{\text{Proposition \ref{prop: ACOmega:Rep}}}}{=} d(\Phi_{\psi\circ c } \circ \psi_* \circ \Phi_c^{-1})(0_c ; X) \\
     &\stackrel{\text{Proposition \ref{prop: ACOmega:Rep}}}{=} (T_{\text{fib}} (\pi_{TN},\exp_N)^{-1} \circ (f\circ c, f \circ \exp_M))_* (0_c,X)\\
     &\stackrel{\hphantom{\text{Proposition \ref{prop: ACOmega:Rep}}}}{=}  (Tf)_* X   
    \end{align*}
    where $\exp_M$ (resp.\ $\exp_N$) is the Riemannian exponential map on $M$ (or $N$, resp.) and we have used for the last identification that the derivative of the Riemannian exponential map at the zero section is the identity, i.e.\ $T_{0_x}\exp (y) = y$. 
  \end{proof}

 In the next chapter, we want to consider the square root velocity transform. 
 Since one takes derivatives of curves in this transform, we are interested in the differentiability properties of this operations.
 By definition of absolutely continuous curves, the derivative of an absolutely continuous curve will (in general) not be absolutely continuous. 
 Instead we obtain an $L^p$-lifts of the curve to the tangent bundle. 
 
 \begin{defn}\label{defn:L2sect}
   Let $\pi_B \colon B \rightarrow M$ be a a smooth Hilbert bundle (see e.g.\ \cite[VII, \S 3.]{MR1666820}) and consider $\gamma \in \AC^p (I,M)$. A curve $c \colon I \rightarrow B$ with $\pi_B \circ c = \gamma$ such that for every vector bundle trivialisations $(\kappa, U)$ of $\pi \colon B \rightarrow M$ the map $\text{pr}_2 \circ \kappa \circ c$ is locally of class $L^p$ (i.e.\ for all closed subintervals such that the mapping is defined, its restriction is an $L^p$-function), is called \emph{$L^p$-section} over $\gamma$.
   We write $L^p_\gamma (I,M\leftarrow B)$ for the set of all these mappings.  
  \end{defn}
 
 Note that one can \textbf{not} define $L^p$-functions with values in $M$ in a similar way (to remind the reader of this we chose a somewhat cumbersome notation).
 Again, the pointwise operations turn $L^p_\gamma (I,M\leftarrow B)$ into a Banach space (we shall only need $B = TM$ and $B = TM \oplus TM$ and construct the topology in Appendix \ref{App: abscont}).
 Now taking derivatives of curves in $\AC^p(I,M)$ produces $L^p$-sections over these curves. 
 Moreover, the derivative induces a smooth map into a bundle of $L^p$-functons.
 
 \begin{proposition}
  The set $L^p (I, M\leftarrow TM) \coloneq \bigcup_{\gamma \in \AC^p (I,M)} L^p_\gamma (I,M \leftarrow TM)$ forms a smooth vector bundle $(\pi_{TM})_* \colon L^p (I, M\leftarrow TM) \rightarrow \AC^p(I,M)$ whose trivilasitions are given by 
  \begin{align*}
     \Psi_{c,0}^{-1} \colon \AC^p_c(I, \Omega) \times L^p_c (I,M\leftarrow TM) &\rightarrow (\pi_{TM})_*^{-1} (U_c) \subseteq L^p (I,M \leftarrow TM),\\ (\xi, \eta) &\mapsto \left( t \mapsto \tau \circ (\xi(t),\eta(t))\right)
    \end{align*}
  for $c \in C^\infty (I,M)$. Thus with respect to the final topology induced by $\{\Psi_{c,0}^{-1}\}_{c\in C^\infty(I,M)}$ the set $L^p (I, M\leftarrow TM)$ becomes a Hausdorff Banach manifold. 
   \end{proposition}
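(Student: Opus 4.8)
The plan is to transcribe the proof of Lemma \ref{lem: VB:charts} almost verbatim, replacing everywhere absolutely continuous sections in the fibre direction by $L^p$-sections; the only genuinely new ingredient is an $L^p$-version of the pushforward theorem for fibre-preserving maps, which governs the transition maps.

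First I would fix $c \in C^\infty(I,M)$ and show that $\Psi_{c,0}^{-1}$ is a well-defined bijection onto $(\pi_{TM})_*^{-1}(U_c)$. Via the Whitney sum identification \ref{setup: Whitney} a pair $(\xi,\eta) \in \AC^p_c(I,\Omega) \times L^p_c(I,M\leftarrow TM)$ becomes a single section over $c$, and $\Psi_{c,0}^{-1}$ is then the pushforward $\tau_*$ of the smooth map $\tau$ from \ref{setup: VBcharts}, which covers $\exp \colon \Omega \to M$ and is linear with linear isomorphisms $\tau_{\xi_x}$ along the fibres of $TM$. Since an $L^p$-section postcomposed with a smooth bundle morphism that is fibrewise linear over a continuous base curve is again an $L^p$-section (this is trivialisation-independent precisely because the overlap maps are continuous in $t$ and fibrewise linear), $\tau_*$ lands in $L^p_{\exp\circ\xi}(I,M\leftarrow TM)$, and the fibrewise inverses $\tau_{\xi_x}^{-1}$ furnish the inverse map. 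By construction $(\pi_{TM})_* \circ \Psi_{c,0}^{-1} = \Phi_c^{-1}\circ \pr_1$.

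Next I would compute the transition maps. For $c,d \in C^\infty(I,M)$ the manipulation that produced \eqref{bundle trans} gives, on $\Phi_d^{-1}(U_c\cap U_d) \times L^p_d(I,M\leftarrow TM)$,
\begin{displaymath}
 \Psi_{c,0}\circ\Psi_{d,0}^{-1} = \bigl(\Phi_c\circ\Phi_d^{-1},\ (T_{\text{fib}}\tau_d^c)_*\bigr),
\end{displaymath}
where $T_{\text{fib}}\tau_d^c$ is the fibre-derivative of the change-of-chart map \eqref{chchmap}, now pushed forward on $L^p$-sections. The first component is a chart change of $\AC^p(I,M)$, hence a smooth diffeomorphism between open sets of the model space by Theorem \ref{thm: ACman}. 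The second component is well defined and fibrewise a linear isomorphism $L^p_d(I,M\leftarrow TM)\to L^p_c(I,M\leftarrow TM)$, because $T_{\text{fib}}\tau_d^c$ is a smooth fibre-preserving map that is linear and invertible along the fibres; and it is smooth by the $L^p$-analogue of Proposition \ref{prop: ACOmega:Rep} — the version of the pushforward theorem for the mixed section space $\AC^p_c(I,\Omega)\times L^p_c(I,M\leftarrow TM)$, to be established in Appendix \ref{App: abscont} (cf.\ Remark \ref{rem: ext:ACProP}). Hence the $\{\Psi_{c,0}\}$ form an atlas with smooth, fibrewise-linear transition maps, so the final topology with respect to $\{\Psi_{c,0}^{-1}\}_{c\in C^\infty(I,M)}$ turns $L^p(I,M\leftarrow TM)$ into a Banach manifold and $(\pi_{TM})_*$ into a smooth vector bundle with the stated trivialisations.

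Finally, for the Hausdorff property I would argue as in Step 4 of the proof of Theorem \ref{thm: ACman}: two distinct points with the same image under $(\pi_{TM})_*$ lie in a common chart domain $(\pi_{TM})_*^{-1}(U_c)$ whose fibre is the Banach space $L^p_c(I,M\leftarrow TM)$, hence are separated; two points with distinct images are separated by pulling back a separation in the Hausdorff base $\AC^p(I,M)$ along the continuous map $(\pi_{TM})_*$. The main obstacle is the smoothness of the second component of the transition map, i.e.\ the $L^p$-pushforward theorem invoked above; this is the only step not already contained in Lemma \ref{lem: VB:charts}, and I expect the bulk of the technical work — extending Proposition \ref{prop: ACOmega:Rep} and Remark \ref{rem: ext:ACProP} from $\AC^p$- to $L^p$-regularity in the fibre — to sit there.
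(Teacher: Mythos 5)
Your proposal is correct and follows essentially the same route as the paper: the paper likewise mimics Lemma \ref{lem: VB:charts}, verifies that $\Psi_{c,0}^{-1}$ produces genuine $L^p$-sections, identifies the transition maps as $(\Phi_d\circ\Phi_c^{-1},(T_{\text{fib}}\tau_d^c)_*)$, and gets the Hausdorff property from the base. The one step you defer --- smoothness of the $L^p$-pushforward of fibrewise-linear maps --- is not stated in the paper as a separate $L^p$-analogue of Proposition \ref{prop: ACOmega:Rep} but is carried out inline, exactly by exploiting the fibrewise linearity you point to: one conjugates with $Q_{c,p,1}\times Q_{c,p,0}$ and $Q_{d,p,0}$, extends the resulting auxiliary map off $I$ via Lemma \ref{lem: ted:ext}, and applies the superposition results of \cite[Lemma 2.1, Proposition 2.2]{hgmeasure16} for maps that are linear in the $L^p$-variable.
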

 \begin{proof}
 	By definition of $L^p (I, M\leftarrow TM)$, we have to test in bundle trivialisation if $\Psi_{c,0}^{-1} (\eta,\xi)$ is an $L^p$-section. 
    Hence we partition $I$ into smaller intervalls such that the image of $\Psi_{c,0}^{-1} (\eta,\xi)|_{I_i}$ is contained in the domain of $T\kappa_i$. To simplify the notation let us drop the subscripts in this argument. 
    We consider the smooth map $\gamma \colon I \times T_{c(0)}M \rightarrow E, \gamma (t,y) \coloneq \pr_2 \circ T \kappa (\tau (\eta(t),P^c_{0,t} (y))$. By construction $\gamma (t,\cdot)$ is linear for each $t \in I$.
    Now Lemma \ref{lem: ted:ext} shows that there is an open neighborhood $I \subseteq W$ and a smooth extension $\Gamma \colon U \times  T_{c(0)}M \rightarrow E$ for which $\Gamma(s,\cdot)$ is linear for each $s \in U$. 
    We have thus established the prerequesits of \cite[Lemma 2.1]{hgmeasure16} which allows us to deduce that $T\kappa \Psi_{c,0}^{-1} (\eta,\xi) = \Gamma (\text{inc}_I^W, Q_{c,p,0}^{-1} (\xi))$ is an $L^p$-function, where $\text{inc}_I^W \colon I \rightarrow W$ is the inclusion.
    Hence the definition of $\Psi_{c,0}^{-1}$ makes sense and the images of the maps $\Psi_{c,0}^{-1}$ clearly cover $L^p (I, M\leftarrow TM)$ with $(\pi_{TM})_* \Psi_{c,0}^{-1} = \Phi_c^{-1}$.
 	
 	We want $L^p (I, M\leftarrow TM)$ to become the total space of a bundle. To this end we only have to check the smoothness of the transition maps. 
 	Then \cite[III, \S 1 Proposition 1.2]{MR1666820} shows that we obtain a manifold with the desired topology. This manifold is Hausdorff since the base $\AC^p (I,M)$ is so.
 	Let us prove that the transition map $\Psi_{d,0} \circ \Psi_{c,0}^{-1} \colon \AC^p (I,O_{c}^d) \times  L^p_c (I,M\leftarrow TM) \rightarrow \AC_d^p (I,TM) \times L^p_d (I,M\leftarrow TM)$ is smooth for $c,d \in C^\infty (I,M)$, where $O_c^d$ is a the (open) subset on which the change of charts $\Phi_d \circ \Phi_c^{-1}$ is defined. As seen in the proof of Lemma \ref{lem: VB:charts}, the transition map satisfies $\Psi_{d,0} \circ \Psi_{c,0}^{-1} = (\Phi_{d}\circ \Phi_c^{-1}, (T_{\text{fib}}\tau_d^c)_*)$, where
 	\begin{displaymath}
 	 (T_{\text{fib}}\tau_d^c)_* \colon \AC^p_c (I,O_c^d) \times L^p_d (I,M \leftarrow TM), \ (\xi,\eta) \mapsto \tau \circ (\xi, \eta),
 	\end{displaymath}
 	and $\tau_c^d$ is the smooth map from \eqref{chchmap}. Since the change of charts 
 	is smooth, it suffices to prove that $(T_{\text{fib}}\tau_d^c)_*$ is smooth.
 	Pick $\xi \in \AC^p_c (I,O_c^d)$ and let us prove that $(T_{\text{fib}}\tau_d^c)_*$ is smooth on a neighborhood of $\{\eta\} \times  L^p_d (I,M \leftarrow TM)$. Fix $r0$ such that $B^\infty_r (\eta) \subseteq O_c^d$. Since addition in the Banach space $\AC^p_c (I,TM)$ is smooth, we may assume (by considering the mapping $(\gamma, \eta) \mapsto  (T_{\text{fib}}\tau_d^c)_* (\gamma +\eta, \xi)$ that $\eta =0$. Composing with the Banach space isomorphisms from \ref{top:sectsp} the transition will be smooth if \begin{displaymath}
Q_{d,p,0}^{-1} \circ (T_{\text{fib}}\tau_d^c)_* \circ (Q_{c,p,1} \times Q_{c,p,0}) \colon \AC^p (I, B_r^{G_{c(0)}} (0)) \times L^p(I,T_{c(0)}M) \rightarrow L^p(I,T_{d(0)}M)
 	\end{displaymath}
 is smooth.
 Consider now the smooth map 
 \begin{align*}
 \alpha \colon I \times B_r^{G_{c(0)}}(0) \times T_{c(0)}M &\rightarrow T_{d(0)}M,\\
 	(t,x,y) &\mapsto (P_{0,t}^d)^{-1}  (x) T_{\text{fib}} \tau_c^d (P_{0,t}^c  (x),P^c_{0,t}(y))									 			
 	\end{align*}	
 	and note that for all fixed pairs $(t,x)$ the map $\alpha(t,x,\cdot)$ is linear.
 We use again Lemma \ref{lem: ted:ext} to construct an open neighborhood $I \subseteq W$ (with $\text{inc}_I^W$ the inclusion) and a smooth extension $A \colon W  \times B_r^{G_{c(0)}}(0) \times T_{c(0)}M \rightarrow T_{d(0)}M$. Again, $\overline{\alpha}(s,x,\cdot)$ is linear for each $(s,x) \in W \times B_r^{G_{c(0)}}(0)$. Hence \cite[Proposition 2.2]{hgmeasure16} shows that $\tilde{A} \colon C(I, W  \times B_r^{G_{c(0)}}(0)) \times L^p (I,T_{c(0)}M) \rightarrow L^p (I,T_{d(0)}M), (c,l) \mapsto A \circ (c,l)$ is smooth.
 Since $C(I, W  \times B_r^{G_{c(0)}}(0)) \cong C(I, W) \times C(I, B_r^{G_{c(0)}}(0))$ and the inclusion $\iota \colon \AC^p (I,B_r^{G_{c(0)}}(0)) \rightarrow C(I,B_r^{G_{c(0)}}(0))$ is smooth, we see that $Q_{d,p,0}^{-1} \circ (T_{\text{fib}}\tau_d^c)_* \circ (Q_{c,p,1} \times Q_{c,p,0}) = \tilde{A} (\text{inc}_I^W, \cdot) \circ \iota \times \id_{L^p (I,T_{c(0)}M)})$ is smooth. 
  \end{proof}
  Using the results obtained so far, we can copy \cite[2.3.16 Corollary]{MR1330918} to obtain the following.
 
 \begin{proposition}\label{prop:H0bun}
  Let $M$ be a strong Riemannian manifold. Then 
  \begin{displaymath}
   \partial \colon \AC^p (I,M) \rightarrow L^p (I,M\leftarrow TM), \quad \gamma \mapsto \dot{\gamma}
  \end{displaymath}
 is a smooth section of the bundle $(\pi_{TM})_* \colon L^p (I,M\leftarrow TM) \rightarrow \AC^p (I,M)$.
 \end{proposition}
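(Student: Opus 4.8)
\textbf{That $\partial$ is a section} is immediate: for $\gamma\in\AC^p(I,M)$ one has $\dot\gamma(t)\in T_{\gamma(t)}M$, hence $\pi_{TM}\circ\dot\gamma=\gamma$, and in any vector bundle trivialisation of $TM$ the local representative of $\dot\gamma$ is (a.e.) the derivative of the $\AC^p$-representative of $\gamma$, so an $L^p$-function; thus $\dot\gamma\in L^p_\gamma(I,M\leftarrow TM)$. The real content is smoothness. Since $\AC^p(I,M)$ carries the final topology with respect to the parametrisations $\Phi_c^{-1}$, the total space $L^p(I,M\leftarrow TM)$ is the Banach bundle with trivialisations $\Psi_{c,0}$ satisfying $(\pi_{TM})_*\circ\Psi_{c,0}^{-1}=\Phi_c^{-1}$, and $\partial$ maps $U_c$ into $(\pi_{TM})_*^{-1}(U_c)$, it suffices to show that for every $c\in C^\infty(I,M)$ the local representative
\[
 \Psi_{c,0}\circ\partial\circ\Phi_c^{-1}\colon \AC^p_c(I,\Omega)\longrightarrow \AC^p_c(I,\Omega)\times L^p_c(I,M\leftarrow TM)
\]
is smooth. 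Its first component is the identity, so only the principal part $P_c\colon\AC^p_c(I,\Omega)\to L^p_c(I,M\leftarrow TM)$ needs attention.

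\textbf{Computing $P_c$.} With the Riemannian exponential as local addition, $\Phi_c^{-1}(\eta)=\exp\circ\eta$, so for $\gamma=\exp\circ\eta$ one has $\dot\gamma(t)=T_{\eta(t)}\exp(\dot\eta(t))$ for a.e.\ $t$, with $\dot\eta(t)\in T_{\eta(t)}TM$. I would split $\dot\eta(t)$ into its $K$-vertical and $K$-horizontal components: by the definition of $\tau$ in \ref{setup: VBcharts} the map $\tau_{\eta(t)}$ is precisely $T_{\eta(t)}\exp$ restricted to vertical vectors (precomposed with $(K|_{\mathrm{vert}})^{-1}$), while $K(\dot\eta(t))=\tfrac{D}{\dd t}\eta(t)$ is the covariant derivative along $c$ and $T\pi_{TM}(\dot\eta(t))=\dot c(t)$, so one arrives at
\[
 \dot\gamma(t)=\tau\Bigl(\eta(t),\ \tfrac{D}{\dd t}\eta(t)+G\bigl(t,\eta(t)\bigr)\Bigr),
\]
where $G(t,w)=\tau_w^{-1}\bigl(T_w\exp(\mathrm{hl}_w\dot c(t))\bigr)$ is assembled from the horizontal lift $\mathrm{hl}_w\dot c(t)\in T_wTM$ of $\dot c(t)$ and is a smooth fibre-preserving map on an open subset of the pullback bundle $c^*TM$ (all of $\tau$, $\tau^{-1}$, $T\exp$ and the connection $K$ being smooth on a strong Riemannian manifold). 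Hence $P_c(\eta)=\tfrac{D}{\dd t}\eta+G_*(\eta)$, the sum of the covariant derivative operator and the pushforward $G_*$.

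\textbf{Smoothness of the two summands.} The pushforward $G_*\colon\eta\mapsto\bigl(t\mapsto G(t,\eta(t))\bigr)$ is smooth as a map $\AC^p_c(I,\Omega)\to\AC^p_c(I,TM)$ by Proposition \ref{prop: ACOmega:Rep} (the $\Omega$-lemma for absolutely continuous curves); postcomposing with the continuous linear inclusion $\AC^p_c(I,TM)\hookrightarrow L^p_c(I,M\leftarrow TM)$ then shows $G_*$ is smooth into the $L^p$-bundle --- alternatively one applies the $L^p$-pushforward results \cite[Lemma 2.1, Proposition 2.2]{hgmeasure16} together with Lemma \ref{lem: ted:ext}, exactly as in the proof that $L^p(I,M\leftarrow TM)$ is a vector bundle. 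For the covariant derivative I would conjugate by the parallel transport trivialisations $Q_{c,p,1}$ and $Q_{c,p,0}$: writing $\bar\eta(t)=P^c_{t,0}\eta(t)\in\AC^p(I,T_{c(0)}M)$ and using the standard identity $\tfrac{D}{\dd t}\bigl(P^c_{0,t}v(t)\bigr)=P^c_{0,t}\dot v(t)$, the operator $\eta\mapsto\tfrac{D}{\dd t}\eta$ is identified with the ordinary derivation $\bar\eta\mapsto\dot{\bar\eta}$, $\AC^p(I,T_{c(0)}M)\to L^p(I,T_{c(0)}M)$, which is continuous linear, hence smooth. As a sum of two smooth maps, $P_c$ is smooth, which proves the proposition; this is the route of \cite[2.3.16 Corollary]{MR1330918} in the classical case.

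\textbf{Main obstacle.} The delicate step is the middle one: establishing the horizontal/vertical decomposition and the displayed formula for $\dot\gamma$ intrinsically on a (possibly infinite-dimensional) strong Riemannian manifold, where coordinate computations with Christoffel symbols are unavailable, and verifying that $G$ is genuinely a \emph{smooth} fibre-preserving map on an \emph{open} subset of $c^*TM$ so that Proposition \ref{prop: ACOmega:Rep} (resp.\ its $L^p$-analogue) applies. Everything after that reduces to smoothness of pushforwards and of the linear derivation operator, just as in the finite-dimensional account.
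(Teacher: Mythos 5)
Your argument is correct and is essentially the proof the paper intends: the paper itself gives no details and only says one can copy \cite[2.3.16 Corollary]{MR1330918}, and your local computation --- splitting $\dot\eta$ via the connection $K$ into the covariant derivative $\nabla_c\eta$ (which becomes the continuous linear derivation operator after conjugating by $Q_{c,p,1}$ and $Q_{c,p,0}$) plus the pushforward of a smooth fibre-preserving map handled by Proposition \ref{prop: ACOmega:Rep} --- is exactly that classical route, carried out with the tools the paper has already assembled. The ``delicate step'' you flag is already covered by \ref{setup: VBcharts} and the Dombrowski splitting theorem cited there, so there is no remaining gap.
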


 \section{The SRVT on manifolds of absolutely continuous curves}\label{sect: SRVT}
 
 We will now investigate the square root velocity transform in the framework of absolutely continuous functions.
 In the case of smooth functions, it is essential that the square root velocity transform is a diffeomorphism since one wants to construct the Riemannian metric as a pullback metric.
 As the scaling (see below) is not differentiable this is no longer possible for manifolds of absolutely continuous curves.
 However, in the vector valued case the SRVT is still a homeomorphism, whence the SRVT still relates the geodesic distances in the absolutely continuous setting.

 Our aim here is to study extensions of these results to manifold valued absolutely continuous curves. 
 In a first step we consider just an extension of the various generalised SRVT constructions for smooth maps to absolutely continuous curves. 
 It will turn out that as in the vector valued case (see \cite{bruveris2015}) only the scaling is not differentiable.
 
 \begin{setup}[Building blocks of an SRVT]
  Let us recall the three generic building blocks of any  SRVT considered so far in the literature:
 	\begin{enumerate}
 		\item \emph{Derivation} $\partial$, mapping absolutely continuous curves to their derivative.\\
 		Proposition \ref{prop:H0bun} shows that this is a smooth map from $\AC^1 (I,M)$ to a bundle of $L^1$-functions over $\AC^1 (I,M)$.\footnote{Depending on the generalisation of the SRVT, we will have to restrict to certain submanifolds of $\AC (I,M)$.}
 		\item \emph{Transport} $\alpha$, a smooth map from the $L^1$-bundle into a vector space of $L^1$-functions.
 		In the examples, the transport will turn out to be pushforward by a bundle map. 
 		\item \emph{Scaling}, Consider for some Hilbert space $E$, the (H\"{o}lder) continuous map $$\text{sc} \colon L^1 (I,E) \rightarrow L^2 (I,E),\quad  \text{sc}(v)(t) := \begin{cases}
 		\frac{v(t)}{\sqrt{\norm{v(t)}}} & \text{ if } v(t)\neq 0\\
 		0 & \text{ else}
 		\end{cases}.$$
 		is not differentiable, but a homeomorphism (see \cite{bruveris2015})
 	\end{enumerate}
 	Having chosen these building blocks such that their composition makes sense, one constructs the SRVT via 
 	$$\SRVT \coloneq \text{sc} \circ \alpha \circ \partial.$$
 \end{setup}
 
 Note that due to the derivation, the square root velocity transform will in general not be injective as $\SRVT (c)$ ``forgets'' the starting point of $c$.
 It is essential for the numerical methods to remedy this problem and there are two ways do this:
 \begin{enumerate}
  \item Choose $\star \in M$ and restrict to the closed submanifold $\AC_\star (I,M)$ of all curves which start at $\star$.\footnote{By Lemma \ref{lem: auxmap} the map $\ev_0 \colon \AC (I,M) \rightarrow M$ is a submersion, whence $\AC_\star (I,M) = \ev_0^{-1} (\star)$ is a closed submanifold. In the case where $M=G$ is an infinite-dimensional Lie group, the Lemma is not applicable. However, $\AC_\star (I,G)$ is a closed submanifold by virtue of \cite[Lemma 4.9]{hgmeasure16}.}
  \item Use the smooth map $\ev_0 \colon \AC (I,M) \rightarrow M$ to conserve the starting point, the SRVT is then constructed as a map $(\ev_0,\SRVT) \colon \AC (I,M) \rightarrow M \times B$, where $B$ is a suitable bundle of $L^2$-functions and the Riemannian metric is the product of the metrics on $M$ and $B$.
  \end{enumerate}
 
 We will now study the square root velocity transform and its generalisations from the literature. 
 In all cases it will turn out that the square root velocity transform on absolutely continuous functions can be realised as a composition of a smooth map with the (non-smooth) scaling map. In a second step we investigate then whether the $\SRVT$ actually is a homeomorphism.\footnote{Unfortunately, the author was not yet able to establish this in the Riemannian manifold case, whereas the Lie group case is unproblematic.}
 \medskip
 
 \begin{tabular}{|l | l | l | l | }
\hline {\textbf{Target manifold}} & {\textbf{transport map}} & {\textbf{Pullback of }}& {\textbf{Source}} \\ \hline 
Euclidean vector space & identity & $L^2$-metric & \cite{srivastava11sao} \\
Riemannian manifold & parallel transport 
& $L^2$-metric &\cite{su14sao} \\
Lie group  & Maurer-Cartan form & $L^2$-metric &\cite{celledoni15sao} \\ \hline
 \end{tabular}
 
 The vector space case has already been treated in literature for absolutely continuous functions (cf.\ \cite{srivastava11sao} and the summary in the Introduction). 
 A detailed analysis concerning the smoothness and homeomorphism properties of the square root velocity transform in this case can be found in \cite{bruveris2015}.
 Hence we move on to the first manifold valued case. 
 
 \subsection*{Riemannian manifolds: Parallel transport}\addcontentsline{toc}{subsection}{Riemannian manifold case: Parallel transport}

 The SRVT discussed in this section uses parallel transport to identify the derivatives of curves with vectors in a reference tangent space. Here one chooses in advance a global reference point. As mentioned in \cite{su14sao}, the behaviour of the method in a given problem depends on the choice of the reference point.
 
 For simplicity we restrict to a finite-dimensional setting, since then all results we need are readily available in the literature (and the SRVT has so far only been studied for finite-dimensional Riemannian manifolds).
 
 \begin{setup}
  Let $(M,G)$ be a finite-dimensional Riemannian manifold and $\star \in M$ be some point.
  From now on we will treat $\star$ as out reference point in the manifold and relegate qustions concerning comparisons to the tangent space over $\star$.
 \end{setup}
  
  We wish to transport vectors to the tangent space over $\star$ using parallel transport along minimal geodesics. 
  To this end, one needs to assure that the curves one is interested in pass only through points which are connected to the reference point $\star$ by a unique minimal geodesic.
  This set of points is determined by the geometry of the target manifold.
  In terms of Riemannian geometry, the open set we seek is the complement $\Omega_\star = M \setminus C(\star)$ of the cut-locus of $\star$ (cf.\ \cite[2.1.14 Theorem]{MR1330918}).
  Recall that the cut-locus is in general quite complicated (for more information see \cite[Section 2.1]{MR1330918}). 
  
  We now restrict ourselves to the open neighborhood $\Omega_\star$ of $\star$ in which there are unique minimal geodesics.
  For $p \in U_\star$ we let $c_{\star,p}$ be the unique minimizing geodesic connecting $\star$ with $p$.
  Here by minimizing geodesic we mean a geodesic with $d(c_{\star,p} (t),c_{\star,p} (s)) = |t-s|$.
  
  \begin{defn}
   Define the transport map 
    \begin{displaymath}
     \pt_\star \colon T\Omega_\star  \rightarrow T_\star M ,\quad v_p \mapsto (P^{c_{\star,p}}_{0,d(\star,p)})^{-1} (v_p)
    \end{displaymath}
   where $P^{c_{\star,p}}_{0,d(\star,p)}$ denotes parallel translation along the minimal geodesic $c_{\star, p}$ from $T_\star M$ to $T_p M$.
  \end{defn}
 
  \begin{lem}
   Let $\star \in M$ be arbitrary, then $\pt_\star$ is a smooth $T_\star M$ valued $1$-form.
   Further, the map $b_\star \colon T\Omega_\star \rightarrow \Omega_\star \times T_\star M, v_p \mapsto (p, \pt_\star (v_p))$ is a bundle isomorphism. 
  \end{lem}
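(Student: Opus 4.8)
The plan is to verify each assertion by reducing to known facts about the Riemannian exponential map and parallel transport on the open set $\Omega_\star = M \setminus C(\star)$, where both the distance function $p \mapsto d(\star,p)$ and the minimal geodesic $c_{\star,p}$ depend smoothly on $p$.

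\textbf{Smoothness of $\pt_\star$.} First I would recall the standard fact (see e.g.\ \cite[2.1.14 Theorem]{MR1330918}) that on $\Omega_\star$ the exponential map $\exp_\star$ restricts to a diffeomorphism from a star-shaped open set $\widetilde{\Omega}_\star \subseteq T_\star M$ onto $\Omega_\star$; consequently $p \mapsto \exp_\star^{-1}(p)$ is smooth on $\Omega_\star$, and hence so is $p \mapsto d(\star,p) = \norm{\exp_\star^{-1}(p)}_{G_\star}$ on $\Omega_\star \setminus \{\star\}$ (and the unit-speed geodesic $c_{\star,p}$ has initial velocity $\exp_\star^{-1}(p)/d(\star,p)$, again smooth in $p$). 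Parallel transport along a smooth family of geodesics is smooth: it is obtained by solving the linear covariant-derivative ODE along the geodesic variation $(s,p) \mapsto \exp_\star(s\,\exp_\star^{-1}(p))$, so the transported frame depends smoothly on $p$. Therefore $v_p \mapsto (P^{c_{\star,p}}_{0,d(\star,p)})^{-1}(v_p)$ is smooth on $T\Omega_\star$ (the apparent singularity at $p = \star$ is removable: there $\pt_\star$ is just the identity on $T_\star M$, and smoothness across $\star$ follows from writing everything in normal coordinates, where parallel transport along radial geodesics is smooth in the full variable). Linearity on each fibre is immediate since parallel transport is a linear isomorphism between fibres, so $\pt_\star$ is a $T_\star M$-valued $1$-form.

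\textbf{$b_\star$ is a bundle isomorphism.} The map $b_\star = (\pi_{TM}, \pt_\star)\colon T\Omega_\star \to \Omega_\star \times T_\star M$ is fibre-preserving by construction, and on each fibre $T_p M$ it is the linear isomorphism $(P^{c_{\star,p}}_{0,d(\star,p)})^{-1}$ (so in particular an isomorphism of vector spaces $T_p M \cong T_\star M$). Smoothness of $b_\star$ follows from the previous paragraph; smoothness of the inverse $b_\star^{-1}\colon (p,w) \mapsto P^{c_{\star,p}}_{0,d(\star,p)}(w)$ follows the same way, since forward parallel transport along the smooth geodesic family is smooth in $p$. Hence $b_\star$ is a diffeomorphism which is linear on fibres, i.e.\ a vector bundle isomorphism over $\Omega_\star$ (covering $\id_{\Omega_\star}$).

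\textbf{Main obstacle.} The only genuinely delicate point is smoothness at the reference point $p = \star$ itself and, more generally, making precise the claim that parallel transport along the minimal geodesics $c_{\star,p}$ is jointly smooth in $p$ up to (but not past) the cut locus. I would handle the former by passing to Riemannian normal coordinates centred at $\star$, in which radial parallel transport is manifestly smooth in $(p,v_p)$ including at the origin; and the latter by citing the smooth dependence of solutions of linear ODEs on parameters, applied to the covariant-derivative equation along the smooth geodesic variation $(\sigma, p)\mapsto \exp_\star(\sigma\exp_\star^{-1}(p))$, $\sigma \in [0,1]$. Everything else is routine bookkeeping with the bundle charts of $TM$.
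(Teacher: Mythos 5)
Your argument follows essentially the same route as the paper: use that $\exp_\star$ is a diffeomorphism from the star-shaped set $\widetilde{\Omega}_\star$ onto $\Omega_\star$, pass to the jointly smooth geodesic variation $(t,p)\mapsto \exp_\star(t\exp_\star^{-1}(p))$ on $[0,1]$, and invoke smooth parameter dependence of the linear parallel-transport ODE, then read off $b_\star$ and its inverse fibrewise. The only step the paper makes explicit that you leave implicit is the reparametrisation invariance of parallel transport, which is what identifies transport along the affinely parametrised variation with $P^{c_{\star,p}}_{0,d(\star,p)}$ from the definition of $\pt_\star$ (and which also disposes of the apparent singularity at $p=\star$ without needing a separate normal-coordinate argument).
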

 
 \begin{proof}
   Recall from \cite[1.6.2 Lemma]{MR1330918} that for fixed $c$ and $t_0,t_1$ parallel transport $P^c_{t_0,t_1}$ along $c$ is a linear isomorphism from $T_{c(t_0)} M$ to $T_{c(t_1)} M$. 
   Hence $\pt_\star$ is continuous linear on each fibre and we only have to establish continuity and smoothness of $\pt_\star$.
   
   Let $\exp \colon TM \supseteq U \rightarrow M$ be the Riemannian exponential map. 
   By definition (cf.\ \cite[2.1.4 Definition]{MR1330918}), the cut locus $C(\star)$ is the image of the infinitesimal cut locus $\widetilde{C} (\star) \subseteq T_\star M$ under $\exp_\star := \exp|_{T_\star M}$.
   Further, \cite[Theorem 2.1.14]{MR1330918} asserts that $\widetilde{C}(\star)$ is the boundary of a star shaped open neighborhood $\widetilde{\Omega}_\star$ in $T_\star M$ such that $\exp_\star (\widetilde{\Omega}_\star) = \Omega_\star$. 
   Following \cite[Section 9.1]{MR2243772} $\exp_\star$ is even a diffeomorphism which maps straight lines through the origin to minimal geodesics. 
   
   Consider the mapping 
   \begin{displaymath}
    F_\star \colon \Omega_\star \times [0,1] \rightarrow M 
    ,\quad F_\star (p,t) = \exp_\star (t\exp_\star^{-1} (p)).
   \end{displaymath}
   Clearly $R$ is smooth and since straight lines under the Riemannian exponential map get mapped to minimal geodesics, $F_\star(p,\cdot)$ is a reparametrisation of $c_{\star,p}$ by a smooth function which fixes $0$.
   Now recall from \cite[Theorem 24.1]{MR2428390} that parallel transport is reparametrisation invariant, i.e.\ $P^{F_\star(p,\cdot)}_{0,1} = P^{c_{\star,p}}_{0,d(\star,p)}$ (using that the reparametrisation fixes $0$ and $P^{c_{\star,p}}_{0,0}=\id_{T_\star M}$).
   
   For fixed $c \in C_\star^\infty ([0,1],M)$ parallel transport 
   $$P^c_{0,\cdot} \colon [0,1] \times T_{c(0)} M \rightarrow TM, (t,v_0) \mapsto P_{0,t}^c(v_0)$$
   is determined as the flow of a first order differential equation.
   Following \cite[Proof of Lemma 1.6.2]{MR1330918} the differential equation in local charts (i.e.\ $u\circ c(t) =:u(t)$) reads $$\begin{cases}
                                                                                                                                   \dot{v} &= - \Gamma(u)(\dot{u},v)\\
                                                                                                                                   v(0) &= u(v_0)
                                                                                                                                  \end{cases}$$ 
                                                                                                                                  where $\Gamma$ denotes the Christoffel symbols.
   Now replacing $u \circ c$ with $u \circ R$, it is easy to see that the right hand side of the differential equation depends smoothly on the parameter $p$. 
   It is well known (see e.g.\ \cite[Section II.9]{MR1071170}) that the solution of the differential equation $P^{F_\star(p,\cdot)}_{0,\cdot}$ depends also smoothly on the parameter $p \in \Omega_\star$.
   Summing up, we can write $\pt_\star (p) = (P^{F_\star(p,\cdot )}_{0,1})^{-1}$. 
   Parallel translation with respect to the Riemannian connection $P^{F_\star(p,\cdot)}_{0,1}$ is an isometry.
   As inversion on the subset of Banach space isomorphisms in $\mathcal{L} (T_{\star}M , T_{q}M)$ is smooth (cf.\ \cite[I \S 3, Proposition 3.9]{MR1666820}), $\pt_\star$ is smooth as a composition of smooth functions.  
   
   Now $b_\star$ is a smooth vector bundle morphism since $\pt_\star$ is a smooth $1$-form. Its inverse is given by 
   $$b_\star^{-1} \colon \Omega_\star \times T_\star M \rightarrow T\Omega_\star , \quad (p,v) \mapsto P^{c_{\star,p}}_{0, d(\star,p)} (v)$$
   Which is a smooth bundle morphism by \cite[III. \S 1 Proposition 1.3]{MR1666820} and arguments similar to the ones used to prove that $\pt_\star$ is a smooth $1$-form.  
   \end{proof}
   
   \begin{lem}\label{lem: globt:smooth}
    The maps \begin{align*}
              (\pt_\star)_* \colon L^1 (I, \Omega_\star \leftarrow T\Omega_\star) &\rightarrow L^1 (I,T_\star M),\quad h \mapsto \pt_\star \circ h \\
              (b_\star)_* \colon  L^1 (I, \Omega_\star \leftarrow T\Omega_\star) &\rightarrow \AC^p(I,\Omega_\star) \times L^1 (I,T_\star M),\quad  h \mapsto b_\star \circ h \\
              (b_\star^{-1})_* \colon \AC^p(I,\Omega_\star) \times L^1 (I,T_\star M) &\rightarrow L^1 (I, \Omega_\star \leftarrow T\Omega_\star),\quad (h_1,h_2) \mapsto b_\star^{-1} (h_1,h_2)
             \end{align*}
   are smooth.
   \end{lem}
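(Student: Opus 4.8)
The plan is to exploit that all three maps are pushforwards of smooth fibre-preserving bundle maps, so that their smoothness can be checked locally in the bundle charts, following the proof that $L^1(I,M\leftarrow TM)$ is a vector bundle almost verbatim. Concretely I would fix $c\in C^\infty(I,\Omega_\star)$, express each of the three maps in the canonical charts $\Phi_c$ on $\AC^p(I,\Omega_\star)$ and the bundle charts $\Psi_{c,0}$ on $L^1(I,\Omega_\star\leftarrow T\Omega_\star)$ (the Banach space $L^1(I,T_\star M)$ being its own chart), and then trivialise the pullback bundles along $c$ by parallel transport, i.e.\ by the Banach space isomorphisms $Q_{c,p,\bullet}$ of Appendix~\ref{App: abscont}. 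After these identifications every one of the three maps takes the form $(h_1,h_2)\mapsto A\circ(\text{inc}_I^W,h_1,h_2)$ for a smooth map $A\colon W\times U\times E\to F$ which is linear in the $E$-variable, with $W\supseteq I$ open, $U$ a suitable open parameter set and $E,F$ model spaces; for maps of this shape the smoothness of the pushforward $C(I,W)\times C(I,U)\times L^1(I,E)\to L^1(I,F)$ is precisely \cite[Proposition~2.2]{hgmeasure16}, while the fact that the composite takes values in $L^1$-sections rather than merely measurable ones is \cite[Lemma~2.1]{hgmeasure16}.

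For $(b_\star)_*$ I would first observe that its first component $h\mapsto\pr_1\circ b_\star\circ h=\pi_{T\Omega_\star}\circ h$ is simply the bundle projection $L^1(I,\Omega_\star\leftarrow T\Omega_\star)\to\AC^p(I,\Omega_\star)$, which is smooth as the projection of a vector bundle; by the universal property of the product it therefore suffices to treat the second component, which is $(\pt_\star)_*$. Read off in the chart $\Psi_{c,0}$ on the source, $(\pt_\star)_*$ is identified with $(\xi,\eta)\mapsto\bigl(t\mapsto\pt_\star(\tau(\xi(t),\eta(t)))\bigr)$ with $\tau$ the map from~\ref{setup: VBcharts}; composing with $Q_{c,p,1}\times Q_{c,p,0}$ on the source turns it into the pushforward of a smooth map $\alpha\colon I\times B_r^{G_{c(0)}}(0)\times T_{c(0)}M\to T_\star M$, built from $\pt_\star$, $\tau$ and parallel transport along $c$, which is linear in its last variable. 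Lemma~\ref{lem: ted:ext} supplies a smooth extension of $\alpha$ over $W\times B_r^{G_{c(0)}}(0)\times T_{c(0)}M$ for some open $W\supseteq I$, still fibre-linear, and \cite[Proposition~2.2]{hgmeasure16} together with the smoothness of the inclusion $\AC^p(I,B_r^{G_{c(0)}}(0))\hookrightarrow C(I,B_r^{G_{c(0)}}(0))$ then yields smoothness of $(\pt_\star)_*$ in these charts, hence globally.

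The map $(b_\star^{-1})_*$ is handled symmetrically: in the charts $\Phi_c$ on the $\AC^p(I,\Omega_\star)$-factor of the source and $\Psi_{c,0}$ on the target, the bundle isomorphism $b_\star^{-1}$ induces a smooth fibre-preserving map into the $L^1$-section bundle which is linear in the $T_\star M$-variable; one checks with \cite[Lemma~2.1]{hgmeasure16} that its pushforward really produces genuine $L^1$-sections, extends the relevant fibre-linear map from $I$ to an open $W$ via Lemma~\ref{lem: ted:ext}, and concludes smoothness from \cite[Proposition~2.2]{hgmeasure16}. (Once $(b_\star)_*$ and $(b_\star^{-1})_*$ are both known to be smooth and mutually inverse one even obtains that $(b_\star)_*$ is a diffeomorphism, but the lemma only asks for smoothness.)

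The main obstacle is purely organisational: arranging the trivialisations and parallel-transport identifications so that each composite has exactly the form demanded by \cite[Proposition~2.2]{hgmeasure16} -- a smooth, fibre-linear map on the product of an open neighbourhood of $I$ with the parameter sets -- and checking that the extensions produced by Lemma~\ref{lem: ted:ext} retain fibre-linearity. No new idea beyond those used for the $L^1$-bundle is needed; the smoothness of $\pt_\star$, $b_\star$ and $b_\star^{-1}$ as bundle maps (established in the preceding Lemma) is exactly what makes the induced fibre-preserving maps smooth, and everything else is the routine bookkeeping already carried out there.
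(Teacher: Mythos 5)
Your proposal is correct and follows essentially the same route as the paper: reduce to the canonical charts $\Psi_{c,0}$ and $\Phi_c$ via the final topology, trivialise by the parallel-transport isomorphisms $Q_{c,p,\bullet}$, reduce each map to the pushforward of a smooth fibre-linear auxiliary map on $I\times B_r^{G_{c(0)}}(0)\times T_{c(0)}M$, extend it over an open neighbourhood of $I$ with Lemma~\ref{lem: ted:ext}, and invoke the pushforward results of \cite{hgmeasure16}; the paper likewise treats $(b_\star)_*$ via its two components and dispatches $(b_\star^{-1})_*$ as ``virtually the same argument.'' No substantive differences.
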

   
   \begin{proof}
    Since $L^1 (I, \Omega_\star \leftarrow T\Omega_\star)$ carries the final topology with respect to $\{\Psi_{c,0}^{-1}\}_{c \in C^\infty (I,M)}$ it suffices to prove that $(\pt_\star)_* \circ \Psi_{c,0}^{-1}$ is smooth for every $c \in C^\infty (I,M)$.
    To simplify the notation, assume that $\exp \colon T\Omega_\star\supseteq \Omega \rightarrow \Omega_\star$ is the restriction of the Riemannian exponential map of $(M,G)$ (and $\Omega$ is chosen such that $\exp$ induces a diffeomorphism on $T_m \Omega_\star \cap \Omega$ for every $m\in \Omega_\star$. 
    Fix $\eta \in \AC_{c}^p (I,\Omega)$ and  $r>0$ with $B_r^\infty (\eta) \subseteq \Omega$. Define $\alpha (\gamma) \coloneq \alpha + \eta$ we can consider the map  
    \begin{displaymath}
     h_r \coloneq (\pt_\star)_* \circ \Psi_{c,0}^{-1} \circ (\alpha \circ Q_{c,p,1}^{-1}) \times Q_{c,p,0}^{-1} \colon \AC^p (I,B_r^{G_{c(0)}} (0)) \times L^p (I,T_{c(0)}M) \rightarrow L^p (I,T_\star M).
    \end{displaymath}
     Since addition in $\AC^p_c (I, TM)$ is smooth, it suffices to establish smoothness of $h_r$. 
     To this end consider the auxiliary map $h \colon I \times B_r^{G_{c(0)}} (0) \times T_{c(0)}M \rightarrow T_\star M , (t,x,y) \mapsto \pt_\star \circ \tau \circ (P^c_{0,t} (x) + \eta (t), P^c_{0,t} (y))$ (with $\tau$ as in \ref{setup: VBcharts}).
     Note that $h$ is smooth and $h(t,x,\cdot)$ is linear for each pair $(t,x)$. Hence we extend $h$ with Lemma \ref{lem: ted:ext} and use \cite[Proposition 2.4]{hgmeasure16} to see that $\tilde{h} \colon \AC^p (I,B_r^{G_{c(0)}} (0)) \times L^p (I,T_{c(0)}M) \rightarrow L^p (I,T_\star M), \ (\eta, \xi) \mapsto h \circ (\id_I, \eta, \xi)$ is smooth. By construction $\tilde{h} = h_r$.
     We deduce that $(\pt_\star)_*$ is smooth.
     
     Now $(b_\star)_* = ((\pi_{T\Omega_\star})_*,(\pt_\star)_*)$ is smooth by Proposition \ref{prop: postcom:sm}.
     
     Finally, we have to establish smoothness of the inverse. Again by virtue of the final topology on $\AC^p (I,\Omega_\star)$ it is enough to prove that 
     $$A_c \coloneq \Psi_{c,0} \circ (b_{\star}^{-1})_* \circ (\Phi_c^{-1} \times \id) \colon \AC^p_c(I,\Omega)\times L^1 (I,T_\star M)) \rightarrow \AC^p_c(I,\Omega) \times L^p_c (I, \Omega_\star \leftarrow T\Omega_\star)$$ 
     is smooth for every $c \in C^\infty (I,M)$. Observe that $A_c (h_1,h_2) = (h_1, A_{c,2} (h_1,h_2)$ (cf.\ proof of Lemma \ref{lem: VB:charts}).
     Hence it suffices to establish smoothness of $A_{c,2}$. Working again with the vector bundle isomorphisms $Q_{c,p,i}$ for $i\in \{1,2\}$, the argument needed to establish smoothness of $A_{c,2}$ are virtually the same as needed to establish smoothness of $(\pt_\star)_*$.
     We omit them here.
     \end{proof}

   \begin{setup}[SRVT on Riemannian manifold]
   Following \cite{su14sao} the square root velocity tranform of $c \in \text{Imm} (I,\Omega_\star)$ with respect to some choice of $\star \in M$ is given by 
    \begin{displaymath}
     \SRVT (c) := \frac{(P^{c_{\star,c(t)}}_{0,d(\star,c(t)})^{-1}\dot{c}(t)}{\sqrt{\norm{\dot{c}(t)}}}.
    \end{displaymath}
   Since parallel transport is an isometry, we can write $\SRVT (c) = \text{sc} \circ \pt_\star (\dot{c})$, where $\text{sc}$ denotes the scaling.
   Thus the square root velocity transform for Riemannian manifolds proposed in \cite{su14sao} extends to a map 
   \begin{displaymath}
    (\ev_0, \SRVT) \colon \AC^1 (I,\Omega_\star) \rightarrow \Omega_\star \times L^2 (I,T_\star M),\quad c \mapsto \text{sc} \circ (\pt_\star)_*\circ \partial (c).
   \end{displaymath}
   Following our general theme, the map decomposes into a smooth part $(\pt_\star)_*\circ \partial$ followed by the continuous scaling.
   \end{setup}
   
   It turns out that in the setting of absolutely continuous curves it is quite challenging to construct an inverse for the SRVT.
   
   \begin{setup}[Inverse of the SRVT]\label{setup: inverse SRVT}
   Let us now briefly discuss the inverse of the square root velocity transform. 
   Assume that $\gamma \in \AC^1 (I,\Omega_\star)$ with $h=\SRVT(\gamma)$. 
   Then $V(\gamma (t), h)(t)=\dot{\gamma} (t)$ whence $\gamma$ is a so called $\AC^p$-Caratheodory solution\footnote{Let $W \subseteq \R \times E$ be a subset and $f \colon W \rightarrow E$ a map. Then $\gamma \in \AC^p (I,E)$ with $t_0\in I$ is an \emph{$\AC^p$-Caratheodory solution} to $\dot{y} = f(t,y), y(t_0)=y_0$ if $(t,\gamma(t)) \in W,  \forall t \in I, \gamma(t_0) = y_0$ and $\dot{\gamma} = [t\mapsto f(t,\gamma(t))$.} to the initial value problem 
   \begin{equation}\label{eq: sol:cara}
    \begin{cases}
     \dot{\alpha}(t) = V (\alpha(t) , h)(t) \\
     \alpha (0) = \alpha_0
    \end{cases}
   \end{equation}
   for $\alpha_0 = \gamma (0)$. If we restrict our attention to smooth curves (as in \cite{su14sao}) the differential equation clearly admits a solution which depends continuously on $h$.
   Hence for the subset $C^\infty (I,T_\star M) \subseteq L^2 (I,T_\star M)$ the inverse is defined by mapping a smooth function to the solution of \eqref{eq: sol:cara}.
   To generalise this one has to establish that \eqref{eq: sol:cara} admits a unique solution. We sketch here only an argument and hope to carry out these computations in future work.
   
   Define
   \begin{displaymath}
   V \colon I \times \Omega_\star \times L^2 (I,T_\star M) \rightarrow T\Omega_\star), V(t,p,h) \mapsto b_\star^{-1} (p, \text{sc}^{-1}(h))(t)
   \end{displaymath}
   By construction $V (\cdot, p, h)$ is an $L^1$-curve in $T_pM$.
   We fix now $h$ and define $$V_h \colon I \times \Omega_\star \rightarrow T\Omega_\star, (p,t) \mapsto V(t,p,h) = P^{c_{\star,p}}_{0,t} \circ \text{sc}^{-1}(h)(t).$$ 
   As $P^{c_{\star,\cdot}}_{0,\cdot} (\cdot)$ is smooth and $\text{sc}^{-1}(h)$ is an $L^1$-function, $V_h (\cdot, p)$ is $L^1$-integrable for every fixed $p\in \Omega_\star$, so in particular measurable.
   Further , $V_h(t, \cdot)$ is continuous (even smooth!) for each $t\in I$, whence $V_h$ is a so called Caratheodory function (see \cite[Section 4.10]{MR2378491}).
   In particular, $V_h$ is jointly measurable. If we can prove now that $V_h$ is integrably Lipschitz continuous (see \cite[22.36]{MR1417259}), then \eqref{eq: sol:cara} has a unique Caratheodory solution by \cite[Theorem 30.9]{MR1417259}.
   We do not carry out these computations here, but hope to provide details in future work.
      
   Hence if we can show that the differential equations \eqref{eq: sol:cara} have unique solution for all $\alpha_0$ and $h$, then we obtain an inverse of the SRVT via 
   \begin{displaymath}
    \Omega_\star \times L^2 (I,T_\star M) \rightarrow \AC^p (I,M) , (\alpha,h) \mapsto \gamma.
   \end{displaymath}
   where $\gamma$ is the solution of \eqref{eq: sol:cara}. Continuity of this mapping can then be established by \cite[Theorem 30.10]{MR1417259}.
   \end{setup}
   
   In the next section we pass on to the case of Lie groups as target manifolds. In this setting, the problems with the inverse of the SRVT do not occur. 
   This is the reason why we conjecture that the sketch given in \ref{setup: inverse SRVT} will actually yield a continuous inverse to the $\SRVT$ (we refer to \cite[Lemma 6.3]{hgmeasure16} for the corresponding results in the Lie group setting).

 \subsection*{Lie groups: Transport via Maurer-Cartan form}\addcontentsline{toc}{subsection}{Lie groups: Transport via Maurer-Cartan form}
 In \cite{hgmeasure16} Gl\"{o}ckner has constructed a Lie group structure on Lie group valued absolutely continuous curves. 
 This Lie group structure exists even if the target Lie group is infinite-dimensional.
 In this section we describe how the square root velocity transform for Lie group valued smooth curves from \cite{celledoni15sao} extends to the absolutely continuous setting using Gl\"{o}ckners results.
 
 \begin{setup}[{\cite[Proposition 4.2 and Lemma 4.8 (c)]{hgmeasure16}}]\label{setup:LG}
  Let $G$ be a Banach Lie group. Then $\AC (I,G)$ is a group under pointwise multiplication.
  For the Lie algebra $\Lf (G)$ of $G$ the space  $\AC (I,\Lf (G))$ with the pointwise Lie bracket is a Lie algebra.
  Then one can prove: 
  
  \emph{There is a unique Lie group structure on $\AC (I,G)$ with Lie algebra $\AC (I,\Lf(G))$ such that $\AC (I,U) := \{\eta \in \AC(I,G)\mid \eta (I) \subseteq U\}$ is open in $\AC(I,G)$ and $\phi_* \colon \AC (I,U) \rightarrow \AC (I,V)$ is a diffeomorphism for each chart $\phi \colon U \rightarrow V$ of $G$ around the identity with $U = \{ g^{-1} \mid g \in U\}$. 
  Further, the inclusion $C^\infty (I,G) \rightarrow \AC (I,G)$ is a morphism of Lie groups.}\footnote{cf. \cite{MR1934608} for the Lie group structure on $C^\infty (I,G)$}.  
 \end{setup}
 
  \begin{remark}
  Note that the construction of the Lie group $\AC(I,G)$ as in \ref{setup:LG} is only a special case of the constructions outlined in \cite{hgmeasure16}.
  For example, one can consider more general target Lie groups or other regularities (e.g.\ derivatives in $L^p$). These results are beyond the scope of the present paper.
  \end{remark}
  
 On first glance, the manifold structure on the curves depends on the Lie group structure of the target manifold. 
 However, we will now argue that the manifold structure can be obtained (for finite-dimensional targets) as a special case of the manifold structure of $\AC (I,M)$ from Theorem \ref{thm: ACman}.
 
 \begin{proposition}\label{prop: HB:Lie}
  Let $G$ be a Hilbert Lie group. Then the manifold structure from Theorem \ref{thm: ACman} turns $\AC^p (I,G)$ with the pointwise operations into a Hilbert Lie group.
  Moreover, this Lie group structure coincides with the one from \ref{setup:LG}.
 \end{proposition}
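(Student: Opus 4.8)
The plan is to verify that $\AC^p(I,G)$ with the manifold structure of Theorem \ref{thm: ACman} is a Lie group, and then to identify this Lie group with the one from \ref{setup:LG} by exhibiting compatible charts. For the first part, recall that every Hilbert Lie group $G$ is a strong Riemannian manifold: equip $G$ with the left-invariant Riemannian metric induced by an inner product on $\Lf(G)$, which is strong precisely because $\Lf(G)$ is a Hilbert space. Hence Theorem \ref{thm: ACman} applies and produces a Banach (indeed Hilbert) manifold structure on $\AC^p(I,G)$. To see that multiplication $\mu_* \colon \AC^p(I,G)\times\AC^p(I,G)\to\AC^p(I,G)$, $(\gamma,\eta)\mapsto(t\mapsto\gamma(t)\eta(t))$, and inversion $\iota_* \colon \gamma\mapsto(t\mapsto\gamma(t)^{-1})$ are smooth, I would use the remark following Proposition \ref{prop: postcom:sm} identifying $\AC^p(I,G\times G)\cong\AC^p(I,G)\times\AC^p(I,G)$ together with Proposition \ref{prop: postcom:sm} itself: since group multiplication $\mu\colon G\times G\to G$ and inversion $\iota\colon G\to G$ are smooth maps between strong Riemannian manifolds (with suitable local additions — one may, by the final independence clause of Theorem \ref{thm: ACman}, even choose convenient ones), the pushforwards $\mu_*$ and $\iota_*$ are smooth. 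This gives a Banach Lie group structure on $\AC^p(I,G)$ with underlying manifold that of Theorem \ref{thm: ACman}.

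For the identification with the Lie group of \ref{setup:LG}, the strategy is to show that the two manifold structures on the set $\AC^p(I,G)$ coincide; since both group operations are the pointwise ones, equality of the smooth structures then forces equality as Lie groups. I would argue this locally around the constant curve $e$ (the identity), and then use left translations — which are diffeomorphisms for both structures — to cover all of $\AC^p(I,G)$. Pick a chart $\phi\colon U\to V\subseteq\Lf(G)$ of $G$ centred at $e$ with $U=U^{-1}$, as in \ref{setup:LG}. On the one hand, \ref{setup:LG} declares $\phi_*\colon\AC^p(I,U)\to\AC^p(I,V)$ to be a diffeomorphism onto an open set. On the other hand, by Theorem \ref{thm: ACman} the chart domains $U_f$ and maps $\Phi_f$ built from any local addition give the structure of Theorem A; taking $f=e$ the constant curve and observing that $\phi$ itself can be used to manufacture a local addition near $e$ (via $\Sigma(v_x) = x\cdot\phi^{-1}(\text{something linear in }v)$, or more cleanly: transport the question through $\phi$ so that both charts become, up to a fibrewise linear diffeomorphism depending smoothly on the base point, the canonical chart of Proposition \ref{prop: postcom:sm}), one sees that $\Phi_e$ and $\phi_*$ differ by a diffeomorphism of open subsets of the model space $\AC^p(I,\Lf(G))$. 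Concretely, the change-of-chart $\phi_*\circ\Phi_e^{-1}$ is a pushforward $(\theta)_*$ of a smooth fibre-preserving map $\theta$ on a pullback bundle, hence smooth by Proposition \ref{prop: ACOmega:Rep} (equivalently Theorem \ref{prop: ACOmega:Rep}) used in the excerpt, and similarly for its inverse. Therefore the identity map between the two manifold structures is, locally near $e$, a diffeomorphism.

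To globalise, note left translation $L_\sigma\colon\AC^p(I,G)\to\AC^p(I,G)$ for $\sigma\in\AC^p(I,G)$ is smooth for the Theorem \ref{thm: ACman}-structure (it is $\mu_*$ with one slot fixed, smooth by the above) and is by construction smooth — indeed the defining property — for the \ref{setup:LG}-structure; likewise $L_{\sigma^{-1}}$. Since the translates of $\AC^p(I,U)$ cover $\AC^p(I,G)$ for both structures, and on each such translate the identity map is a diffeomorphism (conjugate the local statement near $e$ by the relevant left translation), the identity $\AC^p(I,G)_{\mathrm{Thm}\,\ref{thm: ACman}}\to\AC^p(I,G)_{\ref{setup:LG}}$ is a global diffeomorphism. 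As it is also a group isomorphism (it is literally the identity on the underlying group), the two Lie group structures agree. Finally, the Lie algebra is read off as $T_e\AC^p(I,G)\cong\AC^p_e(I,TG)\cong\AC^p(I,\Lf(G))$ via the bundle isomorphism $i_{TG}$ and left-trivialisation, matching \ref{setup:LG}.

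The main obstacle I anticipate is the bookkeeping in the second paragraph: one must check carefully that the local addition used to build $\Phi_f$ can be chosen compatibly with the group structure (or, if an arbitrary local addition is used, that the independence clause of Theorem \ref{thm: ACman} genuinely lets one switch to the group-adapted one), so that the comparison map $\phi_*\circ\Phi_e^{-1}$ is literally of the pushforward form to which Proposition \ref{prop: ACOmega:Rep} applies. This is a finite-dimensional-target point in spirit (\ref{setup:LG} as quoted builds its chart directly from $\phi$), so one expects no real difficulty beyond diagram-chasing, but it is where the argument must be written with care.
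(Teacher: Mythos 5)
Your proposal is correct and follows essentially the same route as the paper: smoothness of the pointwise operations via Proposition \ref{prop: postcom:sm} and the product identification, then comparison of the two smooth structures near the constant identity curve using a group-adapted local addition, globalised by translations. The "convenient local addition" you gesture at is made concrete in the paper as $\A_G(v_g) = \rho_g \circ \phi^{-1}\circ T\rho_g^{-1}(v_g)$ (the translation-induced local addition of a Lie group), for which one gets $U_{e_G} = \AC(I,U)$ and $\Phi_{e_G} = \phi_*$ on the nose, so no change-of-chart computation via Proposition \ref{prop: ACOmega:Rep} is needed.
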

 
 \begin{proof}
 Notice first that every Hilbert Lie group is a strong Riemannian manifold.\footnote{Use multiplication to take the Hilbert space product on the Lie algebra $\Lf (G) = T_e G$ to a (right) invariant metric.}
  Let $m \colon G \times G \rightarrow G$ and $i \colon G \rightarrow G$ be multiplication and inversion in the Lie group $G$. 
  Then the group operations of $\AC (I,G)$ are given by 
  \begin{align*}
   m_* \colon \AC(I,G\times G) \cong \AC (I,G) \times \AC(I,G) \rightarrow \AC(I,G),\quad i_* \colon \AC(I,G) \rightarrow \AC(I,G).
  \end{align*}
  Thus by Proposition \ref{prop: postcom:sm} the pointwise operations of $\AC(I,G)$ are smooth, whence $\AC(I,G)$ is a (Hilbert) Lie group.
  
  To see that the differentiable structure of $\AC(I,G)$ coincides with the one from \ref{setup:LG}, observe that it suffices to prove that the manifold structures coincide on a neighborhood of the identity element (i.e.\ $e_G \colon I \rightarrow G, \ t \mapsto e$) in $\AC(I,G)$. 
  Choose a chart $\phi \colon U \rightarrow V \subseteq \Lf (G)$ which satisfies the assumptions of \ref{setup:LG}. Without loss of generality we may assume that $\phi (e) = 0 \in \Lf(G)$.
  Now denote by $\rho_g \colon G \rightarrow G$ right-translation by $g$ in $G$.
  Following \cite[42.4]{MR1471480} we obtain a local addition by the following construction 
  \begin{displaymath}
   \A_G \colon TG \supseteq \bigcup_{g\in G} T\rho_g (V) \rightarrow G , \quad T \rho_g(V) \ni v_g \mapsto \rho_g \circ \phi^{-1} \circ T\rho_g^{-1} (v_g).
  \end{displaymath}
  Note that the identity element $e_G$ is contained in $C^\infty (I,G)$. 
  Thus we can construct a canonical chart $\Phi_{e_G} \colon U_{e_G} \rightarrow \AC_{e_G} (I,TG)$ around $e_G$ for the manifold structure $\AC(I,M)$ with respect to the local addition $\A_G$.
  Now the definitions yield $U_{e_G} = \AC (I,U)$ and $\Phi_{e_G} = \phi_*$, whence the manifold structure of $\AC(I,G)$ coincides with the one recalled in \ref{setup:LG}. 
 \end{proof}
 
 Recall that for a Lie group $G$ the left and right multiplication induce Lie algebra valued $1$-forms, the so called left (or right) Maurer-Cartan form.
 If we denote by $L_x$ and $R_x$ the derivative of the left (resp. right) multiplication by $x$ these are given by 
 \begin{align*}
 \kappa^l \colon TG \rightarrow \Lf (G) ,\quad v \mapsto L_{\pi_{TG} (v)}^{-1} (v), \quad \quad 
  \kappa^r \colon TG  \rightarrow \Lf (G) ,\quad v \mapsto R_{\pi_{TG} (v)}^{-1} (v).  
 \end{align*}
 Now the transport map for the square root velocity transform on $\AC^1 (I,G)$ is simply $\kappa^l_*$ or $\kappa^r_*$ (on the $L^1 (I,G \leftarrow TG)$ bundle) .
 To keep the notation as in \cite{celledoni15sao}, let us choose the right Maurer-Cartan form.
 One can establish the smoothness of the transport map using the usual techniques employed so far. However, if one wants to construct an inverse to the SRVT, one is again faced with the problem of solving a differential equation (i.e.\ integrating the derivative),
 Hence we use a different route to establish the SRVT and use Lie theory to turn it into a diffeomorphism.
 
 \begin{setup}[$L^1$-regularity for Lie groups {\cite{hgmeasure16}}]
  A Lie group $G$ with Lie algebra $\Lf (G)$ is called $L^1$-semiregular if for every $\gamma \colon I \rightarrow \Lf (G)$ there is a (necessarily unique) $\eta \in \AC^1 (I,G)$ such that 
  \begin{equation}\label{eq: rderiv}
   \delta^r (\eta) = \kappa^r_* (\dot{\eta}) = \gamma \quad \quad \text{ and } \gamma(0) = e.
  \end{equation}
  If in addition the map $\Evol \colon L^1 (I,\Lf (G)) \rightarrow \AC^1 (I,G)$ which maps $\eta$ to the solution of \eqref{eq: rderiv} is smooth, we call $G$ an $L^1$-regular Lie group. 
  Recently it has been shown that every Banach Lie group is $L^1$-regular (see \cite[Theorem C]{hgmeasure16}.
 \end{setup}
 
 Hence we can extend the square root velocity transform for Lie group valued smooth curves from \cite{celledoni15sao} to an SRVT on absolutely continuous curves.
 
 \begin{setup}[SRVT for Lie group valued absolutely continuous curves]
  Let $G$ be a Hilbert Lie group\footnote{Gl\"{o}ckner's theorem allows us to extend the SRVT on absolutely continuous curves even to Banach Lie groups. However, the smooth theory breaks down beyond the Hilbert case (see \cite{celledoni15sao}), whence we have no need for this generality.}.
  Then we define a square root velocity transform 
    \begin{displaymath}
     \SRVT \colon \AC^1 (I,G) \rightarrow L^2 (I,\Lf (G)) , \quad \gamma \mapsto \text{sc} (\delta^r (\gamma))
    \end{displaymath}
  Let now $\AC^1_* (I,G)$ be the closed submanifold of all curves starting at the identity element in $G$, then the SRVT induces a homeomorphism $\AC^1_* (I,G) \rightarrow L^2 (I,\Lf (G))$ whose inverse is given by $ L^2 (I,\Lf (G)) \rightarrow \AC^1_*(I,G), \ \eta \mapsto \Evol (\text{sc}^{-1} (\eta))$.
  Hence for Lie groups valued curves we recover the properties of the SRVT from the vector valued case. 
 \end{setup}

 \textbf{Acknowledgement} This research was supported by the European Unions Horizon 2020 research and innovation programme under the Marie Sk\l{}odowska-Curie grant agreement No.\ 691070. The author wishes to thank E.\ Celledoni and R.\ Dahmen for helpful discussions on the subject of this work.
 
 \begin{appendix}
  \section{Auxiliary results on absolutely continuous curves}\label{App: abscont}
  
   Let us first recall some basic facts on Lebesgue spaces and absolutely continuous curves.
   Our exposition here is inspired by \cite{hgmeasure16}.
   
 \begin{setup}{Conventions}
 In the following we will always assume that $a< b$ are real numbers and $(E,\norm{\cdot})$ is a Banach space and $p \in [1,\infty[$.
 $M$ will always be a Banach manifold.
 \end{setup}

 \begin{defn}\label{defn: L2}
  We let $\mathcal{L}^p ([a,b], E)$ be the set of all measurable functions $\gamma \colon [a,b] \rightarrow E$ (with respect to the Borel $\sigma$-algebras on $[a,b]$ and $E$) such that 
  \begin{displaymath}
   \norm{\gamma}_{p} \coloneq \sqrt[p]{\int_a^b \norm{(\gamma(t))}^p \dd t} < \infty
  \end{displaymath}
  For $\gamma \in \mathcal{L}^p ([a,b], E)$ we denote by $[\gamma]$ the equivalence class with respect to the relation $\gamma \sim \gamma'$ if and only if $\gamma (t) = \gamma' (t)$ almost everywhere.
  Define $L^p([a,b],E) = \{[\gamma] \mid \gamma \in  \mathcal{L}^p ([a,b], E) \}$. Then the quotient topology induced by the canonical map $ \mathcal{L}^p ([a,b], E) \rightarrow L^p([a,b],E)$ turns $L^p([a,b],E)$ into a Banach space with respect to $ \norm{\cdot}_{p}$ (cf.\ e.g. \cite[Lemma 1.19]{hgmeasure16}).
 \end{defn}

 \begin{remark}
 \begin{enumerate}
  \item The inclusions $C^\infty ([a,b],E) \subseteq C^0 ([a,b],E) \subseteq L^p([a,b],E)$ are continuous with respect to the compact-open ($C^\infty$) topology on $C^0 ([a,b],E)$ (and $C^\infty ([a.b],E)$, respectively).
        Since step functions of the form $\sum_{i=1}^n c_i 1_{X_i}$ with $c_i \in E$ and $X_i \subseteq [a,b]$ measurable, are dense in $L^p([a,b], E)$, the usual proof using mollifiers shows that $C^\infty ([a,b],E)$ is dense in $L^p([a,b],E)$. 
  \item If $(E,\langle \cdot , \cdot \rangle)$ is a Hilbert space, or more specially if $E$ is finite-dimensional, $L^2 ([a,b],E)$ is again a Hilbert space with respect to the $L^2$-inner product 
    \begin{displaymath}
     \langle f,g\rangle_{L^2} \coloneq \int_a^b \langle f(t),g(t)\rangle \dd t.
    \end{displaymath}
 \end{enumerate}\label{rem: bettertop}
 \end{remark}
 
 \begin{defn}\label{defn: AC}
  Choose $t_0 \in [a,b]$ and define $\AC^p ([a,b],E) \subseteq C([a,b],E)$ as the space of all continuous functions $\eta \colon [a,b]\rightarrow E$ for which there exists $[\gamma] \in L^p ([a,b],E)$ such that 
    \begin{equation}\label{eq: aederiv}
    \eta (t)  =  \eta (a) + \int_{a}^t \gamma (s) \dd s \quad \quad \forall t \in [a,b] .
    \end{equation}
  Here the integral denotes the weak integral (i.e.\ it is defined as the unique element which satisfies $\lambda \left( \int_{t_0}^t \gamma (s) \dd s\right) =  \int_{t_0}^t \lambda \circ\gamma (s) \dd s$ for all continuous linear $\lambda \colon E \rightarrow \R$.
  Recall that $\eta$ is then almost everywhere differentiable with derivative $\eta' = [\gamma]$ (cf.\ \cite[Lemma 1.28]{hgmeasure16}). 
   We call $\AC^p ([a,b],E)$ the space of \emph{absolutely continuous curves} (with derivative in $L^p$) and use \eqref{eq: aederiv} to turn $\AC^p ([a,b],E)$ into a Banach space such that  
    \begin{displaymath}
     \Phi \colon \AC^p([a,b],E) \rightarrow E \times L^p ([a,b],E) ,\quad  \eta \mapsto (\eta(a) , \eta')
    \end{displaymath}
  is an isomorphism of Banach spaces. The associated norm on $\AC^p([a,b],E)$ is given by $\norm{c}_{p,1} \coloneq \norm{c(a)} + \norm{\dot{c}}_p$. 
  Again, if $E$ is a Hilbert space so is $\AC^2([a,b],E)$.  
 \end{defn} 
   
   \begin{lem}\label{AC:Cinfty}
    There is a constant $R>0$ such that the supremum norm 
    \begin{displaymath}
     \norm{c}_\infty \coloneq\sup_{t \in [a,b]}\norm{c (t)} \text{ satisfies }  \norm{\cdot}_\infty \leq R\norm{\cdot}_{p,1}.
    \end{displaymath}
   Hence $(\AC^p([a,b],E),\norm{\cdot}_{p,1}) \rightarrow (C([a,b],E),\norm{\cdot}_\infty)$ is continuous and the topology of $\AC^p([a,b],E)$ is finer than the compact open topology.
   In particular, for every $V \subseteq E$ open, the subset $$\AC^p ([a,b] ,V) \coloneq \{\gamma \in \AC^p([a,b],E) \mid \gamma([a,b]) \subseteq V\}$$ is open in $\AC^p([a,b],E)$.
   \end{lem}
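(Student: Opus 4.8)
The plan is to read off the estimate directly from the integral representation \eqref{eq: aederiv} together with H\"older's inequality, and then deduce the two topological consequences formally.

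First I would fix $c \in \AC^p([a,b],E)$ with derivative $[\dot c] \in L^p([a,b],E)$. For every $t \in [a,b]$ the defining identity $c(t) = c(a) + \int_a^t \dot c(s)\,\dd s$, combined with the standard estimate $\norm{\int_a^t \dot c(s)\,\dd s} \leq \int_a^t \norm{\dot c(s)}\,\dd s$ for the weak (Bochner/Pettis) integral, gives $\norm{c(t)} \leq \norm{c(a)} + \norm{\dot c}_1$. Next I would apply H\"older's inequality on the compact interval $[a,b]$ to bound $\norm{\dot c}_1 \leq (b-a)^{1-1/p}\,\norm{\dot c}_p$, which is trivial when $p = 1$. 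Setting $R \coloneq \max\{1,(b-a)^{1-1/p}\}$ then yields $\norm{c(t)} \leq R\bigl(\norm{c(a)} + \norm{\dot c}_p\bigr) = R\,\norm{c}_{p,1}$ for all $t$, and taking the supremum over $t$ gives $\norm{c}_\infty \leq R\,\norm{c}_{p,1}$.

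Since the inclusion $\iota \colon \AC^p([a,b],E) \to C([a,b],E)$ is linear, this estimate shows that $\iota$ is bounded, hence continuous, when the target carries the supremum-norm topology; on a compact domain the latter agrees with the compact-open topology, so the $\AC^p$-topology is finer than the compact-open topology. For the last claim, given $V \subseteq E$ open, one way is to observe that $\AC^p([a,b],V)$ is precisely $\iota^{-1}$ of the compact-open subbasic set $\{h \in C([a,b],E) \mid h([a,b]) \subseteq V\}$, which is open by continuity of $\iota$. More concretely, for $\gamma \in \AC^p([a,b],V)$ the image $\gamma([a,b])$ is compact and hence lies at positive distance $\delta$ from $E \setminus V$, so by the estimate above the $\norm{\cdot}_{p,1}$-ball of radius $\delta/R$ around $\gamma$ is contained in $\AC^p([a,b],V)$.

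There is essentially no obstacle here; the only point requiring a little care is that the inequality $\norm{\int \dot c}\leq \int\norm{\dot c}$ for the weak integral, and H\"older's inequality, are both available in the Banach-space-valued setting, which is standard (and recorded in the references for $L^p$-spaces of vector-valued functions).
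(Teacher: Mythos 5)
Your argument is correct and follows essentially the same route as the paper: the integral representation of $c$ plus H\"older's inequality give $\norm{c}_\infty \leq \norm{c(a)} + R\norm{\dot c}_p \leq R\norm{c}_{p,1}$, from which the topological statements follow formally. Your treatment of the final claim about $\AC^p([a,b],V)$ is slightly more explicit than the paper's (which leaves it implicit), but nothing differs in substance.
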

   
   \begin{proof}
    Observe that for $c \in \AC^p ([a,b],E)$ we have by H\"{o}lders inequality
    \begin{displaymath}
     \norm{c(t_1) - c(t_0)} \leq \int_{t_0}^{t_1} \norm{\dot{c}(t)} \dd t \leq R\norm{c}_p
    \end{displaymath}
    for some constant $R\geq 1$ which only depends on $[a,b]$. 
    Hence $\norm{c}_\infty \leq \norm{c(a)} + R \norm{\dot{c}}_p \leq R \norm{c}_{p,1}$.
   \end{proof}

   \begin{remark}
    \begin{enumerate}
     \item Remark \ref{rem: bettertop} (a) implies that $C^\infty ([a,b],E) \subseteq \AC^p([a,b],E)$ is a dense subset.
     \item A curve in $\AC^p ([a,b],\R)$ is absolutely continuous in the usual sense, i.e.\ for each $\varepsilon >0 $ exists $\delta > 0$ with $\sum_{j=1}^n |\eta (b_j)-\eta (a_j)| < \varepsilon$
   for all $n \in \N$ and disjoint subintervals $]a_i,b_i[$ of $[a,b]$ of total length $\sum_{j=1}^n (b_j -a_j) < \delta$ (see \cite[Remark 3.8]{hgmeasure16}).
    \end{enumerate}\label{rem:dense}
   \end{remark}

  Finally, we recall that for $f \colon V \rightarrow F$ smooth (and $F$ a Banach space), we have $f \circ \gamma \in\AC([a,b],F)$ for all $\gamma \in \AC([a,b],V)$.
  The resulting map 
    \begin{displaymath}
     f_* \colon \AC ([a,b],V) \rightarrow \AC ([a,b],F), \gamma \mapsto f\circ \gamma
    \end{displaymath}
 is even smooth (see \cite[Lemma 3.28]{hgmeasure16}).

   \begin{remark}\label{rem: ISO}
 The definition of the spaces $\AC^p ([a,b],E)$ and their topology used an integral depending on $a$.
 One can show that neither the definition nor the topology depend on this choice. Instead, one could replace $a$ in both cases by any $t_0 \in [a,b]$ without changing the space or its topology (cf.\ \cite[Remark 3.10]{hgmeasure16}).
 \end{remark}
  \subsection*{Topologies on sections over absolutely continuous curves}\addcontentsline{toc}{subsection}{Sections of vector bundles}
   In this section we recall the construction of topologies on spaces of absolutely continuous sections.
   Note that the classical arguments have to be adapted for infinite-dimensional target manifolds.
   To simplify the notation let us agree on the following Conventions:
   
   \begin{setup}
   Throughout this section, $(M,G)$ will be a strong Riemannian manifold with Levi-Civita covariant derivative $\nabla$. 
   For $c \in \AC^p (I,M)$ we use the shorthand $\nabla_c \coloneq \nabla_{\frac{\dd c}{\dd t}}$.\footnote{Note that the usual proof \cite[VIII \S 3 Theorem 3.1]{MR1666820} covariant derivatives along an absolutely continuous curve make sense for lifts over that curve and that $\nabla_c \eta$ is an $L^p$-curve in $TM$ over $c$ if $\eta$ is an $\AC^p$-curve. }  
   Further, we let $M$ be modelled on the Banach space $(E,\norm{\cdot})$ and fix $p \in [1,\infty[$. 
   \end{setup}

  Our aim here is to topologise the spaces $\AC_\gamma^p ([a,b],TM)$ and $L^p_\gamma ([a,b], M \leftarrow TM)$ from Definition \ref{defn: asec} and \ref{defn:L2sect}.
  Using the Riemannian structure of $M$ we let $\norm{\cdot}_{x}$ be the norm induced on $T_xM$ by $G$ for $x \in TM$.
  Then we can define 
  \begin{align}
   \norm{X}_{\gamma , p,0} &:= \left(\int_a^b \norm{X(t)}_{\gamma(t)}^p \dd t\right)^{\frac{1}{p}}  \quad \text{for } X \in L^p_\gamma ([a,b], M\leftarrow TM) \label{IP: L2}\\
   \norm{X}_{\gamma , p,1} &= \norm{X(a)}_{\gamma(a)} +  \norm{\nabla_\gamma X}_{\gamma , p,0}\quad \text{for } X \in \AC_\gamma^p ([a,b],TM) \label{IP: AC1}
  \end{align}
 We will show that these mappings are norms turning the sections into Banach spaces. 
  Finally,  
 $$\norm{X}_{\infty,\gamma} := \sup_{t\in [a,b]} \norm{X(t)}_{\gamma(t)} \quad X \in \AC_\gamma^p ([a,b],B).$$
 We will now construct a bundle trivialisation which induces an isometry of the above norms to certain Banach spaces. 
 Hence, we see that a posteriori the space of sections with these norms are Banach/normed spaces.
 
 Let $c \in \AC^p ([a,b],M)$ and denote for $t_0,t_1 \in [a,b]$ by $P^c_{t_0,t_1} \colon T_{c(t_0)}M \rightarrow T_{c(t_1)}M$ parallel transport along $c$ in the tangent bundle (with respect to the Riemannian structure). 
 Since it is non-standard, let us assure that parallel transport along absolutely continuous curves is indeed well-defined and an isometry.\footnote{The class of absolutely continuous curves is the most general class for which this makes sense.} As explained in \cite[II. Satz 2.2]{FlaKli72} the usual proof (see e.g.\ \cite[VIII. \S 3]{MR1666820} carries over to $\AC^p$ curves by virtue of a suitable solution theory for ODE's (see \cite[Section 30]{MR1417259} for results on Banach spaces ). 
 Using parallel translation, one can show as in \cite[II. Satz 2.3 and Bemerkung 2.4]{FlaKli72} the following (the local descriptions are available by \cite[VIII \S 2]{MR1666820} and the ODE solution theory can be found in \cite[Section 30]{MR1417259}.).  
 
 \begin{setup}\label{top:sectsp} Let $c \in \AC^p ([a,b],M)$ and $(M,G)$ a strong Riemannian manifold $(M,G)$.
  \begin{enumerate}
  \item Then the maps
  \begin{align*}
   Q_{c,p,0} \colon (L^p_{c} ([a,b],M \leftarrow TM), \norm{\cdot}_{c,p,0}) &\rightarrow L^p ([a,b], T_{c(a)}M),\\ X &\mapsto (t\mapsto P_c|_{[t,a]} \circ X(t))\\
   Q_{c,p,1} \colon (AC^p_{c} ([a,b],TM), \norm{\cdot}_{c,p,1}) &\rightarrow \AC^p ([a,b], T_{c(a)}M),\\ X &\mapsto (t\mapsto P_c|_{[t,a]} \circ X(t))
  \end{align*}
  make sense and we have $\frac{\dd}{\dd t} \circ Q_{c,p,1} = Q_{c,p,1} \circ \nabla_c$.
  Thus the maps $Q_{c,p,i}, i=1,2$ become isomorphisms of Banach spaces.
  \item Further, $Q_{c,p,1}$ induces an isomorphism of normed spaces 
  $$(\AC_c^p ([a,b],TM), \norm{\cdot}_{\infty,c}) \rightarrow (\AC^p ([a,b], T_{c(a)}M), \norm{\cdot}_{\infty}).$$
  \end{enumerate}
  Then for each $\varepsilon >0$ the set $B^\infty_\varepsilon (X) := \{ Y \in \AC_c^p ([a,b],B) \mid \norm{Y-X}_{\infty,c} < \varepsilon\}$ is open in $(AC_{c}^p ([a,b],TM), \norm{\cdot}_{c,p,1})$.
  In particular, Lemma \ref{AC:Cinfty} shows that the topology of $AC_{c}^p ([a,b],TM)$ is finer than the one of $(\AC_c^p ([a,b],TM), \norm{\cdot}_{\infty,c})$.
  \end{setup}
  
  \begin{lem}
   Let $(M,G)$ be a strong Riemannian manifold, $p \in [1,\infty[$. The topologies of 
    \begin{enumerate}
     \item $(L^p_{c} ([a,b],M \leftarrow TM),  \norm{\cdot}_{c,p,0})$, $(\AC^p_{c} ([a,b],TM), \norm{\cdot}_{\infty,c})$ and
     \item $(\AC^p_{c} ([a,b],TM), \norm{\cdot}_{c,p,1})$
    \end{enumerate}
  constructed in \ref{top:sectsp} do not depend on the choice of the Riemannian structure.
  \end{lem}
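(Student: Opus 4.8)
The plan is to fix, alongside $G$, a second strong Riemannian metric $G'$ on $M$ (same smooth structure; the curve $c$ and the exponent $p$ stay fixed), and to show that the identity maps between the $G$- and the $G'$-versions of the three normed spaces are topological isomorphisms. Note first that the underlying sets $L^p_c(I,M\leftarrow TM)$ and $\AC^p_c(I,TM)$ from Definitions \ref{defn:L2sect} and \ref{defn: asec} are defined via bundle trivialisations of $TM$ and involve no metric; likewise, since the fibre norms at $c(a)$ will turn out to be equivalent (see below), $\AC^p(I,T_{c(a)}M)$ and $L^p(I,T_{c(a)}M)$ are the same sets whether built with $\|\cdot\|^G_{c(a)}$ or with $\|\cdot\|^{G'}_{c(a)}$. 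Writing $P^{G,c}_{s,t}$, $P^{G',c}_{s,t}$ for parallel transport along $c$ with respect to the two Levi-Civita connections and $Q^G_{c,p,i}$, $Q^{G'}_{c,p,i}$ for the isometries of \ref{top:sectsp}, the relevant identity map is conjugate, via these isometries, to the multiplication operator $\Theta_i$ which sends a $T_{c(a)}M$-valued curve $Y$ to $t\mapsto A(t)\,Y(t)$, where
\[
 A(t) \coloneq P^{G',c}_{t,a}\circ P^{G,c}_{a,t}\in\mathcal{L}(T_{c(a)}M), \qquad A(a)=\id .
\]
It therefore suffices to prove that $\Theta_i$ is bounded with bounded inverse for $i=0$ (on $L^p$), on $(\AC^p,\|\cdot\|_\infty)$, and for $i=1$ (on $\AC^p$ with the $(p,1)$-norm).

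Two facts supply everything. \textbf{(a)} Because $G$ and $G'$ are \emph{strong}, for every $x\in M$ each of $\|\cdot\|^G_x$, $\|\cdot\|^{G'}_x$ induces the natural topology on $T_xM$; hence they induce the same topology, hence they are equivalent norms on the fixed vector space $T_xM$. Moreover, covering the compact set $c(I)$ by finitely many bundle trivialisations and using continuity of $G$, $G'$ and of operator inversion (cf.\ Lemma \ref{lem: normcomp}), one obtains a single constant $C\ge 1$ with $C^{-1}\|v\|^G_{c(t)}\le\|v\|^{G'}_{c(t)}\le C\|v\|^G_{c(t)}$ for all $t\in I$, $v\in T_{c(t)}M$. \textbf{(b)} In local trivialisations, parallel transport along the absolutely continuous curve $c$ solves a linear ODE whose coefficient is an $L^p$-curve (the Christoffel symbols of $c$ are bounded on the compact set $c(I)$ and $\dot c\in L^p$), so by the Banach-space solution theory invoked in \ref{top:sectsp} the maps $t\mapsto P^{G,c}_{a,t}$, $t\mapsto P^{G',c}_{a,t}$, and hence $A$ and $A^{-1}$, are absolutely continuous with derivative in $L^p$ as operator-valued curves; by compactness of $I$, $\sup_{t\in I}\|A(t)\|$ and $\sup_{t\in I}\|A(t)^{-1}\|$ are finite.

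For the $L^p$- and the $\|\cdot\|_\infty$-versions only the elementary part is needed: since $P^{G,c}_{a,t}$ is a $G$-isometry and $P^{G',c}_{t,a}$ a $G'$-isometry, (a) gives the pointwise bound $\|A(t)v\|^{G'}_{c(a)}\le C\|v\|^G_{c(a)}$ for all $t,v$, whence $\|\Theta_0 Y\|^{G'}_{c,p,0}\le C\|Y\|^G_{c,p,0}$ and $\|\Theta_0 Y\|^{G'}_{\infty}\le C\|Y\|^G_\infty$; the symmetric estimate for $A(t)^{-1}$ gives the bounded inverse, so $\Theta_0$ is a homeomorphism for both norms. For the $(p,1)$-version one additionally uses (b): by the Leibniz rule $\tfrac{\dd}{\dd t}(A\,Y)=\dot A\,Y+A\,\dot Y$, which lies in $L^p$ because $\dot A\in L^p$ while $Y$ is continuous and bounded (Lemma \ref{AC:Cinfty}) and because $A$ is bounded while $\dot Y\in L^p$; together with $A(a)Y(a)=Y(a)$ and the bounds from (a) and (b) one gets $\|\Theta_1 Y\|^{G'}_{c,p,1}\le C'\|Y\|^G_{c,p,1}$, and the same argument for $A^{-1}$ yields the bounded inverse. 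This proves the lemma. The step I expect to be the genuine obstacle is \textbf{(b)}: establishing, within the Banach-space ODE theory, that parallel transport along an $\AC^p$-curve — and therefore $A$ — is absolutely continuous with $L^p$-derivative as an \emph{operator}-valued curve (with norm-bounded pointwise inverse), since this is the only point where one really leaves the finite-dimensional framework of \cite{FlaKli72,MR1330918}; everything else is bookkeeping with equivalent norms. One could also bypass the operators $A(t)$ entirely and compare the three norms directly in local trivialisations, where the metric enters only through bounded, boundedly invertible multiplier fields and a bounded Christoffel term; the work involved is essentially the same.
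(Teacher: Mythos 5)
Your proof is correct, but it follows a different route from the paper's for the $(p,1)$-part. For the $L^p$- and sup-norms the paper does not pass through parallel transport at all: since $\norm{X}_{c,p,0}$ and $\norm{X}_{\infty,c}$ are defined pointwise via $\norm{X(t)}_{c(t)}$, the uniform fibrewise equivalence of the two metrics over the compact set $c([a,b])$ (your fact (a)) already gives the equivalence of the norms directly; your conjugation by $Q_{c,p,0}$ and the operator $A(t)$ is harmless but an unnecessary detour there. The real divergence is in the $(p,1)$-case. The paper estimates $\norm{\widetilde{\nabla}_c X}_{c,p,0}\le\norm{\nabla_c X}_{c,p,0}+\norm{(\widetilde{\nabla}_c-\nabla_c)X}_{c,p,0}$ and exploits that the difference of two covariant derivatives is a zeroth-order (tensorial) operator, given in charts by the difference of the Christoffel-type terms $B_U-\widetilde{B}_U$; this needs only a pointwise bound on that difference over the compact image and never differentiates parallel transport. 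Your route instead requires that $A(t)=P^{G',c}_{t,a}\circ P^{G,c}_{a,t}$ be $\AC^p$ in operator norm, which you correctly identify as the crux and which does follow from the linear ODE $\dot v=-\Gamma(u)(\dot u,v)$ with $L^p$ coefficient via the Banach-space solution theory already invoked in \ref{top:sectsp} (indeed, $\dot A(t)$ is exactly the conjugate of $(\widetilde B_U-B_U)(c(t);\dot c_U(t),\cdot)$, so the two computations coincide in substance). What your version buys is a cleaner treatment of the fact that $\dot c$ is only $L^p$: you keep $\dot A\in L^p$ and pair it with $\norm{Y}_\infty$, whereas the paper pulls the operator norm of $B_U(c(t);\dot c_U(t),\cdot)$ out as a uniform constant $L$, which strictly speaking requires $\dot c_U$ bounded and should be replaced by an $L^p$-bound as in your argument. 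What the paper's version buys is that no regularity of parallel transport beyond its existence (already needed to define $Q_{c,p,1}$) has to be established. Your closing remark that one could compare the norms directly in trivialisations with a bounded Christoffel term is essentially the paper's proof.
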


  \begin{proof}
   Let $\widetilde{G}$ be another strong Riemannian metric on $M$.
   Since the metrics are strong, for each $t\in [a,b]$ the topologies on $T_{c(t)} M$ induced by $G_{c(t)}$ and by $\widetilde{G}_{c(t)}$ coincide with the natural topology of the tangent space.
   
   \begin{enumerate}
    \item  Hence for every fixed $t$ the norms induced by $G$ and by $\widetilde{G}$ are equivalent.   
   Now we can argue as in \cite[II. Bemerkung 2.4 (iii)]{FlaKli72}: Due to compactness of $c([a,b])$ we take the maximum/minimum over the equivalence constants to see that the norms $\norm{\cdot}_{c,p,0}$ induced by $G$ and $\tilde{G}$ coincide.
   The same argument shows that the norms of type $\norm{\cdot}_{\infty,c}$ are equivalent.
   \item  Let $\widetilde{\nabla}$ be the covariant derivative with respect to $\widetilde{G}$.
  In view of (a), it suffices to prove that for each $X \in \AC_c^p (I, TM)$ we have $\norm{\widetilde{\nabla}_{c} X}_{c,p,0}  \leq C\norm{X}_{c,p,1}$ for some constant $C>0$.
     Now we obtain the estimate 
     \begin{equation} \label{est:eq1} \begin{aligned}
     \norm{\widetilde{\nabla}_{c} X }_{c,p,0} &\leq \norm{\nabla_{c}X}_{c,p,0} + \norm{(\widetilde{\nabla}_{c} -\nabla_{c})X}_{c,p,0}\\
						&\leq \norm{X}_{c,p,1} + \left(\int_{a}^b \norm{(\widetilde{\nabla}_{c} -\nabla_{c})X(t)}_{c(t)}^p\right)^{\frac{1}{p}}	
						\end{aligned}				
   \end{equation}
   Thus if we can bound the integral with some multiple of $\norm{X}_{c,p,1}$, we are done. 
   
   Let us assume at first that $c([a,b])$ is contained in a manifold chart $\kappa \colon U \rightarrow V \subseteq E$. 
   Since $c([a,b])$ is compact, we can choose and fix $k,K >0$ such that for every $t \in [a,b]$ we have
   \begin{align}
    k\norm{T_{c(t)}\kappa (v)} \leq \norm{v}_{c(t)} \leq K \norm{T_{c(t)}\kappa (v)}.\label{norm:equiv}
   \end{align}
   Hence we can replace $\norm{\cdot}_{c(t)}$ with the Hilbert space norm to obtain an estimate.
   To this end, we adopt the notation of \cite[Chapter VIII]{MR1666820}: Local representatives of geometric objects, e.g. a vector field $\xi$, in the chart $\kappa$ will be labeled as $\xi_U$
   Now by \cite[VIII \S 2 and VIII \S 3 Theorem 3.1]{MR1666820} there exists a smooth map $B_U \colon [a,b] \rightarrow \mathcal{B} (E,E),\ t \mapsto B_U (c(t); \dot{c}_U(t) ,\cdot)$ ($\mathcal{B} (E,E), \norm{\cdot}_{\text{op}}$ bounded linear operators with the operator norm) such that 
   $$\nabla_{c} \xi (t) = \dot{\xi}_U (t) - B_U (c(t),\dot{c}_U(t), \xi_U (t)).$$
   We let $\widetilde{B}_U$ be the map defined similarly for $\widetilde{\nabla}$
   Hence in local charts the integral becomes 
   \begin{align*}
    &\int_{a}^b \norm{(\widetilde{\nabla}_{c} -\nabla_{c})X(t)}_{c(t)}^p \\
    \stackrel{\eqref{norm:equiv}}{\leq}& K^p  \int_{a}^b \norm{B_U (c(t),\dot{c}_U(t), X_U (t)) - \widetilde{B}_U (c(t),\dot{c}_U(t), X_U (t))}^p \dd t \\
    \stackrel{\hphantom{\eqref{norm:equiv}}}{\leq}& K^p  \int_{a}^b \left(\underbrace{\norm{X_U(t)}}_{\leq \norm{X_U}_\infty} \underbrace{\norm{B_U (c(t); \dot{c}_U (t) , \cdot) - \widetilde{B}_U (c(t);\dot{c}_U (t) ,\cdot)}_{\text{op}}}_{< L < \infty}\right)^p \dd t\\
    \stackrel{\eqref{norm:equiv}}{\leq}& \left(\frac{KL}{k}\right)^p \norm{X}_{\infty,c}^p \int_a^b \dd t \stackrel{\text{Lemma } \ref{AC:Cinfty}}{\leq}  \left(\frac{KLR}{k}\right)^p (b-a) \norm{X}_{c,p,1}^p 
    \end{align*}
   Here we have used that $[a,b]$ is compact, to construct the constant $\infty > L>0$. The last inequality exploited that $Q_{c,p,1}$ is an isomorphism of Banach spaces. 
   Taking the $p$th root of the above expression and inserting it into \eqref{est:eq1} we obtain the desired estimate.
   Thus we derive the desired estimate if $c([a,b])$ is contained in a chart. 
   For the general case, we simply divide the interval $[a,b]$ into compact subintervalls such that each subintervall is mapped by $c$ into a chart. Repeating the estimate for every subintervall (and noting that the norms over a subintervall are bounded above by the norm over $[a,b]$), we can take the maximum of the estimates to obtain an estimate in the general case. \qedhere
   \end{enumerate}
  \end{proof}

    \begin{remark} For $p=2$ these norms are again induced by the obvious inner products:
   \begin{align}
    \langle X,Y\rangle_0 &:= \int_a^b g_{\gamma(t)} (X(t),Y(t)) \dd t \\
    \langle X,Y\rangle_1 &:= g_{c(a)} (X(a),Y(a)) + \int_a^b g_{\gamma (t)} (\nabla_\gamma X (t),\nabla_\gamma Y(t)) \dd t 
   \end{align}
  \end{remark}

  \begin{setup}[Curves into the Whitney sum]\label{setup: Whitney}
   Let $(M,G)$ be a strong Riemannian metric. Then the Whitney sum $TM \oplus TM \rightarrow M$ (i.e.\ the bundle given fibre-wise by $(TM \oplus TM)_x \coloneq T_xM \times T_xM$) inherits a strong Riemannian metric from $(M,G)$ (i.e.\ the one induced by $G_x \times G_x$ on $T_x M \times T_xM$).
   Fix $c \in C^\infty ([a,b],M)$ and endow $\AC^p ([a,b], T_{c(0)}M \times T_{c(0)}M)$ with the norm $\norm{\cdot}_{c,p,1}$ constructed with respect to the Banach space $T_{c(0)}M \times T_{c(0)}M$.\footnote{As is well known there are many ways to obtain (equivalent) norms on a product of Banach spaces which turn the product into a Banach space. For simplicity we will always choose $\norm{(x,y)}_{E\times F} \coloneq \max \{\norm{x}_E, \norm{y}_F\}$ as norm on the product.} Then $$Q_{c,p,1} \oplus Q_{c,p,1} \colon \AC^p ([a,b], T_{c(0)}M \times T_{c(0)}M) \rightarrow \AC^p_c ([a,b], TM \oplus TM)$$ induces a Banach space structure on $\AC^p_c ([a,b],TM\oplus TM)$ (which follows from arguments as in \ref{top:sectsp}, cf.\ \cite[II. 3.6]{FlaKli72} for more details.).
   Since $\AC^p (I, T_{c(0)}M \times T_{c(0)}M) \cong \AC^p (I, T_{c(0)}M) \times  \AC^p (I, T_{c(0)}M)$, we obtain a canonical identification $\AC^p_c(I, \Omega) \times \AC^p_c (I, TM) \cong \AC^p_c (I,\Omega \oplus TM)$ of Banach spaces. 
   Similarly, we can construct a Banach space $L^p_c ([a,b], M \leftarrow TM\oplus TM)$ using $\norm{\cdot}_{c,p,0}$ and the isomorphism $Q_{c,p,0} \oplus Q_{c,p,0}$.
   \end{setup}

   \begin{proposition}\label{prop: ACOmega:Rep}
  	Let $O \subseteq c^*TM$ be an open set with $O_t \coloneq O \cap (c^*TM)_t \neq \emptyset$ for all $t \in [a,b]$.
  	Further, we fix a smooth fibre-preserving map $F \colon c^*TM \supseteq O \rightarrow g^*TM.$
  	Then the map 
  		\begin{align*}
  		F_* \colon \AC_c^p ([a,b],O) \rightarrow \AC_g^p ([a,b],TM),\quad h \mapsto F \circ h   
  		\end{align*}
  		is smooth (where we suppressed the identification $\AC^p_{\id_I} (I,f^*TM) \cong \AC^p_f(I,TM)$). We let $T_{\text{fib}} F$ be the fibre-derivative of $F$, i.e.\ for $F_t \colon T_{c(t)}M \rightarrow T_{g(t)}M$ the fibre-derivative is $T_{\text{fib}} F \colon TM \oplus TM \rightarrow TM,\ (x,y) \mapsto d F_t (x;y)$ for $x,y \in T_{c(t)} M$. Idenfifying as in \ref{setup: Whitney}, we have $d(F_*) = (T_{\text{fib}}F)_*$.
  \end{proposition}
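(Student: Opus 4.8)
The plan is to reduce, via the parallel transport trivialisations of \ref{top:sectsp}, to Gl\"{o}ckner's $\Omega$-lemma for pushforwards on spaces of Banach space valued absolutely continuous curves. The bundle isomorphism $c^{*}TM \cong [a,b]\times T_{c(a)}M$, $(t,v)\mapsto (t,P^{c}_{t,a}(v))$, underlying the Banach space isomorphism $Q_{c,p,1}\colon \AC^{p}_{c}([a,b],TM)\to \AC^{p}([a,b],T_{c(a)}M)$ of \ref{top:sectsp} (and its analogue $Q_{g,p,1}$ for $g$), carries $O$ to an open set $\widetilde O\subseteq[a,b]\times T_{c(a)}M$ with all fibres nonempty, carries $\AC^{p}_{c}([a,b],O)$ to $\{\xi\in\AC^{p}([a,b],T_{c(a)}M)\mid (t,\xi(t))\in\widetilde O\ \forall t\}$, and carries $F_{*}$ to the graph-type pushforward $\Lambda\colon \xi\mapsto(t\mapsto \widetilde f(t,\xi(t)))$, where $\widetilde f(t,x)\coloneq P^{g}_{t,a}(F_{t}(P^{c}_{a,t}(x)))$. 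Since parallel transport along the smooth curves $c$ and $g$ is smooth jointly in $t$ and in the vector (as already used in \ref{top:sectsp}, cf.\ \cite[VIII. \S 3]{MR1666820}) and $F$ is a smooth fibre-preserving map, $\widetilde f$ is smooth on the open set $\widetilde O$; note that $\widetilde f(t,\cdot)$ need not be linear, which is why the pushforward will be treated by the $\AC$-version of the $\Omega$-lemma rather than the $L^{p}$-version used elsewhere in the paper. Because $Q_{c,p,1}$ and $Q_{g,p,1}$ are linear isomorphisms and the fibre derivative of $\widetilde f$ in its second slot is the parallel-transport conjugate of $T_{\text{fib}}F$, it suffices to prove that $\Lambda$ is smooth with $d\Lambda(\xi;\eta)=(t\mapsto d\widetilde f((t,\xi(t));(0,\eta(t))))$; transporting this back and invoking the Whitney sum identification of \ref{setup: Whitney} then yields the smoothness of $F_{*}$ and the formula $d(F_{*})=(T_{\text{fib}}F)_{*}$.

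To handle $\Lambda$, I would first observe that its domain is open in $\AC^{p}([a,b],T_{c(a)}M)$, since the $\AC^{p}$-topology is finer than the compact-open topology (Lemma~\ref{AC:Cinfty} and \ref{top:sectsp}) and $[a,b]$ is compact. Fix $\xi_{0}$ in the domain and choose $r>0$ with $B^{\infty}_{r}(\xi_{0})$ contained in it. The explicit time-dependence of $\widetilde f$ is absorbed exactly as in the other $\AC$-pushforward arguments of the paper: by Lemma~\ref{lem: ted:ext} there is an open neighbourhood $\widetilde W$ of $\widetilde O$ in $\R\times T_{c(a)}M$, projecting onto an open neighbourhood $W$ of $[a,b]$ in $\R$, together with a smooth extension $\overline f\colon \widetilde W\to T_{g(a)}M$ of $\widetilde f$. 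For $\xi\in B^{\infty}_{r}(\xi_{0})$ the curve $(\text{inc}_{I}^{W},\xi)$ takes values in $\widetilde W$, and $\xi\mapsto(\text{inc}_{I}^{W},\xi)$ is an affine continuous, hence smooth, map into $\AC^{p}([a,b],\widetilde W)$. Post-composing it with the pushforward $\overline f_{*}\colon \AC^{p}([a,b],\widetilde W)\to \AC^{p}([a,b],T_{g(a)}M)$, which is smooth by the $\Omega$-lemma \cite[Lemma 3.28]{hgmeasure16}, produces a smooth map agreeing with $\Lambda$ on $B^{\infty}_{r}(\xi_{0})$, because there $\overline f(t,\xi(t))=\widetilde f(t,\xi(t))$. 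As $\xi_{0}$ was arbitrary, $\Lambda$, and hence $F_{*}$, is smooth.

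The derivative formula drops out of the same construction: \cite[Lemma 3.28]{hgmeasure16} also gives $d(\overline f_{*})(\gamma;\delta)=(t\mapsto d\overline f(\gamma(t);\delta(t)))$, and inserting $\gamma=(\text{inc}_{I}^{W},\xi)$ together with the variation $(0,\eta)$ of the affine inclusion yields the displayed formula for $d\Lambda$, which under $Q_{c,p,1}$, $Q_{g,p,1}$ and \ref{setup: Whitney} is exactly $(T_{\text{fib}}F)_{*}$. I expect the only genuine obstacle to be this parametrisation step: Gl\"{o}ckner's $\Omega$-lemma is stated for a single fixed smooth map, so one must really pass from $[a,b]$ to an open subset $W\subseteq\R$ by means of Lemma~\ref{lem: ted:ext} and then check that joint smoothness of parallel transport in all its variables, openness of the tubes $B^{\infty}_{r}(\xi_{0})$, and the various Banach space identifications all fit together coherently; the rest is routine bookkeeping with the isomorphisms $Q_{c,p,i}$.
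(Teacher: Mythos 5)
Your overall strategy is exactly the paper's: conjugate $F_*$ by the parallel-transport isomorphisms $Q_{c,p,1}$, $Q_{g,p,1}$ of \ref{top:sectsp}, absorb the explicit time-dependence by extending from $[a,b]$ to an open neighbourhood $W\subseteq\R$ via Lemma \ref{lem: ted:ext}, invoke Gl\"{o}ckner's pushforward result to get smoothness, and read off $d(F_*)=(T_{\text{fib}}F)_*$ from the chain rule together with fibrewise linearity of parallel transport. Your device of using the plain $\Omega$-lemma \cite[Lemma 3.28]{hgmeasure16} precomposed with the affine inclusion $\xi\mapsto(\text{inc}_I^W,\xi)$, where the paper instead cites the parametrized version \cite[Lemma 3.30]{hgmeasure16} for $\eta\mapsto H\circ(\id_{[a,b]},\eta)$, is an equivalent and perfectly acceptable variant.

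There is, however, one step that does not go through as written: you apply Lemma \ref{lem: ted:ext} to produce a smooth extension of $\widetilde f$ to \emph{an open neighbourhood of all of $\widetilde O$} in $\R\times T_{c(a)}M$. That lemma only extends maps defined on a \emph{product} $[a,b]\times U$ with $U$ open in a Banach space (its proof collars the boundary $\{a,b\}\times U$), and $\widetilde O$ is a general open subset of $[a,b]\times T_{c(a)}M$ -- typically a twisting tube around the graph of $Q_{c,p,1}(\xi_0)$ -- so no such global extension is provided by the lemma, and the hypothesis $O_t\neq\emptyset$ for all $t$ does not yield a product $[a,b]\times U\subseteq\widetilde O$ containing that graph. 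This is precisely the point at which the paper inserts the translation $\tau_\xi\colon\gamma\mapsto\gamma+\xi$: after translating, $(P^c_{0,\cdot})^{-1}$ carries the tube $B^\infty_r(\xi)$ onto the genuine product $[a,b]\times B_r^{G_{c(0)}}(0)$, and Lemma \ref{lem: ted:ext} is applied only there. So you should either translate first (and then your auxiliary map acquires the summand $(P^c_{0,t})^{-1}(\xi_0(t))$, exactly as in the paper's map $h$ in \eqref{eq: aux:smoothness}), or subdivide $[a,b]$ into finitely many subintervals on which $Q_{c,p,1}(\xi_0)$ oscillates by less than $r/3$, extend over the resulting product pieces, and patch. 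The rest of your argument, including the derivative computation, is sound once this localisation is repaired.
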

  
  \begin{proof}
 	    Since smoothness is a local property, it suffices to prove that $F_*$ is smooth on a neighborhood $W_\xi$ for each $\xi \in \AC^p_c ([a,b],O)$.
  		Since $\xi ([a,b])$ is compact and $O$ is open in $c^*TM$, there is $r>0$ such that $W_\xi \coloneq B_r^\infty (\xi) \subseteq \AC^p_c ([a,b],O)$.
  		Now $\AC^p_c ([a,b],O)$ is a Banach space, whence $\tau_\xi \colon \AC^p_c ([a,b],O) \supseteq B_r^\infty (0_c) \rightarrow W_\xi, \gamma \mapsto \gamma +\xi$ is a diffeomorphism (where $0_c$ is the zero section over $c$).
  		Hence it suffices to prove that $F_* \circ \tau_\xi$ is smooth. 
  		Consider the isometries $Q_{c,p,1}$ and $Q_{g,p,1}$ induced by parallel transport (cf.\ \ref{top:sectsp}).
  		Now $(P_{0,\cdot}^c)^{-1} ([a,b] \times B_r^\infty (0_c) )= B_r^{G_{c(0)}} (0) \subseteq T_{c(t)} M$ and we conclude that 
  		$$Q_{g,p,1}^{-1} \circ F_* \circ \tau_\xi \circ Q_{c,p,1} \colon \AC^p_c ([a,b], B_r^{G_{c(0)}} (0)) \rightarrow \AC^p_g ([a,b],T_{g(0)} M).$$
  		To establish smoothness consider the auxiliary mapping 
  		\begin{equation}\label{eq: aux:smoothness}
  		h \colon [a,b] \times B_r^{G_{c(0)}} (0) \rightarrow T_{g(0)}M , \quad (t,x) \mapsto (P^g_{0,t})^{-1} \circ F \circ P^c_{0,t} (x + (P^c_{0,t})^{-1} (\xi(t))).
  		\end{equation}
  		By construction $h$ is smooth and we have $h(t,\eta) = Q_{g,p,1}^{-1} \circ F_* \circ \tau_\xi \circ Q_{c,p,1} (\eta) (t)$.
  		Choose an open neighborhood $W$ of $[a,b]$ together with a smooth extension $H \colon W \times B_{r}^{G_{c(0)}} (0) \rightarrow  T_{g(0)}M$ of $h$ (this is possible by Lemma \ref{lem: ted:ext} below). Now by \cite[Lemma 3.30]{hgmeasure16} the map 
  		\begin{displaymath}
  		\tilde{H} \colon \AC^p ([a,b],B_{r}^{G_{c(0)}} (0)) \rightarrow \AC^p ([a,b],T_{g(0)}M) , \eta \mapsto H \circ (\id_{[a,b]},\eta) 
  		\end{displaymath}
  		is smooth. As $\tilde{H} = Q_{g,p,1}^{-1} \circ F_* \circ \tau_\xi \circ Q_{c,p,1}$, $F_*$ is smooth.
  		
  		To establish the formula for the derivative observe now that by the chain rule 
  		$$d F_* = d(F_* \circ \tau_\xi) = d\left( Q_{g,p,1} \circ \tilde{h} \circ Q_{c,p,1}^{-1}\right) = d Q_{g,p,1} \circ T\tilde{h} \circ TQ_{c,p,1}^{-1}.$$
  		Here we have used that the derivative of the translation $\tau_\xi$ is the identity.
  		As $Q_{c,p,1}$ is a Banach space isomorphism, we have $T Q_{c,p_1} = (Q_{c,p,1} \circ \text{pr}_1, dQ_{c,p,1}) =  Q_{c,p,1} \times Q_{c,p,1}$ (using that $\AC^p ([a,b], E\times F) \cong \AC^p ([a,b], E) \times \AC([a,b],F)$).
  		Now arguing as in \cite[Lemma 3.28]{hgmeasure16} the tangent map $T\tilde{h} = (\tilde{h},d\tilde{h})$ satisfies $d\tilde{h} = \widetilde{d_2 h}$, where 
  		\begin{displaymath}
  		 d_2 h \colon [a,b] \times \left(T_{c(0)} M \times T_{c(0)} M \right) \rightarrow T_{g(0)} M,\ (t,x,y) \mapsto \lim_{s \rightarrow 0} s^{-1} (h(t,x+sy)-h(t,x)),
  		\end{displaymath}
  		and $ \widetilde{d_2 h}$ is constructed analogous to $\tilde{h}$.
  		Let us now compute $d_2 h$. Denote by $F_t \colon B_r^{G_{c(t)}} (0) \rightarrow T_{g(t)}M$ the restriction of the fibre-preserving map $F \circ \tau_\xi$. 
  		As parallel transport is linear in the fibre, we have $d_2 h (t,x;y) = P_{t,0}^{g} (d F_t(P_{0,t}^{c}(x);P_{0,t}^{c} (y)))$. 
  		Thus $\widetilde{d_2h} = Q_{g,p,1}^{-1} \circ (T_\text{fib} F)_*\circ Q_{c,p,1}\times Q_{c,p,1}$.
  		Plugging this into the formula for $d F_*$ we obtain 
  		\begin{displaymath}
  		dF_* = Q_{g,p,1} \circ \left( Q_{g,p,1}^{-1} \circ (T_\text{fib} F)_* \circ Q_{c,p,1} \times Q_{c,p,1}\right) \circ Q_{c,p,1}^{-1} \times Q_{c,p,1}^{-1} = (T_\text{fib} F)_*
  		\end{displaymath}
  \end{proof}

  \begin{remark}\label{rem: ext:ACProP}
   The statement of Proposition \ref{prop: ACOmega:Rep} extends verbatim to smooth fibre-preserving mappings $f^*B \subseteq O \rightarrow g^D$, where $B,D$ is one of the bundles $TM$ or $TM \oplus TM$.
   Using the preparations in \ref{setup: Whitney}, the proof of Proposition \ref{prop: ACOmega:Rep} carries over almost verbatim if one replaces $Q_{c,p,i}$ with $Q_{c,p,i} \oplus Q_{c,p,i}$ ($i \in \{0,1\})$ whenever one of the bundles is of type $TM\oplus TM$.
   Even stronger one can generalise Proposition \ref{prop: ACOmega:Rep} to bundles of the form $f^*B$, where $B \rightarrow M$ is an arbitrary vector bundle with a strong Riemannian metric. Since we have no need for these results, we chose to avoid technicalities and consider only $TM, TM \oplus TM$.
   See however \cite[Lemma 2.3.9.]{MR1330918} for a proof of the full statement in the case $p=2$ and $\dim M < \infty$.
  \end{remark}

    \begin{lem}\label{lem: ted:ext}
   	Let $\gamma \colon [a,b] \times U \rightarrow F$ be a smooth map, where $a<b$, $E,F$ are Banach spaces and $U \subseteq E$ is open. Then there exists an open neighborhood $[a,b] \subseteq W$ together with a smooth extension $\overline{\gamma} \colon V \times U \rightarrow F$ of $F$. 
   	If in addition $E = E_1  \times E_2$ and $U = Z \times E_2$ for $Z  \subseteq E_1$ open and $\gamma(t,z,\cdot)$ is linear for each $(t,z) \in [a,b] \times Z$, we can choose the extension such that $\overline{\gamma}(s,z,\cdot)$ is linear for each $(s,z) \in W \times Z$.
    \end{lem}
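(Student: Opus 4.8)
The plan is to reduce the statement to a one-dimensional extension problem in the $t$-variable. On the interior $(a,b)$ there is nothing to do, so it suffices to produce, separately for each endpoint, a smooth extension of $\gamma$ across that endpoint and then glue the two local extensions by a smooth partition of unity on a suitable open $W\supseteq[a,b]$. By the obvious symmetry I would treat only the right endpoint, and after an affine change of the $t$-variable assume $b=0$ and $a<0$, so that the task becomes: extend $\gamma|_{[a,0]\times U}$ to $(a,\varepsilon)\times U$ for some $\varepsilon>0$.

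For this I would use a Seeley-type extension operator applied in the $t$-variable. Fix a compactly supported smooth $\phi\colon\R\to\R$ with $\phi\equiv1$ near $0$ and $\operatorname{supp}\phi\subseteq[0,|a|]$, together with sequences $(a_k)_{k\in\N}$, $(b_k)_{k\in\N}$ with $b_k\ge1$, $b_k\to\infty$, $\sum_k|a_k|b_k^n<\infty$ for every $n\in\N$ and $\sum_k a_k(-b_k)^n=1$ for every $n\in\N$ (the existence of such data is the classical ingredient of Seeley's theorem, and since $\phi,(a_k),(b_k)$ are scalar and fixed independently of $\gamma$, the resulting operator makes sense verbatim for $F$-valued functions). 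Then set $\overline\gamma(t,x):=\gamma(t,x)$ for $t\le0$ and
\[
  \overline\gamma(t,x):=\sum_{k\in\N}a_k\,\phi(b_k t)\,\gamma(-b_k t,x),\qquad t>0,
\]
on $W_0\times U$ with $W_0=(a,\varepsilon)$. For every fixed $t>0$ the support condition on $\phi$ together with $b_k\to\infty$ forces $\phi(b_k t)=0$ for all but finitely many $k$, and for those $k$ one has $-b_k t\in[a,0)$, so each summand is defined and the sum is finite; away from $t=0$ it is even locally finite, while near $t=0$ the estimates $\sum_k|a_k|b_k^n<\infty$ control convergence of all $t$-derivatives, and the relations $\sum_k a_k(-b_k)^n=1$ guarantee that the one-sided $t$-derivatives of $\overline\gamma$ match those of $\gamma$ along $\{0\}\times U$ to all orders.

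I would then verify the three required properties. (i) Smoothness in $x$ for fixed $t$ holds by hypothesis for $t\le0$ and, for $t>0$, because $\overline\gamma(t,\cdot)$ is a finite linear combination of the smooth maps $\gamma(-b_k t,\cdot)$. (ii) Joint smoothness is clear on $\{t\neq0\}\times U$, where the formula is locally a finite sum of the smooth maps $(t,x)\mapsto a_k\phi(b_k t)\gamma(-b_k t,x)$; at points of $\{0\}\times U$ one runs Seeley's argument, the only new feature being that the derivative estimates must be taken locally uniformly in $x$, which is legitimate since the relevant derivatives of $\gamma$ are continuous, hence locally bounded, on $[a,0]\times U$. (iii) The linearity addendum is inherited automatically: if $E=E_1\times E_2$, $U=Z\times E_2$ and $\gamma(s,z,\cdot)$ is linear for $(s,z)\in[a,b]\times Z$, then for fixed $(t,z)$ with $t>0$ the map $\overline\gamma(t,z,\cdot)=\sum_k a_k\phi(b_k t)\gamma(-b_k t,z,\cdot)$ is a finite linear combination of linear maps $E_2\to F$, hence linear, and for $t\le0$ it equals the linear map $\gamma(t,z,\cdot)$. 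Finally, gluing the two endpoint extensions by a partition of unity in $t$ produces $\overline\gamma$ on $W\times U$ with $W\supseteq[a,b]$ open, and all three properties are preserved by the gluing (the linear combination of linear maps in the last variable stays linear).

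The step I expect to be the main obstacle is (ii) at $t=0$: carrying Seeley's scalar derivative estimates through the construction while tracking the Banach-space-valued parameter $x$ and obtaining joint — rather than merely separate — smoothness at the gluing locus $\{0\}\times U$. Everything else (finiteness of the sums, the linearity addendum, and the final partition-of-unity gluing) is routine.
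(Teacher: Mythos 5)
Your proof is correct and takes essentially the same route as the paper: both reduce the problem to a one-variable extension across the endpoints, applied parametrically in the Banach-space variable $x$ (where the linearity addendum is then immediate, since the extension is for each fixed $t$ a finite linear combination of the maps $\gamma(s,\cdot)$). The only difference is one of packaging: the paper invokes a collar for $[a,b]$ and cites the continuous linear section $S$ of the restriction map $C^\infty(\R,\R)\to C^\infty([0,\infty[,\R)$ from Kriegl--Michor (i.e.\ Seeley's operator as a black box, together with their smoothness result for the glued map), whereas you unwind Seeley's explicit series and verify the joint smoothness at the gluing locus by hand -- which, as you note, goes through because the relevant derivatives of $\gamma$ are locally bounded on $[a,b]\times U$.
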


 \begin{proof}
 	Since $[a,b]$ is a closed convex subset of $\R$ we can choose a smooth collar, i.e.\ a diffeomorphism $\theta \colon \{a,b\} \times \R \rightarrow V$ onto an open neighborhood $V \subseteq \R$ of $\{a,b\}$ which is the identity on $\{a,b\} \times \{0\}$ and $\theta (\{a,b\} \times \{t \in \R \mid t \geq 0 \}	)= [a,b] \cap V$ (cf. \cite[Proposition 24.9]{MR1471480}). We set $W \coloneq [a,b] \cup V$ (which is an open neighborhood of the compact interval).
 	Now the smooth map $$\Theta \colon (\{a,b\} \times U) \times \R \rightarrow V \times U, \quad (x,y,t) \mapsto \theta ((x,t),y)$$
 	clearly defines a collar for $[a,b] \times U$ on the open neighborhood $V \times U$ of the boundary.
 	For $x,y \in V \times U$ we observe that $\Theta^{-1} (x,y) = ((p_x,y),t_x)$, where $(p_x,t_x) = \theta^{-1} (x)$.
 	Now recall from \cite[Lemma 16.8]{MR1471480} that the restriction map $C^\infty (\R ,\R) \rightarrow C^\infty ([0,\infty[ , \R)$ admits a continuous linear section $S$.
 	This allows us to define 
 	$
 	 \overline{\gamma} (x,y) \coloneq \begin{cases}
 	 \gamma (x,y) & \text{if } x \in ]a,b[\\
 	 S(\gamma (\Theta (p_x,y,t_x))) & \text{if } x \in V
 	 \end{cases}
 	$
 	.
 	By \cite[Theorem 24.8]{MR1471480}, $\overline{\gamma}$ map makes sense, is smooth and extends $\gamma$ to $W \times U$. 
    The formula for $\overline{\gamma}$ implies that $\overline{\gamma} (s,z,\cdot)$ is linear for all $(s,z) \in W \times Z$ if $\gamma (t,z,\cdot)$ is linear for all $t \in [a,b]$.  
 \end{proof}
    
 \section{Auxiliary results in local charts}
 
 In this appendix we prove several auxiliary results used in Section \ref{sect: MFDabs}. 
 For finite dimensional Riemannian manifolds these results are easy consequences of standard facts which can be found in most entry level textbooks on Riemannian geometry. 
 
 \begin{setup}[Conventions]
  Throughout this appendix, $(M,G)$ will be a strong Riemannian manifold modelled on the Hilbert space $(H,\langle \cdot , \cdot \rangle)$.
  Denote by $\norm{\cdot}$ the norm of $H$ and for $x \in M$ let $\norm{\cdot}_{G,x}$ be the norm induced by $G$ on $T_xM$.
  Let $\kappa \colon U \rightarrow V \subseteq H$ be a manifold chart. 
  Further we fix an absolutely continuous curve $\gamma \colon [a,b] \rightarrow U$ (for $a<b$ real numbers).
  \end{setup}

 \begin{lem}\label{lem: normcomp}
  There exists an open neighborhood $W_\gamma \subseteq U$ of $\gamma ([a,b])$ together with constants $k,K >0$ such that 
  \begin{equation}\label{eq: est:normcomp}
   \forall x \in W_\gamma , v \in T_xM \quad k\norm{T_x \kappa (v)} \leq \norm{v}_{G,x} \leq K \norm{T_x \kappa (v)}
  \end{equation}
 \end{lem}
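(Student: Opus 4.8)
The plan is to pass to the chart $\kappa$, where the Riemannian metric $G$ becomes a smooth family of inner products on the model space $H$. The definition of \emph{strong} says that each of these inner products is pointwise equivalent to $\norm{\cdot}$; the task is therefore to upgrade this pointwise equivalence to one that is uniform on a neighbourhood of the compact set $\gamma([a,b])$, which I would do by a continuity/perturbation argument followed by a finite covering.

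First I would introduce the local representative of $G$: for $y \in V$, writing $x \coloneq \kappa^{-1}(y)$, put $g_y(u,w) \coloneq G_x\bigl((T_x\kappa)^{-1}(u),(T_x\kappa)^{-1}(w)\bigr)$. Then $g_y$ is a symmetric, positive definite, bounded bilinear form on $H$; the assignment $y \mapsto g_y$ is smooth, in particular continuous, into the Banach space of bounded symmetric bilinear forms on $H$ equipped with the norm $b \mapsto \sup\{\,|b(u,w)| : \norm{u},\norm{w} \le 1\,\}$, which I denote $\norm{b}$; and $\norm{v}_{G,x}^2 = g_{\kappa(x)}(T_x\kappa(v),T_x\kappa(v))$ for all $x \in U$, $v \in T_x M$. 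Because $G$ is strong, for each fixed $y$ the form $g_y$ induces the norm topology of $H$, i.e.\ there are $k_y, K_y > 0$ with $k_y\norm{u} \le \sqrt{g_y(u,u)} \le K_y\norm{u}$ for all $u \in H$; one may take $K_y^2 = \norm{g_y}$, while the existence of $k_y$ is precisely the content of strongness (equivalently, the self-adjoint operator representing $g_y$ is invertible with bounded inverse).

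Next I would make the equivalence locally uniform. Fix $y_0 \in V$. Continuity of $y \mapsto \norm{g_y}$ provides $\rho_0 > 0$ with $\norm{g_y} \le \norm{g_{y_0}} + 1 =: K(y_0)^2$ for all $y \in B_{\rho_0}(y_0)$, which yields the upper bound with constant $K(y_0)$ on that ball. For the lower bound, shrink $\rho_0$ so that $\norm{g_y - g_{y_0}} < \tfrac12 k_{y_0}^2$ on $B_{\rho_0}(y_0)$; then for such $y$ and all $u \in H$,
\[
 g_y(u,u) = g_{y_0}(u,u) + (g_y - g_{y_0})(u,u) \ge k_{y_0}^2\norm{u}^2 - \tfrac12 k_{y_0}^2\norm{u}^2 = \tfrac12 k_{y_0}^2 \norm{u}^2,
\]
so the lower bound holds on $B_{\rho_0}(y_0)$ with constant $k(y_0) \coloneq k_{y_0}/\sqrt2$. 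Now $\kappa(\gamma([a,b]))$ is compact as the continuous image of $[a,b]$, hence covered by finitely many balls $B_{\rho_i}(y_i)$, $i = 1,\dots,N$, of the above type. I would set $W_\gamma \coloneq \kappa^{-1}\bigl(\bigcup_{i=1}^N B_{\rho_i}(y_i)\bigr)$, an open neighbourhood of $\gamma([a,b])$ with $W_\gamma \subseteq U$, and put $k \coloneq \min_{1\le i\le N} k(y_i) > 0$ and $K \coloneq \max_{1 \le i \le N} K(y_i)$. For $x \in W_\gamma$ the point $\kappa(x)$ lies in some $B_{\rho_i}(y_i)$, and applying the two bounds there with $u = T_x\kappa(v)$ gives \eqref{eq: est:normcomp}.

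The only step requiring genuine care is the lower bound: this is where strongness is indispensable, since it supplies the pointwise coercivity constant $k_{y_0}$ that a merely weak metric would fail to provide, and the displayed perturbation estimate is what promotes it to a locally uniform bound. The upper bound and the finite patching over the compact set $\gamma([a,b])$ are routine.
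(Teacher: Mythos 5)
Your proof is correct, and it reaches the same conclusion by the same overall strategy (pass to the chart, use continuity of the local representative of $G$, exploit compactness of $\gamma([a,b])$), but the mechanism for the crucial lower bound is genuinely different from the paper's. The paper invokes Lang's representation of a strong metric in a chart: there is a \emph{smooth} map $B \colon U \rightarrow \mathcal{B}^\times(H,H)$ into the invertible bounded operators with $\norm{v}_{G,x} = \norm{B(x)\,T_x\kappa(v)}$, so that $\norm{v}_{G,x} \leq \norm{B(x)}_{\mathrm{op}}\norm{T_x\kappa(v)}$ and $\norm{T_x\kappa(v)} \leq \norm{B(x)^{-1}}_{\mathrm{op}}\norm{v}_{G,x}$; since inversion is smooth on $\mathcal{B}^\times(H,H)$, the functions $x \mapsto \norm{B(x)}_{\mathrm{op}}$ and $x \mapsto \norm{B(x)^{-1}}_{\mathrm{op}}$ are continuous, hence bounded on the compact set $\gamma([a,b])$ by some $r,R$, and $W_\gamma$ is taken to be the intersection of the sublevel sets $f^{-1}(]0,2r[) \cap F^{-1}(]0,2R[)$ rather than a finite union of balls. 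You instead work with the bilinear form $g_y$ directly, extract the pointwise coercivity constant $k_{y_0}$ from strongness, and promote it to a locally uniform one via the perturbation estimate $g_y(u,u) \geq k_{y_0}^2\norm{u}^2 - \norm{g_y - g_{y_0}}\,\norm{u}^2$, followed by a finite subcover. Your route is more elementary and self-contained (it avoids citing the operator representation and the smoothness of operator inversion, using only openness of the set of coercive forms), at the cost of a slightly more hands-on covering argument; the paper's route packages the same openness into the cited facts about $\mathcal{B}^\times(H,H)$ and gets the neighbourhood $W_\gamma$ for free as a sublevel set. Both arguments are complete; the only cosmetic point in yours is to note explicitly that the balls $B_{\rho_i}(y_i)$ are chosen inside $V$, so that $W_\gamma \subseteq U$.
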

 
 \begin{proof}
  Recall from \cite[VII \S 2 Proposition 2.5 and \S 3 Theorem 3.1]{MR1666820} that there is a smooth map $B \colon U \rightarrow \mathcal{B}^\times (H,H),\ x \mapsto B_x$ (where $\mathcal{B}^\times (H,H)$ is the Banach space of invertible bounded linear mappings) such that 
  \begin{align*}
   \norm{v}_{G,x} = \norm{B(x)T_x\kappa (v)} \leq \norm{B(x)}_{\text{op}} \norm{T_x \kappa (v)} \quad \forall x \in U ,\ v \in T_x M, \text{ whence }\\ 
   \norm{T_x \kappa (v)} = \norm{B(x)^{-1}B(x)T_x \kappa (v)} \leq \norm{B(x)^{-1}}_{\text{op}}  \norm{v}_{G,x}
  \end{align*}
  As inversion in $\mathcal{B}^\times (H,H)$ is smooth (see \cite[I \S 3 Proposition 3.9]{MR1666820}), the maps $f \colon U \rightarrow \R,\ f(x) \coloneq \norm{B(x)}_{\text{op}}$ and $F\colon U \rightarrow \R,\ F(x) \coloneq \norm{B(x)^{-1}}_{\text{op}}$ are continuous.
  Now $\gamma([a,b])$ is compact, whence $f,F$ are bounded on this set, say $f \leq r$ and $F \leq R$ (with $r,R>0$ constant).
  By continuity $W_\gamma \coloneq f^{-1} (]0,2r[) \cap F^{-1} (]0,2R[)$ is an open neighborhood of $\gamma ([a,b])$ in $U$. 
  By construction, $k\coloneq \frac{1}{2r}$ and $K \coloneq 2R$ satisfy the estimates \eqref{eq: est:normcomp} on $W_\gamma$. 
 \end{proof}

 \begin{lem}\label{lem: pieces}
  Fix a local addition $\A \colon \Omega \rightarrow M$. Then there exists a smooth curve $f \colon [a,b] \rightarrow U$ with $f(a) = \gamma (a)$ and $f(b) = \gamma (b)$ such that $(f,\gamma)(t) \in (\pi_{TM} ,\A) (\Omega)$ for all $t \in I$.
  For $\varepsilon>0$ we can choose $f$ such that $\sup_{t \in [a,b]}\norm{(\pi_{TM} ,\A)^{-1} (f,\gamma)(t)}_{G , f(t)} < \varepsilon$. 
 \end{lem}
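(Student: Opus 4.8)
The plan is to push everything into the chart $\kappa$: set $c \coloneq \kappa \circ \gamma \in \AC^p([a,b],V) \subseteq \AC^p([a,b],H)$, produce a smooth curve $\widetilde f \colon [a,b] \to V$ which approximates $c$ uniformly and already matches $c$ at the endpoints, and then take $f \coloneq \kappa^{-1} \circ \widetilde f$. The point is that $(\pi_{TM},\A)^{-1}$ is defined and smooth on the open neighbourhood $(\pi_{TM},\A)(\Omega)$ of the diagonal of $M\times M$ and sends $(x,x)$ to $0_x$ (since $\A(0_x)=x$), which has $G$-norm zero; so a sufficiently good uniform approximation keeps, for each $t$, the pair $(f(t),\gamma(t))$ inside $(\pi_{TM},\A)(\Omega)$ and makes $\norm{(\pi_{TM},\A)^{-1}(f(t),\gamma(t))}_{G,f(t)}<\varepsilon$.

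First I would extract a uniform radius. Consider
\begin{displaymath}
 g(x,y) \coloneq \norm{(\pi_{TM},\A)^{-1}\bigl(\kappa^{-1}(x),\kappa^{-1}(y)\bigr)}_{G,\kappa^{-1}(x)},
\end{displaymath}
a continuous function defined on an open subset of $V\times V$ which contains $\{(v,v)\mid v\in c([a,b])\}$ and vanishes on the diagonal. For each $v_0\in c([a,b])$ pick $\delta_{v_0}>0$ so small that $B_{\delta_{v_0}}(v_0)\subseteq V$, that $(x,y)$ lies in the domain of $g$ whenever $\norm{x-v_0},\norm{y-v_0}<\delta_{v_0}$, and that $g(x,y)<\varepsilon$ there. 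By compactness of $c([a,b])$ finitely many balls $B_{\delta_{v_i}/2}(v_i)$ cover it; put $\delta\coloneq \min_i \delta_{v_i}/2$. Then any curve $\widetilde f\colon[a,b]\to H$ with $\norm{\widetilde f-c}_\infty<\delta$ has, for every $t$, an index $i$ with $c(t)\in B_{\delta_{v_i}/2}(v_i)$, hence $\widetilde f(t),c(t)\in B_{\delta_{v_i}}(v_i)\subseteq V$ and $g(\widetilde f(t),c(t))<\varepsilon$.

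Second I would build $\widetilde f$. By Remark~\ref{rem:dense} there is a sequence of smooth curves converging to $c$ in $\AC^p$, and by Lemma~\ref{AC:Cinfty} the convergence is uniform; choose a smooth $g_0$ with $\norm{g_0-c}_\infty<\delta/3$. Setting
\begin{displaymath}
 \widetilde f(t)\coloneq g_0(t)+\tfrac{b-t}{b-a}\bigl(c(a)-g_0(a)\bigr)+\tfrac{t-a}{b-a}\bigl(c(b)-g_0(b)\bigr)
\end{displaymath}
gives a smooth curve with $\widetilde f(a)=c(a)$, $\widetilde f(b)=c(b)$, and, since an affine combination of two vectors of norm $\le\delta/3$ has norm $\le\delta/3$, $\norm{\widetilde f-g_0}_\infty\le\delta/3$, so $\norm{\widetilde f-c}_\infty<2\delta/3<\delta$. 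Now $f\coloneq\kappa^{-1}\circ\widetilde f\colon[a,b]\to U$ is smooth, $f(a)=\gamma(a)$, $f(b)=\gamma(b)$, and by the previous step $(f(t),\gamma(t))\in(\pi_{TM},\A)(\Omega)$ with $\norm{(\pi_{TM},\A)^{-1}(f(t),\gamma(t))}_{G,f(t)}<\varepsilon$ for all $t$. The first assertion of the lemma is the special case $\varepsilon = 1$ of the second.

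The only real work is the uniform-neighbourhood step: turning the pointwise continuity of $g$ near the diagonal into a single $\delta$ valid for all $t$, while keeping the approximating curve inside the chart domain $V$. This is a routine compactness argument — one could instead invoke Lemma~\ref{lem: normcomp} to pass to the ambient Hilbert norm, but direct use of the continuity of $(\pi_{TM},\A)^{-1}$ is shorter. The endpoint correction is elementary; the one thing to observe is the stability of affine interpolation, namely that an affine combination of two vectors of norm $\le r$ stays within norm $r$.
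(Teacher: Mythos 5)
Your argument is correct, and it reaches the same endpoint as the paper's proof by the same overall plan (work in the chart, approximate $\kappa\circ\gamma$ uniformly by a smooth curve with matching endpoints, use compactness of the image to make the estimate uniform in $t$), but the uniformity step is handled quite differently. The paper passes to the inverse $h$ of the chart representative of $(\pi_{TM},\A)$, shrinks its domain to a set on which $h$ is \emph{uniformly Lipschitz} (citing Lang), invokes Wallace's theorem to fit a uniform product neighbourhood of the compact diagonal piece into the domain, and then translates the resulting chart-norm bound into a $G$-norm bound via Lemma \ref{lem: normcomp}. You instead observe that the single continuous function $g(x,y)=\norm{(\pi_{TM},\A)^{-1}(\kappa^{-1}(x),\kappa^{-1}(y))}_{G,\kappa^{-1}(x)}$ vanishes on the diagonal and extract a uniform $\delta$ by a finite-subcover (Lebesgue-number) argument; this is purely qualitative, needs no Lipschitz estimates, no Wallace theorem, and bypasses Lemma \ref{lem: normcomp} entirely because you measure the Riemannian norm directly rather than through the chart norm. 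Your explicit affine endpoint correction, with the observation that a convex combination of two vectors of norm at most $\delta/3$ has norm at most $\delta/3$, is also cleaner than the paper's appeal to cut-off functions and convexity of balls. What the paper's quantitative route buys is reusable Lipschitz control on $h$ of the kind it needs again in Lemma \ref{lem: sandwich}; for the present lemma alone your softer argument suffices and is shorter.
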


 \begin{proof}
  Set $\Omega_U \coloneq TU \cap \A^{-1} (U)$ and consider $\A_{U} \colon V \times H \supseteq T\kappa (\Omega_U) \rightarrow H , \ (x,y) \mapsto \kappa \circ \A \circ T_x\kappa^{-1}(y)$, i.e.\ the representative of the local addition in the chart $\kappa$.
  
  By construction $(\pr_1,\A_U) \colon T\kappa (\Omega_U) \rightarrow V\times V$ induces a diffeomorphism onto a neighborhood $W$ of the diagonal. 
  Denote by $h \colon V \times V \supseteq W \rightarrow h(W) \subseteq T\kappa (\Omega)$ its inverse.  
  Consider the compact set $K \coloneq \kappa \circ\gamma([a,b]) \subseteq V$. 
  Then $\Delta (K) \coloneq \{(\gamma(t),\gamma(t)) \mid t\in [a,b]\}$ is a compact subset of $W$ and $h(\Delta (K) = K \times \{0\}$ is also compact. 
  Applying \cite[Chapter II, Proposition 6.3 and 6.4]{MR1071170} to $h$ we can shrink $W$ to a neighborhood of the compact set $\Delta (K)$ such that $h$ is uniformly Lipschitz continuous on $W$ and $W \subseteq W_\gamma$ for $W_\gamma$ as in Lemma \ref{lem: normcomp}.  
  
  An application of Wallace Theorem \cite[Theorem 3.2.10.]{MR1039321} shows that there are $\varepsilon_1, \varepsilon_2 >0$ such that for $K + B_{\varepsilon_1}(0) \coloneq \{v\in V \mid \exists k \in K \norm{v-k}<\varepsilon_1\}$
  we have $(K+V_{\varepsilon_1} (0)) \times B_{\varepsilon_2} (0) \subseteq h(W)$. 
  Since $h$ is uniformly Lipschitz continuous, we see that for every $\varepsilon_2 > r >0$ there is $\delta>0$ with the following property for all $x,y \in K + V_{\varepsilon_1} (0)$ with $\norm{x-y}<\delta$ we have $h(x,y) \in \{x\} \times B_{r} (0)$.
  Conversely, this implies $\{x\} \times B_\delta (x) \subseteq h^{-1} (\{x\} \times B_{r} (0))$.
  
  Now $B_\delta^\infty (\kappa \circ \gamma) \coloneq \{g \in \AC^p ([a,b],V) \mid \sup_{a \leq t \leq b}\norm{g(t)-\gamma(t)}\}$ is an open set in $\AC^p([a,b],V)$.
  Since smooth mappings are dense in $\AC^p([a,b],V)$ we find a smooth map $g$ in $B_\delta^\infty (\kappa \circ \gamma)$. 
  Using convexity of $B_\delta (\kappa \circ \gamma(a)$ and $B_\delta (\kappa \circ \gamma(b))$ and cut-off functions, we may clearly choose $g$ such that $g(a)=\kappa \circ \gamma(a)$ and $g(b)=\kappa \circ \gamma(b)$.  
  By construction we have for each $t \in [a,b]$ the relation 
  \begin{equation}\label{loc: locadd:eq}
  (g(t),\kappa \circ \gamma(t)) \in f(t) \times B_{\delta} (g(t)) \subseteq h^{-1}(g(t),B_r (0))
  \end{equation}
  Recall now that $h^{-1}$ is the local representative of the local addition. Hence \eqref{loc: locadd:eq} implies that $f \coloneq \kappa^{-1} \circ g$ satisfies $(f,\gamma)(t) \in (\pi_{TM} ,\A) (\Omega)$.
  
  To see that also the additional condition can be satisfied fix $\varepsilon>0$.
  Since $W \subseteq W_\gamma$, we use the constants from Lemma \ref{lem: normcomp} to choose $r >0$ so small that $T\kappa^{-1} (W \times B_r (0)) \subseteq \bigcup_{x \in \kappa^{-1} (W)} B_\varepsilon^{G_x} (0)$. 
  Then the construction of $f$ proves that $\norm{(\pi_{TM} ,\A)^{-1} (f,\gamma)(t)}_{G_{f(t)}} < \varepsilon$.
 \end{proof}

  \begin{lem}\label{lem: sandwich}
  	Fix a local addition $\A \colon TM \supseteq \Omega \rightarrow M$ and a manifold chart $\varphi \colon Y \rightarrow Z$ of $M$. Let $C>0$ be constant and $f \colon [a,b] \rightarrow Y \subseteq M$ be absolutely continuous. Assume that $(f,\gamma) \subseteq (\pi_{TM},\A)(\Omega)$. Then there are constants $r,s>0$ such that $B_r^\infty ((\pi_{TM},\A)^{-1}(f,\gamma)) \subseteq \AC^p_f ([a,b],\Omega)$ and for all $t\in [a,b]$ we have 
  	\begin{align*}
  	\kappa^{-1} (B_s (\kappa \circ \gamma (t))) \subseteq  \A (B_r ((\pi_{TM},\A)^{-1}(f,\gamma)(t))
  	\subseteq \kappa^{-1} (B_C (\kappa \circ \gamma (t))).
  	\end{align*}
  \end{lem}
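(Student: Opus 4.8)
The plan is to work throughout with the absolutely continuous section $\xi \coloneq (\pi_{TM},\A)^{-1}\circ(f,\gamma)$ over $f$; by property~(1) of Definition \ref{def: local addition} and \ref{setup: charts} this is well defined, lies in $\AC^p_f([a,b],TM)$, and satisfies $\pi_{TM}\circ\xi = f$ and $\A\circ\xi=\gamma$. Since $\kappa\circ\gamma$ occurs in the assertion, $\gamma$ --- like $f$ --- takes values in the chart domain $Y$ of $\kappa$, so (as in the proof of Lemma \ref{lem: pieces}) we may pass to the chart representative $\Psi\coloneq\kappa\circ\A\circ(T\kappa)^{-1}$ of the local addition, which is smooth on the open set $T\kappa(\Omega_Y)\subseteq Z\times H$, where $\Omega_Y\coloneq TY\cap\A^{-1}(Y)$; note $\xi([a,b])\subseteq\Omega_Y$, and $(\pr_1,\Psi)$ is a diffeomorphism of $T\kappa(\Omega_Y)$ onto an open neighbourhood $W$ of the diagonal in $Z\times Z$ with smooth inverse $h$. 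Applying Lemma \ref{lem: normcomp} to $f$ in the chart $\kappa$ we obtain constants $k,K>0$ with $k\norm{T_x\kappa(v)}\le\norm{v}_{G,x}\le K\norm{T_x\kappa(v)}$ for $x$ near $f([a,b])$ and $v\in T_xM$. All three assertions will be extracted from compactness of $\xi([a,b])$ (respectively of $(f,\gamma)([a,b])$) together with these estimates.

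\emph{The inclusion $B^\infty_r(\xi)\subseteq\AC^p_f([a,b],\Omega)$.} The image $T\kappa(\xi([a,b]))$ is a compact subset of the open set $T\kappa(\Omega_Y)$ of the Banach space $H\times H$, hence has positive distance $\rho$ from the complement. Writing $T\kappa(v)=(\kappa(f(t)),T_{f(t)}\kappa(v))$ for $v\in T_{f(t)}M$, the lower half of Lemma \ref{lem: normcomp} gives $\norm{T\kappa(v)-T\kappa(\xi(t))}\le k^{-1}\norm{v-\xi(t)}_{G,f(t)}$, so every $v$ with $\norm{v-\xi(t)}_{G,f(t)}<k\rho/2$ satisfies $T\kappa(v)\in T\kappa(\Omega_Y)$ and hence $v\in\Omega_Y\subseteq\Omega$. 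Thus $r_1\coloneq k\rho/2$ works: a section over $f$ within $\norm{\cdot}_{\infty,f}$-distance $r_1$ of $\xi$ takes all its values in $\Omega$, and by \ref{top:sectsp} this is exactly what it means to lie in $\AC^p_f([a,b],\Omega)$. In particular $B_r^{G_{f(t)}}(\xi(t))\subseteq\Omega_Y$ for all $t$ whenever $r\le r_1$, so $\A(B_r(\xi(t)))$ is defined.

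\emph{The sandwich.} Fix, once and for all, a $\delta_0$-neighbourhood of the compact set $T\kappa(\xi([a,b]))$ on which $\Psi$ is $L$-Lipschitz, and a $\delta_0'$-neighbourhood of the compact set $\{(\kappa(f(t)),\kappa(\gamma(t)))\mid t\in[a,b]\}\subseteq W$ on which $h$ is $L'$-Lipschitz; these are obtained by the same tube argument used in the proof of Lemma \ref{lem: pieces} (which invokes \cite{MR1071170}). Now pick any $r$ with $r\le\min\{r_1,k\delta_0\}$ and $r<kC/L$. For $v\in B_r^{G_{f(t)}}(\xi(t))$ one has $\norm{T\kappa(v)-T\kappa(\xi(t))}<r/k\le\delta_0$, hence $\norm{\kappa(\A(v))-\kappa(\gamma(t))}=\norm{\Psi(T\kappa(v))-\Psi(T\kappa(\xi(t)))}\le Lr/k<C$; this is the right-hand inclusion $\A(B_r(\xi(t)))\subseteq\kappa^{-1}(B_C(\kappa\circ\gamma(t)))$. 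With $r$ fixed, choose $s>0$ with $s\le\delta_0'$, $s<r/(KL')$, and $s$ so small that $B_s(\kappa\circ\gamma(t))\subseteq Z$ for every $t$ (possible since $\kappa\circ\gamma([a,b])$ is compact in the open set $Z$). For $z\in B_s(\kappa\circ\gamma(t))$ put $(\kappa(f(t)),w)\coloneq h(\kappa(f(t)),z)$ and $v\coloneq(T_{f(t)}\kappa)^{-1}(w)$; then $\A(v)=\kappa^{-1}(z)$ and $\norm{w-T_{f(t)}\kappa(\xi(t))}\le L'\norm{z-\kappa(\gamma(t))}<L's$, whence by the upper half of Lemma \ref{lem: normcomp} $\norm{v-\xi(t)}_{G,f(t)}\le K\norm{w-T_{f(t)}\kappa(\xi(t))}<KL's<r$. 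Therefore $\kappa^{-1}(z)\in\A(B_r(\xi(t)))$, which is the left-hand inclusion.

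The main obstacle is obtaining all of this \emph{uniformly in} $t\in[a,b]$: because $f$ and $\gamma$ are merely absolutely continuous there is no smooth trivialisation of $f^*TM$ available, so one cannot route the argument through the $\Omega$-lemma machinery of Proposition \ref{prop: ACOmega:Rep}; every constant has to be wrung out of the compactness of $\xi([a,b])$ and of $(f,\gamma)([a,b])$ by passing to finitely many chart pieces and taking the worst of the resulting Lipschitz and distance constants --- precisely the point at which the classical finite-dimensional proof must be reworked.
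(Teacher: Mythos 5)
Your proof is correct and follows essentially the same route as the paper's: pass to the chart representative of the local addition, extract uniform bilipschitz constants near the compact images of $(f,\gamma)$ and of the section $(\pi_{TM},\A)^{-1}(f,\gamma)$ via compactness and \cite{MR1071170}, and convert between the Riemannian and chart norms with Lemma \ref{lem: normcomp}. The only (harmless) deviation is that you read the statement as using a single chart for both $f$ and $\gamma$, whereas the paper keeps two charts, $\varphi$ for $f$ and $\kappa$ for $\gamma$; the argument is unchanged if one takes $\Psi \coloneq \kappa\circ\A\circ(T\varphi)^{-1}$.
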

  
  \begin{proof}
  	Shrinking the chart $Y$, we may assume $Y \subseteq W_f$ where $W_f$ is the set constructed in Lemma \ref{lem: normcomp}. Thus Lemma \ref{lem: normcomp} allows us to argue in local charts and replace the norm of the Riemannian metric by the norm of $H$. 
  	
  	In the following we suppress the charts and assume that $f \colon [a,b] \rightarrow Z \subseteq H$ and $\gamma \colon [a,b] \rightarrow V \subseteq H$ are absolutely continuous curves.
  	Then the local addition corresponds to a map 
  	$\psi \colon Z \times H \supseteq \Omega_\psi \rightarrow V$ such that $(\pr_1,\psi)$ induces a diffeomorphism onto a neighborhood of $\{(f(t),\gamma(t)), t\in I\}$. 
  	Denote by $f_\gamma \colon [a,b] \rightarrow E$ the unique curve with $\psi \circ (f, f_\gamma) = \gamma$
  	
  	Now $(f,f_\gamma)([a,b])$ and $(f,\gamma)$ are compact sets which are mapped by the diffeomorphism $(\pr_1,\psi)$ to each other. Hence we can apply \cite[Chapter II, Proposition 6.3 and 6.4]{MR1071170} twice to shrink $\Omega_\psi$ to a neighborhood of $(f,f_\gamma)([a,b])$ on which $(\pr_1,\psi)$ is a uniformly bilipschitz continuous map.
  	
  	Since $(f,f_\gamma)([a,b]) \subseteq \Omega$ is compact, there is $R >0$ with 
  	$$ \bigcup_{t \in [a,b]} \{f(t)\} \times B_R (f_\gamma (t)) \subseteq \Omega_\psi$$
  	Using uniform Lipschitz continuity, we can shrink $R>0$ such that 
  	$$(\pr_1,\psi)(\{f(t)\} \times B_R (f_\gamma (t))) \subseteq \{f(t)\} \times B_C (\gamma(t)) \quad \forall t \in [a,b]$$ 
  	Using the norm equivalence of the Riemanian metric with the Banach space norm, we can clearly choose $r>0$ with the desired properties, since $T\kappa^{-1} (\Omega_\psi) \subseteq \Omega$.
  	
  	Observe that once we fix $r>0$, using again the equivalence of norms, there is $L>0$ such that $\{f(t)\} \times B_L (f_\gamma (t)) \subseteq T\kappa(B_r^G (T\kappa^{-1}(f(t), f_\gamma (t))))$ for all $t \in [a,b]$. Using now Lipschitz continuity of $(\pr_1, \psi)^{-1}$, we can choose $s>0$ such that for all $t \in [a,b]$ we have $\{f(t)\} \times B_s (\gamma (t)) \subseteq (\pr_1, \psi) ( \{f(t)\} \times B_L (f_\gamma (t)))$. This $s$ satisfies the assertion of the Lemma. 
  \end{proof}

 \end{appendix}
\addcontentsline{toc}{section}{References}
\bibliographystyle{new}
\bibliography{SRVT_abscont}

\end{document}